\def\R{{\mathbb R}}
\def\Z{{\mathbb Z}}
\def\C{{\mathbb C}}
\def\DD{{\mathbb D}}
\def\H{{\mathbb H}}
\def\C{{\mathbb C}}
\def\H{{\mathbb H}}
\def\cH{{\mathcal H}}
\def\cD{{\mathcal D}}
\def\cB{{\mathcal B}}
\def\cU{{\mathcal U}}
\def\cW{{\mathcal W}}
\def\cA{{\mathcal A}}
\def\cR{{\mathcal R}}
\def\cF{{\mathcal F}}
\def\gg{{\mathfrak g}}
\def\gge{{\mathfrak e}}
\def\gn{\mathfrak n}
\def\gp{{\mathfrak p}}
\def\gk{{\mathfrak k}}
\def\ga{{\mathfrak a}}
\def\gm{{\mathfrak m}}
\def\gz{{\mathfrak z}}
\def\gu{{\mathfrak u}}
\def\Pdr{{\rm P(n,{\mathbb R})}}
\def\Ptwor{{\rm P(2,{\mathbb R})}}
\def\sldr{{\mathfrak {sl}}(d,\R)}
\def\gldr{{\mathfrak {gl}}(d,\R)}
\def\sodr{{\mathfrak {so}}(d,\R)}
\def\Symdr{{\rm Sym}(d)}
\def\Gldr{{\rm GL}(d,\R)}
\def\Sltwor{{\rm SL}(2,\R)}
\def\Sldr{{\rm SL}(d,\R)}
\def\Suone{{\rm SU}(1,1)}
\def\Sodr{{\rm SO}(d,\R)}
\def\Son{{\rm SO}(n)}
\def\Sod{{\rm SO}(d)}
\def\Sotwo{{\rm SO}(2)}
\def\Sodone{{\rm SO}(d,1)}
\def\Pdr{{\rm P}(d,\R)}
\def\Ptwor{{\rm P}(2,\R)}
\def\SPdr{{\rm SP}(d,\R)}
\def\Ad{\mathop{\rm Ad}}
\def\ad{\mathop{\rm ad}}
\def\Exp{\mathop{\rm Exp}}
\def\tr{\mathop{\rm tr}}
\def\id{\mathop{\rm id}}
\def\diag{\mathop{\rm diag}}
\def\E{\mathop{\rm e}}
\newcommand{\Le}{L}
\newcommand{\vertiii}[1]{{\left\vert\kern-0.25ex\left\vert\kern-0.25ex\left\vert #1 
    \right\vert\kern-0.25ex\right\vert\kern-0.25ex\right\vert}}
\def\t{\!\;^t\!}   
\newcommand{\de}{\mathrm{d}}
\begin{document}

\title*{Unitarization of the Horocyclic Radon Transform on Symmetric Spaces}
\author{Francesca Bartolucci, Filippo De Mari and Matteo Monti}
\institute{Francesca Bartolucci \at Seminar for Applied Mathematics, ETH Zurich, Raemistrasse 101, 8092 Zurich, Switzerland, \email{francesca.bartolucci@sam.math.ethz.ch}
\and Filippo De Mari \at Department of Mathematics \& MaLGa Center,
   University of Genoa, Via Dodecaneso 35, 16146 Genova, Italy, \email{demari@dima.unige.it}
   \and Matteo Monti \at Department of Mathematics \& MaLGa Center,
   University of Genoa, Via Dodecaneso 35, 16146 Genova, Italy, \email{m.monti@dima.unige.it}}
%
%
\maketitle

\abstract*{Each chapter should be preceded by an abstract (10--15 lines long) that summarizes the content. The abstract will appear \textit{online} at \url{www.SpringerLink.com} and be available with unrestricted access. This allows unregistered users to read the abstract as a teaser for the complete chapter. As a general rule the abstracts will not appear in the printed version of your book unless it is the style of your particular book or that of the series to which your book belongs.
Please use the 'starred' version of the new Springer \texttt{abstract} command for typesetting the text of the online abstracts (cf. source file of this chapter template \texttt{abstract}) and include them with the source files of your manuscript. Use the plain \texttt{abstract} command if the abstract is also to appear in the printed version of the book.}

\tableofcontents

\section{Introduction}

The Radon transform has its origin in the problem of recovering a function defined on $\R^d$ from its integrals over hyperplanes. In 1917 Radon proved the reconstruction formula for two and three-dimensional signals. In $\R^3$ it reads 
\begin{equation}\label{jradon1917}
f(x)=-\frac{1}{8\pi^2}\Delta\int_{S^2}\cR f(\theta,x\cdot\theta)\de\theta,
\end{equation}
where $\Delta$ is the Laplacian acting on the variable $x$, $S^2$ is the sphere in $\R^3$ and for every $\theta\in S^2$ and $t\in\R$ we denote with $\mathcal{R}f(\theta,t)$ the integral of $f$ over the hyperplane $x\cdot\theta=t$. Formula \eqref{jradon1917} suggests to define two dual transforms $f\mapsto\cR f$, $g\mapsto\cR^{\#} g$, known as Radon transform and dual Radon transform, or back-projection, respectively. The Radon transform $\cR$ maps a function on $\R^d$ into the set of integrals over all hyperplanes, while the dual Radon transform $\cR^{\#}$ maps a function defined on the set of hyperplanes of $\R^d$ into its integrals over the sheaves of hyperplanes through a point. Formula \eqref{jradon1917} can be rewritten
\begin{equation*}
f=-\frac{1}{2}\Delta\cR^{\#}\cR f
\end{equation*}
and solves the inverse problem of recovering $f$ from the measured datum $\cR f$.

This classical inverse problem is a particular case of the more general issue of recovering an unknown function on a manifold by means of its integrals over a family of submanifolds, already investigated by Gelfand in the 1950's~\cite{gel1}. A natural framework for such general inverse problems was considered by Helgason \cite{gass} and is motivated by the group structure hidden in the polar Radon transform setting \cite{helgason99}, whereby the signals to be analyzed are in $\R^d$.

In the planar case, $\R^2$ and $[0,2\pi)\times\R$,  which parametrizes the set of lines in the plane by polar coordinates, are both transitive spaces of the rigid motions' group. This is $G=\R^2\rtimes K$, with $K=\{R_\phi:\phi\in [0,2\pi)\}$ where \[
R_{\phi}=\left[\begin{matrix}\cos{\phi} & -\sin{\phi} \\ \sin{\phi} & \cos{\phi}\end{matrix}\right].
\]
We write $(b,\phi)\in\R^2\times [0,2\pi)$ for the elements in $G$ and the group law is
\[
(b,\phi)(b',\phi')=(b+R_{\phi}b', \phi+\phi'\ \text{mod}\ 2\pi).
\]
  The group $G$ acts transitively on $\R^2$ by the action 
\[
(b,\phi)[x]=R_{\phi}x+b
\]
and the isotropy at the origin $x_0=(0,0)$ is the Abelian subgroup
\[
K\simeq\{(0,\phi):\phi\in[0,2\pi)\}.
\]
Therefore $\R^2\simeq G/K$ under the canonical isomorphism $gK\mapsto g[x_0]$. The group $G$ is a group of affine transformations of the plane and maps lines into lines. A line in the plane is parametrized by the direction $\t n(\theta)=(\cos(\theta),\sin(\theta))$, where $\theta\in [0,2\pi)$, of its normal and by the coordinate $t$ on the oriented normal line\footnote{ The orientation is such that the coordinate $t$ is $1$ exactly at $(\cos\theta,\sin\theta)$.} which describes its intersection with the given line.  The action of $G$  is then given by 
\[
(b,\phi).(\theta,t)=(\theta+\phi\ \text{mod}\ 2\pi,t+n(\theta)\cdot R_{\phi}^{-1}b)
\]
and is easily seen to be transitive. The isotropy at the $y$-axis $\xi_0=(0,0)\in [0,2\pi)\times\R$ is 
\[
H=\{((0,b_2),\phi):b_2\in\R,\phi\in\{0,\pi\}\}.
\]
Thus, $[0,2\pi)\times\R\simeq G/H$ under the canonical isomorphism $gH\mapsto g.\xi_0$. 
From this group-theoretic point of view, the fact that a point $x\in\R^2$ belongs to the line $(\theta,t)\in[0,2\pi)\times\R$  is equivalent to requiring that the left cosets $x=g_1K$ and $(\theta,t)=g_2H$ intersect. Indeed, $g_1[x_0]$ belongs to the line $g_2.\xi_0$ if and only if there exists $h\in H$ such that $g_1[x_0]=g_2h[x_0]$, so that $g_1(g_2h)^{-1}\in K$ and $g_1K\cap g_2H\ne\emptyset$. This structure illustrates the following general framework introduced by Helgason. 

Consider two $G$-spaces $X$ and $\Xi$, where the actions on $x\in X$ and $\xi\in\Xi$ are
  \[(g,x) \mapsto g[x],\quad  (g,\xi) \mapsto g.\,\xi. \]
Both $X$ and $\Xi$ are assumed to be  transitive spaces,  so that
there exist quasi-invariant measures ${\rm d}x$ and ${\rm d}\xi$.
In Helgason's approach, it is assumed that ${\rm d}x$ and ${\rm d}\xi$ are invariant measures.
Fix $x_0\in X$ and $\xi_0\in\Xi$ and denote by $K$ and $H$ the corresponding stability subgroups, so that $X\simeq G/K$ and $\Xi\simeq G/H$ under the isomorphisms $gK\mapsto g[x_0]$ and $gH\mapsto g.\xi_0$, respectively. The space $X$ is meant to describe the ambient in which the functions to be analysed live, for example the Euclidean plane, or the sphere $S^2$ or the hyperbolic plane $H^2$. The second space $\Xi$   parametrises the set of submanifolds of $X$ over which one wants to integrate functions, for instance lines in the Euclidean plane, great circles in $S^2$, geodesics or horocycles in $H^2$. Motivated by the group structure behind the polar Radon transform, the elements in $\Xi$ can be realized as submanifolds of $X$ introducing the concept of incidence. Two elements $x=g_1K$ and $\xi=g_2H$ are said to be incident if they intersect as cosets in $G$. The concept of incidence translates the fact that a point $x\in X$ belongs to the submanifold parametrized by $\xi\in\Xi$. Any point $\xi\in\Xi$ is realized as a submanifold $\widehat{\xi}\subset X$ by taking all the points $x\in X$ that are incident to $\xi$. Precisely,
\begin{equation}\label{xihat}
\widehat{\xi}=\{x\in X: x\ \text{and}\ \xi\ \text{are incident}\}	\subset X.
\end{equation}
Conversely, one builds the ``sheaf'' of manifolds $\check{x}$ through the point $x\in X$ by taking all the points $\xi\in \Xi$ that are incident to $x$
\begin{equation}\label{xcheck}
\check{x}=\{\xi\in \Xi: \xi\ \text{and}\ x\ \text{are incident}\}\subset \Xi.
\end{equation}
By \eqref{xihat} and \eqref{xcheck} we have that
$$
\widehat{\xi}_0=H[x_0]\subset X,\qquad \check{x}_0=K.\xi_0\subset\Xi.
$$
Both $\check{x}_0$ and $\widehat{\xi}_0$ are transitive spaces and  hence carry quasi-invariant measures. By definition, for any $x=gK$ and $\xi=\gamma H$
\[
\check{x}=g.\check{x}_0\subset\Xi,\qquad \widehat{\xi}=\gamma[\widehat{\xi}_0]\subset X,
\]
which are closed subsets by Lemma 1.1 in \cite{helgason99}. If the maps $\xi\mapsto\widehat{\xi}$ and $x\mapsto\check{x}$ are both injective, then the pair of homogeneous spaces $(X,\Xi)$ is called a dual pair. This assumption is called transversality, see Lemma 1.3 in \cite{helgason99} for an equivalent characterization. The transversality condition avoids
a redundant parametrisation of the submanifolds of $X$. The reader may consult \cite{helgason99} for numerous examples of dual pairs. It is worth observing that the leading example of the polar Radon transform does not satisfy the transversality condition. Indeed, the points $(\theta,t)$ and $(\theta+\pi\ \text{mod}\ 2\pi,-t)$ in $[0,2\pi)\times\R$ both parametrise the line given by the set of points 
\[
\widehat{(\theta,t)}=\widehat{(\theta+\pi\ \text{mod}\ 2\pi,-t)}=\{x\in\R^2: x\cdot n(\theta)=t\}.
\] 
For a deeper study on the injectivity issue, the reader may consider \cite{abddchapter}.

In Helgason's approach the transitive spaces $\check{x}_0$ and $\widehat{\xi}_0$ are supposed to carry $K$-invariant and $H$-invariant measures, respectively, that is 
\[
\int_{\check{x}_0}g(k^{-1}.\xi)\de\mu_0(\xi)=\int_{\check{x}_0}g(\xi)\de\mu_0(\xi),\qquad g\in L^1(\check{x}_0,\de\mu_0),\, k\in K,
\]
\[
\int_{\widehat{\xi}_0}f(h^{-1}[x])\de m_0(x)=\int_{\widehat{\xi}_0}f(x)\de m_0(x),\qquad g\in L^1(\widehat{\xi}_0,\de m_0),\, h\in H.
\]
In order to define the Radon transform and its dual, one needs to introduce measures on $\widehat{\xi}$ and $\check{x}$. This  may be done taking the pushforward of the measure $\de \mu_0$ to $\widehat{\xi}=(gH)^{\hat{}}$ by the map $\widehat{\xi}_0\ni x\mapsto g[x]\in\widehat{\xi}$ and of the measure $\de m_0$ to $\check{x}=(gK)^{\check{}}$ by the map $\check{x}_0\ni\xi\mapsto g.\xi\in\check{x}$, respectively. We denote by $\de\mu_x$ the measure on $\check{x}$ and by $\de m_\xi$ the measure on $\widehat{\xi}$. Since the measures on $\widehat{\xi}_0$ and $\check{x}_0$ are invariant , the measures $\de m_\xi$ and $\de\mu_x$ do not depend on the choice of the representatives of $\xi$ and $x$ and the transversality condition guarantees that they are unique. \begin{definition}

The Radon transform of $f$ is the map $\cR f:\Xi\to\C$ given by
\[
\cR f(\xi)=\int_{\widehat{\xi}}f(x)\de m_\xi(x),
\]
and the dual Radon transform of $g$ is the map  $\cR^{\#} g:X\to\C$ given by 
\[
\cR^{\#} g(x)=\int_{\check{x}}g(\xi)\de\mu_x(\xi),
\]
for any $f$ and $g$ for which the integrals converge. 
\end{definition}
Observe that, even if the transversality condition is not satisfied for the polar Radon transform, both $\widehat{(\theta,t)}$ and $\widehat{(\theta+\pi\ \text{mod}\ 2\pi,-t)}$ are endowed with the same measure since the arc-length measure is invariant under translations and rotations. For this reason the polar Radon transform satisfies
\[
\cR^{\rm pol} f(\theta,t)=\cR^{\rm pol} f(\theta+\pi\ \text{mod}\ 2\pi,-t).
\]
In this context, the most relevant issue is  to recover $f$ from the values of $\mathcal{R}f$. Another central issue is to prove that the Radon transform, up
to a composition with a suitable pseudo-differential operator, can be
extended to a unitary map $\mathcal{Q}$ from $L^2(X,{\rm d}x)$ to
$L^2(\Xi,{\rm d}\xi)$ intertwining the quasi-regular representations $\pi$ and $\hat\pi$ of $G$  acting on $L^2(X,{\rm d}x)$ and $L^2(\Xi,{\rm d}\xi)$, respectively.

In \cite{acha}, the authors obtain  both an intertwining and a unitarization result for the affine Radon transform. The techniques used in \cite{acha} mimic the approach followed by Helgason to unitarize the polar Radon transform \cite{helgason99}. 

Later, inspired by the results in \cite{acha} a new approach based on representation theory has been taken in order to treat in a general and unified way the problem of unitarizing and inverting the Radon transform \cite{abdd} under the assumption that $\pi$ and $\hat\pi$ are irreducible. The approach taken in \cite{abdd}, \cite{acha} differs from Helgason's  since the assumptions on the measures carried by  $X$ and $\Xi$ and by the submanifolds $\hat\xi\subset X$ are weaker, namely their relative
invariance instead of (proper) invariance. This allows to consider a wider variety of cases of interest in applications, such as the similitude group studied by Murenzi \cite{anmu96}, and the generalized shearlet dilation groups introduced by F\"uhr in \cite{fu98}, \cite{futo16} for the purpose of generalizing the standard shearlet group introduced in
\cite{lawakuwe05}, \cite{dastte10}. It is assumed that there exists a non-trivial $\pi$-invariant subspace $\cA$ of $L^2(X,\de x)$ such that $\mathcal{R}$ is well defined for all $f\in\cA$ and the adjoint of the operator $\mathcal{R}\colon\cA\to L^2(\Xi,\de\xi)$ has non-trivial domain. Then, it is proved that the Radon transform $\mathcal{R}$ is a closable operator from $\cA$ into $L^2(\Xi,{\rm d}\xi)$ and that its closure $\overline{\mathcal{R}}$ is independent of the choice of $\cA$ and is the unique closed extension of $\mathcal{R}$. The main result states that if the quasi regular representations $\pi$ of $G$ on $L^2(X,\de x)$ and $\hat\pi$ of $G$ on $L^2(\Xi,\de\xi)$ are irreducible, then the Radon transform $\mathcal{R}$, up
to a composition with a suitable pseudo-differential operator, can be
extended to a unitary operator $\mathcal{Q}:L^2(X,{\rm d}x)\to L^2(\Xi,{\rm d}\xi)$ which intertwines them, namely
\[
\hat\pi(g)\mathcal{Q}\pi(g)^{-1}=\mathcal{Q},\qquad g\in G.
\]
The proof is based on the extension of Schur's lemma due to Duflo and Moore \cite{dumo76}. 

{A direct consequence of the result above is studied in \cite{abdd}. Adding the hypothesis of square-integrability of $\pi$, the authors derive a new general inversion formula for the Radon transform of the form
\[f=\int_G\chi(g)\langle \mathcal{R}f,\hat{\pi}(g)\Psi \rangle \pi(g)\psi\de g, 
\]
where $\chi$ is a character of $G$ and $\psi\in L^2(X, \de x)$ and $\Psi \in L^2(\Xi, \de\xi)$ are suitable mother wavelets and where  the Haar integral is weakly convergent. Such formula is obtained by the usual reconstruction formula for square-integrable representations and then by applying the unitary operator $\mathcal{Q}$ to both entries of the scalar product $\langle f,\pi(g)\psi\rangle$. We stress that the above formula allows to reconstruct an unknown signal by computing the family of coefficients $\{\langle \mathcal{R}f,\hat{\pi}(g)\Psi \rangle \}_{g\in G}$.

The results achieved in \cite{abdd} and \cite{acha} have posed many interesting mathematical challenges. 
A natural question is to investigate how to generalize these findings to other groups and related representations without the hypothesis of irreducibility, because the techniques used in~\cite{abdd} cannot be transferred directly.

In this direction, we have considered in \cite{bdm} the case of homogeneous trees. Precisely, we construct the unitarization of the horocyclic Radon transform on a homogeneous tree $X$ and we prove that it intertwines the quasi regular representations of the group of isometries of $X$ acting on the space of square-integrable functions on the tree itself and on the space of horocycles, respectively.
Since the quasi regular representation is not irreducible, we adopt a combination of the approach followed by Helgason in the context of symmetric spaces \cite{gass} and the techniques that have been developed in \cite{acha}. The main observation motivating  \cite{bdm}  is that homogeneous trees are the natural discrete counterpart of rank-one symmetric spaces. 

This article is devoted to investigate the unitarization problem in the case when $X$ is a symmetric space and $\Xi$ is the set of horocycles of $X$, which has at large been addressed by Helgason. A remarkable difference from the cases treated in \cite{abdd} is that   the quasi regular representations $\pi$ of the group of isometries of the symmetric space $X$ acting on $L^2(X)$ is not irreducible, nor is it the representation $\hat{\pi}$ on $L^2(\Xi)$. We are well aware that the unitarization problem was already addressed and essentially solved by Helgason in \cite{gass}.  Precisely, he constructs a pseudo-differential operator $\Lambda$ and he proves that the pre-composition with the horocyclic Radon transform yields an isometric operator, see Theorem 3.9 in Chap.~II in \cite{gass}. Here, we prove that the composition $\Lambda\mathcal{R}$ can actually be extended to a unitary operator $\mathcal{Q}:L^2(X, {\rm d}x)\to\Le_\flat^2(\Xi, {\rm d}\xi)$, where ${\rm d}x$ and ${\rm d}\xi$ are the $G$-invariant measures and where $\Le_\flat^2(\Xi, {\rm d}\xi)$ is a closed subspace of $\Le^2(\Xi, {\rm d}\xi)$ which accounts for the Weyl symmetries. Furthermore, we are able to show that $\mathcal{Q}$ intertwines the  quasi regular representations $\pi$ and $\hat{\pi}$.

This work is focused on the horocyclic Radon transform, but another interesting setting could be obtained by considering geodesics. Such Radon transform is commonly called X-ray transform and has been introduced and inverted  by Helgason on the hyperbolic space $\H^n$, see Theorem 3.12 in Chap.I in \cite{gass}, and on symmetric spaces of the noncompact type by Rouvi\`ere \cite{rouviere}. Although it is not in general true that a horocycle has codimension one in the symmetric space, the horocyclic Radon transform can be seen as the analogue of the Euclidean Radon transform on hyperplanes in $\R^n$, whereas the X-ray transform is the analogue of the Radon on lines in $\R^n$.

The primary reason of the present contribution was to settle the unitarization issue in the setup of noncompact symmetric spaces in all details, in a self-contained and accessible way to the readers that have little experience with the heavy machinery of semisimple groups. We do make use of the basic Lie theoretic notions but avoid as much as possible to make extensive use of the full body of the theory. Rather, we collect all the most relevant results of the theory that may serve as a map.
 
We are not aware of a general statement such as our Theorem~\ref{thm:unitarizationtheorem} in the literature, though it is quite clear to us
that the result comes as no surprise if not for the flexibility of our proof (see once again~\cite{acha}, \cite{bdm}). We also believe that the material presented here is a readable introduction to a subject that may attract the attention of a wide community of young researchers. 

The chapter is organized as it follows. In Sect.~ \ref{sec:prelim} we recall the basic facts of the analysis on semisimple Lie groups and we introduce the notation used throughout in the geometric analysis on noncompact Riemannian symmetric spaces. 
In Sect.~ \ref{sec:symsp} we present a brief overview of the general theory of symmetric spaces enriched with the examples of the Euclidean space, the sphere, the upper half plane, the unit disk and the positive definite symmetric matrices. 
Of particular interest for our purposes are $\S$~\ref{sec:boundary}, \ref{CRP} and \ref{sec:horo}. In $\S$~\ref{sec:boundary} we present the notion of boundary of a symmetric space and in $\S$~\ref{CRP} we show the infinitely many ways to represent it changing the reference point in the symmetric space. Finally, in $\S$~\ref{sec:horo} we define the family of horocycles and we prove some technical results needed in Sects.~\ref{sec:analysis} and \ref{sec:unit}. 
In Sect. \ref{sec:analysis} we collect the analytic ingredients that come into play. We endow the symmetric space, its boundary and the family of horocycles with invariant measures. We introduce the Helgason-Fourier transform and its main features. Then, we study the horocyclic Radon transform and we discuss its relation with the Helgason-Fourier transform.  
Finally, in Sect.~\ref{sec:unit} we prove the unitarization result for the horocyclic Radon transform.

\section{Preliminaries}\label{sec:prelim}
The purpose of the introductory section is to recall the basic facts of the analysis on semisimple Lie groups and to establish the notation used throughout in the geometric analysis on noncompact Riemannian symmetric spaces.
For a concise and effective exposition, see~\cite{gga}. Classical references with a wider scope are
\cite{dls}, \cite{gass} and \cite{knapp}. For a detailed introduction to differential geometry and Lie groups, we refer to~\cite{war}.



\vskip0.5truecm
A \emph{Lie algebra}\index{Lie algebra} $\gg$ is {\it simple}\index{simple!Lie algebra} if it is not Abelian and contains no proper Abelian ideals. A {\it semisimple}\index{semisimple!Lie algebra} Lie algebra is then the Lie algebra direct sum of (all) its simple ideals. Cartan proved that on every semisimple Lie algebra $\gg$ there exists a \emph{Cartan involution}~$\theta$\index{Cartan!involution}, namely an involution such that the symmetric bilinear form $B_\theta(X,Y)=-B(X,\theta Y)$ is positive definite, where $B$ is the usual Killing form defined by $B(X,Y)=\tr(\ad X\circ\ad Y)$. Such an involution gives rise to a \emph{Cartan decomposition}\index{Cartan!decomposition} of the Lie algebra, namely a vector space direct sum  $\gg=\gk+\gp$, where $\gk$ and $\gp$ are the 
$+1$ and  $-1$ eigenspaces of $\gg$ relative to $\theta$, respectively.

Fix a maximal Abelian subspace $\ga$ of $\gp$.  
The set $\{\ad H:H\in \ga\}$ is a commuting family of self-adjoint linear maps. Therefore,  $\gg$  is the $B_\theta$-orthogonal direct sum  of their joint eigenspaces, all of the eigenvalues of which are real and depend linearly on $H$. For any fixed $\alpha\in\ga^*$, the linear dual of $\ga$, we write
\[
\gg_\alpha=\{X\in\gg: (\ad H)X=\alpha(H)X
\hbox{ for all }H\in\ga\}
\]
and we say that  $\alpha\neq0$ is a  {\it restricted root}, or simply a {\it  root}\index{root} of the pair
$(\gg,\ga)$, whenever  $\gg_\alpha\not=\{0\}$. 
The set of restricted roots is $\Sigma$ and the spaces
$\gg_\alpha$ with $\alpha\in\Sigma$  are called {\it (restricted) root spaces}\index{space!root}. 

An element $H\in\ga$ is called {\it regular}\index{regular} if $\alpha(H)\neq0$ for all $\alpha\in\Sigma$, otherwise it is {\it singular}\index{singular}. The set $\ga'$ of regular elements is the complement in $\ga$ of finitely many hyperplanes and its connected components are called the {\it Weyl chambers}\index{chamber!Weyl}.

We fix a Weyl chamber $\ga^+\subset\ga$ and we declare a root $\alpha$ to be {\it positive}\index{positive!root} if it has positive values on $\ga^+$. A root is {\it simple}\index{simple!root} if it cannot be written as the sum of positive roots. The set $\Delta$ of simple roots turns out to be a basis of $\ga^*$. Thus, there are exactly $\ell=\dim\ga$ simple roots. This number is an important invariant and is called the {\it real rank}\index{real rank} of~$\gg$.
We order the elements in  $\ga^*$, hence the roots in $\Sigma$,  {\it lexicographically}
with respect to an ordering  $\delta_1,\dots,\delta_\ell$ of the simple roots. This means that $\lambda=\sum a_j\delta_j$ is positive (written $\lambda>0$) if the first non-zero coefficient $a_k$ is positive.
Together with $\gg$, $\theta$ and $\ga$ we assume that an ordering ``$>$'' has been fixed on $\ga^*$ by choosing a labeling of the simple roots relative to a fixed Weyl chamber $\ga^+$. We consequently denote by $\Sigma^+$ and $\Sigma^-$ the positive and negative roots, respectively. Clearly, $\Sigma=\Sigma^+\cup\Sigma^-$, a disjoint union. 

If $G$ is a \emph{Lie group}\index{Lie!group}, then it is said to be \emph{semisimple}\index{semisimple!group} if such is its Lie algebra. Furthermore, for any Cartan involution $\theta$ on its Lie algebra $\gg$ there exists an automorphism $\Theta$ of $G$
such that  $d\Theta=\theta$ and $\Theta^2=\id$. 
\begin{theorem}[The \emph{Iwasawa decomposition}\index{Iwasawa!decomposition}]\label{iwa}
	\label{ID}  Let $G$ be a connected semisimple Lie group, $\gg=\gk+\gp$ be a Cartan decomposition of its Lie algebra and fix a maximal Abelian subspace $\ga$ of $\gp$ and an ordering on $\ga^*$. The vector space direct sum 
	\begin{equation}
	\gn=\sum_{\alpha\in\Sigma^+}\gg_\alpha
	\label{iwaN}
	\end{equation}
	is a nilpotent Lie algebra and $\gg$ decomposes as the vector space direct sum
	\begin{equation*}
	\gg=\gk+\ga+\gn.
	\end{equation*}
	%
	Furthermore, let $K$, $A$ and $N$ be the connected subgroups of $G$ whose Lie algebras are $\gk$, $\ga$ and $\gn$, respectively. The multiplication map  $K\times A\times N\to G$ given by $(k,a,n)\mapsto
	kan$ is a diffeomorphism. The groups $A$ and $N$ are simply connected and $AN$ is solvable.
\end{theorem}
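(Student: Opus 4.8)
The plan is to prove the statement in three layers: the bracket relations among the root spaces, the vector space decomposition $\gg=\gk+\ga+\gn$, and finally the passage to the group.

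First I would record the bracket relation $[\gg_\alpha,\gg_\beta]\subseteq\gg_{\alpha+\beta}$, immediate from the Jacobi identity since $(\ad H)[X,Y]=(\alpha+\beta)(H)[X,Y]$ for $X\in\gg_\alpha$, $Y\in\gg_\beta$, $H\in\ga$. As a sum of positive roots is again positive and $\Sigma^+$ is finite, there is a maximal height among positive roots while each bracket strictly raises height; hence iterated brackets in $\gn=\sum_{\alpha\in\Sigma^+}\gg_\alpha$ of length beyond that maximum vanish. This shows at once that $\gn$ is a subalgebra and that it is nilpotent.

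For the vector space decomposition I would start from the root space decomposition $\gg=\gg_0\oplus\bigoplus_{\alpha\in\Sigma}\gg_\alpha$, with $\gg_0$ the centraliser of $\ga$. Setting $\gm=\gg_0\cap\gk$, maximality of $\ga$ as an abelian subspace of $\gp$ forces $\gg_0\cap\gp=\ga$, so $\gg_0=\ga\oplus\gm$. Using $\theta H=-H$ for $H\in\ga$ one checks $\theta(\gg_\alpha)=\gg_{-\alpha}$; hence any $Y\in\gg_{-\alpha}$ with $\alpha>0$ splits as $Y=(Y+\theta Y)-\theta Y$ with $Y+\theta Y\in\gk$ and $\theta Y\in\gg_\alpha\subseteq\gn$. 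Together with $\gm\subseteq\gk$ this gives $\gk+\ga+\gn=\gg$. Directness I would then read off from dimensions: the $\theta$-pairing of $\gg_\alpha$ with $\gg_{-\alpha}$ yields $\dim\gk=\dim\gm+\sum_{\alpha>0}\dim\gg_\alpha$, so $\dim\gk+\dim\ga+\dim\gn=\dim\gg$, and spanning together with this equality gives $\gg=\gk\oplus\ga\oplus\gn$.

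Passing to the group, I would first treat $A$ and $N$ separately: since $\ad H$ has real eigenvalues for $H\in\ga$ and $\gn$ is nilpotent, the exponential maps $\ga\to A$ and $\gn\to N$ are diffeomorphisms, so $A$ and $N$ are simply connected; as $[\ga,\gn]\subseteq\gn$, the subspace $\ga+\gn$ is a solvable subalgebra, whose connected subgroup $AN$ is solvable with $A\cap N=\{e\}$ and $AN\cong A\times N$. The crux is the multiplication map $\mu\colon K\times A\times N\to G$, $(k,a,n)\mapsto kan$. Its differential at the identity is the addition map $\gk\oplus\ga\oplus\gn\to\gg$, an isomorphism by the previous step, and translating this makes $\mu$ a local diffeomorphism everywhere; in particular its image is open. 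The main obstacle is to promote this to a global diffeomorphism. For injectivity I would reduce, via $k_1a_1n_1=k_2a_2n_2$, to showing $K\cap AN=\{e\}$ --- the delicate point --- which I would settle using the automorphism $\Theta$ that fixes $K$ pointwise, or equivalently by passing to a faithful matrix realisation in which the decomposition is Gram--Schmidt orthogonalisation and uniqueness is manifest; $A\cap N=\{e\}$ then disposes of the $A,N$ factors. For surjectivity I would note that, by homogeneity, openness of $\mu$ makes every orbit of the left $AN$-action on the connected space $G/K$ open; distinct orbits partition $G/K$, so each is also closed, and connectedness forces a single orbit, i.e. $G=KAN$. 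Since a bijective local diffeomorphism is a diffeomorphism, this finishes the proof.
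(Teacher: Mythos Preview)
The paper does not prove this theorem; it is quoted as a structural preliminary with references to the standard sources, so there is no argument in the text to compare yours against. Your sketch is essentially the classical one and is correct at the Lie-algebra level: the bracket rule $[\gg_\alpha,\gg_\beta]\subseteq\gg_{\alpha+\beta}$ and the height bound give nilpotency of $\gn$, and the combination of $\gg_0=\gm\oplus\ga$, $\theta\gg_\alpha=\gg_{-\alpha}$ and the dimension count yields $\gg=\gk\oplus\ga\oplus\gn$. The injectivity step at the group level is handled by either of the two devices you name.

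The one real gap is in the surjectivity argument. You write that ``by homogeneity, openness of $\mu$ makes every orbit of the left $AN$-action on $G/K$ open''. Homogeneity (left $AN$-equivariance of $s\mapsto sK$) only propagates the local-diffeomorphism property along a \emph{single} orbit; it shows that the orbit $AN\cdot eK$ is open, nothing more. For the orbit through an arbitrary $gK$ you need the differential of $s\mapsto sgK$ at $s=e$ to be onto, which is the condition
\[
(\ga+\gn)+\Ad(g)\gk=\gg,
\]
and this does not follow from anything you have established. It is in fact true, but it is a separate lemma: using that $B$ is $\Ad$-invariant and that $(\ga+\gn)^{\perp_B}=\gm+\gn$, $\gk^{\perp_B}=\gp$, the displayed identity is equivalent to $\Ad(g)\gp\cap(\gm+\gn)=\{0\}$; this in turn follows because $\ad X$ has real spectrum for $X\in\gp$ (it is $B_\theta$-symmetric) while $\ad Y$ has purely imaginary spectrum for $Y\in\gm+\gn$ (block upper triangular with $B_\theta$-skew diagonal blocks), so a nonzero element common to $\Ad(g)\gp$ and $\gm+\gn$ would force $\ad Y=0$, impossible in a semisimple algebra. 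Alternatively---and this is what the standard references actually do---one avoids this step entirely by first proving $G=KAN$ in a faithful linear realization via Gram--Schmidt (the same tool you already invoke for injectivity) and then lifting, or by combining the global Cartan decomposition $G=K\exp\gp$ with the $\Ad(K)$-conjugacy of maximal abelian subspaces of $\gp$. Either way, ``by homogeneity'' is not enough here.
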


Observe that $AN$ is in fact a semidirect product. Indeed, $A$ acts on $N$  by conjugation, as is most rapidly seen by observing that $\Ad a(X)\in \gg_\alpha$ if $X\in\gg_\alpha$ for any root $\alpha\in\Sigma$ and for all $a\in A$.
	Indeed, for any $H\in\ga$, since $\ga$ is Abelian, one has 
\[
[H,\Ad a(X)]=\Ad a\left([\Ad a^{-1}(H),X]\right)=\Ad a\left([H,X]\right)=\alpha(H)\Ad a(X).
\]
	Therefore  $\Ad a$ preserves root spaces and in particular it preserves $\gn$. Thus $A$ acts on $\gn$ via the adjoint action and, passing to exponentials, it acts on $N$ by conjugation. This is tantamount to saying that $A$ normalizes $N$ inside $G$. Hence $NA=AN$ is the semidirect product $N\rtimes A$.

Let $M$ and $M'$ denote the \emph{centralizer}\index{centralizer} and \emph{normalizer}\index{normalizer} of $\ga$ in $K$, respectively. This means  that
\begin{align*}
	M&=\Bigl\{m\in K:\Ad m(H)=H\text{ for all }H\in\ga\Bigr\}\\
	M'&=\Bigl\{w\in K:\Ad w(H)\in\ga\text{ for all }H\in\ga\Bigr\}.
\end{align*}
Passing to exponentials, it follows that 
if $m\in M$, then $mam^{-1}=a$ for all $a\in A$  and 
if $w\in M'$, then $waw^{-1}\in A$ for all $a\in A$. 
The quotient group $W=M'/M$ is called the {\it Weyl group}\index{group!Weyl} of $(G,K)$. The compact Lie groups $M$ and $M'$ have the same Lie algebra, namely $\gm$, so that $W$ is in fact a finite group. The Weyl group $W$ acts on $\Sigma$ by 
\begin{equation}\label{WonSigma}
(w\cdot\alpha)(H)=\alpha(\Ad w^{-1}H),
\qquad
H\in\ga.
\end{equation}
The very same formula defines an action on the whole dual space $\ga^*$. It is worth observing that the action of $W$ on $\ga^*$ maps Weyl chambers in Weyl chambers in a free and transitive way (see \textcolor{red}{$\ldots$}). So that, the cardinality of the Weyl chambers coincides with $|W|$.
For any $\alpha\in\Sigma$, the vector space dimension of $\gg_\alpha$ is called the {\it multiplicity}\index{multiplicity!root} of $\alpha$ and is usually denoted $m_\alpha$.   The following element of $\ga^*$ plays a crucial role in the theory:
\begin{equation}\label{rho}
\rho=\frac{1}{2}\sum_{\alpha\in\Sigma^+} m_\alpha\alpha.
\end{equation}
This linear functional on $\ga$  naturally appears in relation with the semidirect product structure of the Iwasawa group $AN$, see~\eqref{modAN}. 

\vskip0.2truecm
{\bf Example: the decomposition of $\Sldr$.} We consider the Lie algebra $\gg=\sldr$ of $G=\Sldr$, namely
\[\sldr=\{X\in\gldr:\tr X=0 \}. \]
The Cartan decomposition associated to  the standard involution $
\theta(X)=-\t X$ reads
\[
\sldr=\sodr+{\rm Sym}_0(d),
\]
where $\gp={\rm Sym}_0(d)$ is the space of $d\times d$ symmetric and traceless real matrices. The
Cartan involution $\Theta$ for $\Sldr$ is then
\begin{equation*}
\Theta g=\t g^{-1}
\end{equation*}
as for all matrix groups with real entries. Hence $K=\Sod$, a maximal compact subgroup of $\Sldr$.
The diffeomorphism $(k,X)\mapsto k\exp X$ of $\Sod\times{\rm Sym}_0(d)\to G$ is just the  classical polar decomposition. The center of $\Sldr$ is the identity matrix if $d$ is odd and $\{\pm\id\}$ if $d$ is even.
The natural maximal Abelian subspace of 
${\rm Sym}_0(d)$ is the $(d-1)$-dimensional vector space consisting of the diagonal matrices
${\rm diag}(a_1,\dots,a_d)$ with $a_1+\dots+a_d=0$.  Thus, the real rank of $\sldr$ is $d-1$. Let $E_{ij}$ denote  the matrix whose only non-zero entry is  $1$  at position $(i,j)$. Then,  for
$H={\rm diag}(a_1,\dots,a_d)$  and  $i\neq j$ 
\[
[H,E_{ij}]=(a_i-a_j)E_{ij}
\]
and in fact $E_{ij}$ spans a root space provided  that $i\neq j$. It is customary to introduce the linear functionals  $e_k(\cdot)$ on $\ga$, with $1\leq k\leq d$,  via $e_k({\rm diag}(a_1,\dots,a_d))=a_k$. Thus,  for $i\neq j$ the (restricted) root $\alpha_{ij}=e_i-e_j$ acts on $H={\rm diag}(a_1,\dots,a_d)$ by
\[
\alpha_{ij}(H)=a_i-a_j.
\]
and we write in simplified form $\gg_{ij}$ in place of $\gg_{\alpha_{ij}}$ for the root space
\[
\gg_{ij}={\rm sp}\{E_{ij}\},\qquad i\neq j.
\]
For $i<j$ the matrix $E_{ij}$ is upper triangular, and for $i>j$ it is lower triangular.  A natural choice of 
Weyl chamber is 
\[
\ga^+=\Bigl\{{\rm diag}(a_1,\dots,a_d):a_1>a_2>\dots>a_d\Bigr\}.
\]
It is immediate to  check that for $j=1,\dots,d-1$ the roots $\delta_j=e_j-e_{j+1}$ are the simple ones and that  the set of positive roots is 
\[
\Sigma^+=\{\alpha_{ij}:i<j\}.
\]
It follows  that the nilpotent Iwasawa Lie algebra $\gn$ defined in~\eqref{iwaN} is just the Lie algebra of strictly upper triangular matrices.
 Notice that  $\gg_{0}=\ga$, that is, $\gm=\{0\}$
and  that $\dim\gg_\alpha=1$ for every restricted root $\alpha\in\Sigma$. Hence the functional $\rho$ has the form
\[\rho(H)=\frac{1}{2}\sum_{i<j}\alpha_{ij}(H)=\frac{1}{2}\sum_{i<j}(a_i-a_j)=\sum_{j=1}^d(\frac{d+1}{2}-j)a_j.\]

Let $A$ be the group of diagonal matrices with positive entries and determinant $1$, namely
\[
{\rm diag}({\E}^{a_1},\dots,{\E}^{a_d}),
\qquad
a_1+\dots+a_d=0,
\]
and let $N$ be the group of unipotent upper triangular matrices, namely those of the form
\begin{equation*}
\bmatrix
1      &a_{12}&\dots     &&&\dots&a_{1,d} \\
0      &1             &\ddots &     &&&\vdots          \\
\vdots      &\ddots        &\ddots  &&& \ddots    &\vdots          \\
\vdots &       &\ddots&    &&        1&a_{d-1,d}\\
0      &\dots        &\dots&&&0           &1
\endbmatrix.
\end{equation*}
Then $\ga$ and $\gn$ are the Lie algebra of $A$ and $N$, respectively. Hence $\Sldr\simeq KAN$ by the Iwasawa decomposition.
\section{Symmetric Spaces}\label{sec:symsp}
Symmetric spaces are very special kinds of homogeneous spaces.  The reader is assumed to be familiar with basic Differential Geometry and in particular with the main results on group actions and homogeneous spaces. The natural reference for the material in this section is the celebrated monography~\cite{dls} by Helgason, of which this is a synthesis with examples. Other sources are for example \cite{Iozzi}, \cite{Wolf}.

We very briefly recall the basic facts that we shall use throughout. A homogeneous space $X$ is a transitive $G$-space. Saying that $X$ is a $G$\emph{-space}\index{$G$-space} means that we are given a continuous map
$G\times X\to X$, written $(g,x)\mapsto gx$ and called an action of $G$ on $X$, which satisfies
\begin{enumerate}
\item[(i)] $x\mapsto gx$ is a homeomorphism of $X$ for each $g\in G$,
\item[(ii)]  $g(hx)=(gh)x$ for all $g,h\in G$ and $x\in X$.
\end{enumerate}
The $G$-space $X$  is called \emph{transitive}\index{transitive!$G$-space}
if for every $x,y\in X$ there exists $g\in G$  such that $gx=y$.
In this case $X$ is indentified with $G/H$ through the action of $G$, where $H$ is the isotropy subgroup at some point $x_0\in X$, namely
\[
H=\bigl\{g\in G:gx_0=x_0\bigr\}.
\]
This identification depends on the choice of the reference point $x_0\in X$ and is given by the bijection
\[
G/H\to X,
\qquad
gH\mapsto gx_0.
\]
If we choose a different reference point $x_0'=g_0x_0$ for some $g_0\in G$, it is sufficient to replace $H$ with $H'=g_0 Hg_0^{-1}$. The map $g\mapsto g_0 gg_0^{-1}$ induces a $G$-equivariant homeomorphism between $G/H$ and $G/H'$. If the topology on $G/H$ is the quotient topology then the identification map is actually a homeomorphism. 

In the present contribution, we  often consider different $G$-spaces of the same group. For clarity, we shall thus adopt notational variations to distinguish among different actions, such as $g[x]$ or $g.x$ or $g\cdot x$ or $g\langle x\rangle$ and so forth.

\subsection{Riemannian Globally Symmetric Spaces}\label{RGSS}
Let $\mathcal{M}$ be a Riemannian manifold and let $I(\mathcal{M})$ denote the group of isometries of~$\mathcal{M}$. We shall endow $I(\mathcal{M})$ with the compact-open topology, the smallest topology in which all the sets
\[
W(C,U)=\left\{g\in I(\mathcal{M}): g(C)\subset U\right\}
\]
are open, where $C$ varies in the compacta of $\mathcal{M}$ and $U$ in the open sets.
\begin{theorem} [Theorem~2.5, Chap.~IV,\cite{dls}] Let $\mathcal{M}$ be a Riemannian manifold.
\begin{enumerate}
\item[(i)] The group of isometries $I(\mathcal{M})$  with the compact-open topology is a locally compact topological group acting on $\mathcal{M}$.
\item[(ii)] The isotropy subgroup of $I(\mathcal{M})$ at any point of $\mathcal{M}$ is compact.
\end{enumerate}
\end{theorem}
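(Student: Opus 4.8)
The plan is to realize the isometry group as a closed subset of the orthonormal frame bundle of $\mathcal{M}$, which is itself a locally compact manifold, and to read off both assertions from this identification. Throughout I would assume $\mathcal{M}$ connected (the disconnected case requires only minor modifications, restricting to the subgroup preserving a fixed component) and equip it with the Riemannian distance $\mathrm{dist}$, with respect to which every $g\in I(\mathcal{M})$ is distance-preserving. This has two immediate consequences that I would establish first: the family $I(\mathcal{M})$ is equicontinuous, so that on it the compact-open topology agrees with the topology of uniform convergence on compacta; and, since $\mathcal{M}$ is locally compact Hausdorff, the evaluation map $I(\mathcal{M})\times\mathcal{M}\to\mathcal{M}$ is continuous while composition and inversion are continuous, which already yields the topological group structure and the continuity of the action claimed in part (i).

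The cornerstone is the rigidity of isometries: an isometry commutes with the exponential map, in the sense that
\[
g\circ\exp_p=\exp_{g(p)}\circ\, dg_p \qquad (p\in\mathcal{M}).
\]
Hence if $g(p)=p$ and $dg_p=\id$ for a single point $p$, then $g=\id$ on a normal neighbourhood of $p$, and the set on which $g$ and $dg$ agree with the identity is open and closed, so $g=\id$ by connectedness. Fixing a base frame $u_0$ over a point $p_0$, this shows that the evaluation map $\Phi\colon I(\mathcal{M})\to\mathcal{O}(\mathcal{M})$ sending $g$ to the frame $(g(p_0),dg_{p_0}u_0)$ is injective; the displayed identity together with equicontinuity then shows that $\Phi$ is a homeomorphism onto its image.

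It remains to prove that $\Phi(I(\mathcal{M}))$ is closed in $\mathcal{O}(\mathcal{M})$, and this I expect to be the main obstacle. Concretely, if $g_n\in I(\mathcal{M})$ and $\Phi(g_n)$ converges to a frame $(q,v)$, one must produce an isometry $g$ with $\Phi(g)=(q,v)$. Equicontinuity and distance-preservation pass to the limit and yield a distance-preserving map $g$ as the uniform-on-compacta limit of the $g_n$; the genuine difficulty is to show that this metric limit is a surjective smooth isometry rather than a mere $1$-Lipschitz contraction. Here I would invoke the Myers--Steenrod regularity phenomenon, recovering $g$ and its smoothness from the limiting frame via the exponential-map identity above, and controlling the inverses $g_n^{-1}$ (again equicontinuous, hence subconvergent) to secure bijectivity.

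Granting closedness, the conclusions are immediate. The orthonormal frame bundle $\mathcal{O}(\mathcal{M})$ is a manifold, hence locally compact, so its closed subset $\Phi(I(\mathcal{M}))$ is locally compact; since $\Phi$ is a homeomorphism onto this set, $I(\mathcal{M})$ is locally compact, completing (i). For (ii), the isotropy subgroup at $p_0$ is exactly the preimage under $\Phi$ of the fibre of $\mathcal{O}(\mathcal{M})$ over $p_0$; that fibre is a copy of the compact group $O(\dim\mathcal{M})$, and its intersection with the closed set $\Phi(I(\mathcal{M}))$ is therefore compact, whence the isotropy subgroup is compact.
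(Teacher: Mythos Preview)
The paper does not prove this theorem at all; it merely records the statement and attributes it to Theorem~2.5, Chap.~IV of Helgason's \emph{Differential Geometry, Lie Groups, and Symmetric Spaces}, so there is no in-paper argument to compare against. Your proposal is the standard Myers--Steenrod/frame-bundle approach and is essentially correct: the injection $\Phi$ into the orthonormal frame bundle via $g\mapsto (g(p_0),dg_{p_0}u_0)$, the rigidity identity $g\circ\exp_p=\exp_{g(p)}\circ\, dg_p$, and the identification of the isotropy with a closed subset of the compact fibre $O(\dim\mathcal{M})$ are exactly the right ingredients. You have also correctly isolated the only nontrivial step, namely closedness of $\Phi(I(\mathcal{M}))$ and the attendant Myers--Steenrod regularity, and your outline for it (equicontinuity to extract a limit map, control of inverses for bijectivity, recovery of smoothness through the exponential map) is the standard route and would go through with the expected bookkeeping.
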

\begin{definition} The Riemannian manifold $\mathcal{M}$ is a Riemannian globally symmetric space if each $p\in \mathcal{M}$ is an isolated fixed point of an isometry $\sigma_p$ of $\mathcal{M}$ that is  involutive ($\sigma_p^2=\id$).
\end{definition}
It may be shown that each $\sigma_p$ is in this case unique and that  there exists a neighborhood $N_p$ of $p$  in which $\sigma_p$ is the geodesic symmetry. This means that if $q\in N_p$ and $\gamma(t)$ is the  geodesic such that $\gamma(0)=p$ and $\gamma(1)=q$, then $\sigma_p(q)=\gamma(-1)$.
\vskip0.2truecm
{\bf Euclidean space. } Let $\mathcal{M}=\R^n$ and fix $p\in\R^n$. The globally defined map $\sigma_p(x)=2p-x$ is clearly involutive and isometric with respect to the the Euclidean distance  because $\|\sigma_p(x)-\sigma_p(y)\|=\|y-x\|$. Further, $\sigma_p(x)=x$ if and only if $x=p$, so $p$ is an isolated fixed point.

{\bf The sphere. } Let $\mathcal{M}=S^{n-1}$ and consider the map defined on $\R^n$ by  $x\mapsto\Omega x$ where
\[
\Omega=\begin{bmatrix}1&\\&-{\rm I}_n\end{bmatrix}.
\]
Evidently, it  leaves the unit sphere invariant
and  is an isometry with respect to the natural Riemannian structure on it. It fixes the north pole $e_0=(1,0,\dots,0)$. Next choose $p\in \mathcal{M}$ and take $R\in\Son$ such that $p=Re_0$. Then $\sigma_p=R\Omega R^{-1}$ is the required involutive isometry, as the reader is invited to check.
\vskip0.2truecm

{\bf The upper half plane.} Let $\mathcal{M}$ denote the upper half plane, which we  think of as one of the  natural models of the 2-dimensional hyperbolic space. We realize it as the complex numbers with positive imaginary part.  The Riemannian structure on $\mathcal{M}$ is given by the inner product 
\begin{equation*}
\langle u, v\rangle_z=\frac{(u,v)}{4y^2}
\end{equation*}
where  $u,v\in T_z( \mathcal{M})$ are tangent vectors at $z=x+iy\in \mathcal{M}$. It is important to observe that $G=\Sltwor$ acts transitively on $ \mathcal{M}$ by means of the {\it M\"obius action}\index{M\"obius action}, namely
\begin{equation}
\label{moebius}
g[z]=\begin{bmatrix}a&b\\c&d\end{bmatrix}[z]=\frac{az+b}{cz+d}.
\end{equation}
The  imaginary part of $g[z]$ is positive if such is that of $z$, so that~\eqref{moebius} is indeed an action.
To show transitivity, we fix $p=b+ia\in \mathcal{M}$ with $a>0$ and consider 
\[
g_p=\begin{bmatrix}1&b\\0&1\end{bmatrix}\begin{bmatrix}\sqrt{a}&0\\0&1/\sqrt{a}\end{bmatrix}
=\begin{bmatrix}\sqrt{a}&b/\sqrt{a}\\0&1/\sqrt{a}\end{bmatrix},
\]
an element of the  Iwasawa  subgroup $NA$ of $\Sltwor$. 
It is  immediate to check that $g_p[i]=p$ and that the isotropy group at $i$ is $K=\Sotwo$, so that $ \mathcal{M}\simeq G/K$.

As for the isometric involutions, consider first the M\"obius action induced by
\[
J=\begin{bmatrix}0&1\\-1&0\end{bmatrix},
\]
which is the map $z\mapsto -1/z$, namely $x+iy\mapsto(-x+iy)/(x^2+y^2)$, and may also be described  in polar coordinates by
\[
\rho(\cos\theta+i\sin\theta)\mapsto\frac{1}{\rho^2}(-\cos\theta+i\sin\theta).
\]
This fixes only $i$ (for $\rho=1$ and $\theta=\pi/2$) and is thus  a global involution of which $i$ is an isolated fixed point. A global involution fixing only the point $p$ is given by the M\"obius action of the $\Sltwor$ element $g_pJg_p^{-1}$.  

Of course, it needs to be to seen that these maps are indeed isometries relative to the hyperbolic distance.  To this end, observe that any differentiable path $\gamma\colon[a,b]\to  \mathcal{M}$, with $\gamma(t)=x(t)+iy(t)$  has length
\begin{equation*}
L(\gamma)
=\int_a^b\langle\dot\gamma(t),\dot\gamma(t)\rangle^{1/2}\,\de t
=\frac{1}{2}\int_a^b\frac{\sqrt{\dot x^2(t)+\dot y^2(t)}}{y(t)}\,\de t.
\end{equation*}
It is then very easy to check that $L(g[\gamma])=L(\gamma)$ if $g$ is either $J$ or any of the following
\[
\begin{bmatrix}\E^s&0\\0&\E^{-s}\end{bmatrix}\in A,
\qquad
\begin{bmatrix}1&t\\0&1\end{bmatrix}\in N.
\]
We now show  that these are enough. Indeed, any lower triangular unipotent matrix in $G$ is of the form
$JnJ^{-1}$ for some $n\in N$. Next, any rotation in $\Sotwo$ with $\cos\theta\neq0$ can be written
\[
\begin{bmatrix}\cos\theta&\sin\theta\\-\sin\theta&\cos\theta\end{bmatrix}
=
\begin{bmatrix}1 &\tan\theta \\
0&1
\end{bmatrix}
\begin{bmatrix} 1/\cos\theta&0 \\
0&\cos\theta
\end{bmatrix}
\begin{bmatrix} 1&0 \\
-\tan\theta&1
\end{bmatrix}.
\]
The rotations with $\cos\theta=0$ are of the form $\pm J$, and we conclude that
any element in $K$ is a finite product  of elements\footnote{This argument is nothing else but  the Bruhat decomposition of $\Sltwor$.} chosen in $\{\pm J\}\cup A\cup N$. By the Iwasawa decomposition we conlcude that in fact $L(g[\gamma])=L(\gamma)$ for any $g\in G$. 
This entails that  $\Sltwor$ acts by isometries on $ \mathcal{M}$. It  is worth mentioning that the isometry group of the upper half plane is generated by $\Sltwor$ and by the map $z\mapsto1/\overline{z}$.  
 \vskip0.2truecm

{\bf The unit disk}. A second natural model of the 2-dimensional hyperbolic space is the unit disk $ \mathcal{M}=\{z\in\C:|z|<1\}$, later denoted $\DD$. This is the Riemannian manifold with inner product
\begin{equation*}
\langle u, v\rangle_z=\frac{(u,v)}{(1-|z|^2)^2}
\end{equation*}
where  $u,v\in T_z( \mathcal{M})$ are tangent vectors at $z\in  \mathcal{M}$. The group 
\[G=\Suone:=\Bigl\{\begin{bmatrix}a&b\\\bar{b}&\bar{a}\end{bmatrix}:a,b\in\C,\;|a|^2-|b|^2=1\Bigr\}\] 
acts on $ \mathcal{M}$ by the very same M\"obius action as given by~\eqref{moebius}. Following similar reasoning as above, we can prove that the action is transitive using the $NA$ action on the point $0\in  \mathcal{M}$, where the  Iwasawa components of $G$ are  obtained from that of $\Sltwor$  by conjugating within ${\rm SL}(2,\C)$ first with a $\pi/4$-rotation and then with $\Lambda^{-1}$, where
\[\Lambda=\frac{1}{\sqrt2}\begin{bmatrix}1&i\\i&1\end{bmatrix}.\] 
The Iwasawa subgroups are explicitly given by 
\begin{align*}
K&=\Bigl\{\begin{bmatrix}\E^{i\theta}&0\\0&\E^{-i\theta}\end{bmatrix}   :\theta \in [0,2\pi)\Bigr\},\\
A&=\Bigl\{\begin{bmatrix}\cosh t&\sinh t\\\sinh t&\cosh t\end{bmatrix}    : t\in\R\Bigr\},
\\
N&=\Bigl\{\begin{bmatrix}1+is&-is\\is&1-is\end{bmatrix}\   :s\in\R\Bigr\}.
\end{align*}
Of course the isotropy at $o\in  \mathcal{M}$ is $K$ and $ \mathcal{M}\simeq G/K$. 
The reader is invited to write the isometric involutions that prove $ \mathcal{M}$ to be a symmetric space. We content ourselves with remarking that the {\it Cayley transform}\index{Cayley!transform} $c\colon  \mathcal{M}\to\C$
\begin{equation*}
c(z)=i\frac{z+i}{z-i}
\end{equation*}
is an isometry of the unit disc onto the upper half plane which commutes with the M\"obius actions.
\vskip0.2truecm

{\bf The positive definite symmetric matrices.}
The example of the upper half plane can be generalized in higher dimensions. We have already seen that there exists a diffeomorphism between the upper half plane and $G/K$ where $G=\Sltwor$ and $K=\Sotwo$. We are going to investigate the case where $G=\Sldr$, $d\geq 2$. We denote by $(\,\cdot\,,\,\cdot\,)$ the usual scalar product in $\R^d$ and we put
\begin{align*}
\Pdr&:=\{p\in\Symdr : (v,pv)>0 \text{ for every }v\in\R^d \},
\end{align*}
the set of $d\times d$ positive-definite symmetric matrices. Observe that $\Pdr$ is an open subset of $\Symdr$ and so it is naturally a smooth manifold. Its dimension   is 
\[
m:=\dim(\Pdr)=\frac{d(d+1)}{2}.
\]
We show that  $\Pdr\subseteq\R^m$ is the interior of a convex cone. Let $p,\,q\in\Pdr$ and $t>0$, 
then $tp\in\Pdr$, $(1-t)q\in\Pdr$  and also
\[tp+(1-t)q\in\Pdr,\] 
provided that $0\leq t\leq 1$. The boundary of $\Pdr$ is the set of all singular positive semidefinite matrices. It is easy to see that $\Pdr$ is a foliated manifold in which each leaf is the preimage of a positive number through the determinant mapping. The preimage of ${1}$ under the determinant mapping is denoted by
\[\SPdr:=\Pdr\cap\Sldr.\]

\color{black}
\begin{figure}[h]
	\centering
	\begin{tikzpicture}[xscale=2]
		\draw[->] (0,0)--(2.5,0);
		\draw[->] (0,0)--(0,5);
		\draw[->] (0,0)--(-0.5,-1);
			\shade[right color=white,left color=black,opacity=0.1]
			(-2,4.02)  arc (180:360:{2} and 0.4) --(2,4) --(0,0) -- cycle
			;
			\shade[right color=white,left color=black,opacity=0.2]
			(-1.937,4.02)  arc (180:360:{1.937} and 0.37) --  plot[samples=100,variable=\s,domain=75.5:-75.5]({0.5*tan(\s)},{sec(\s)}) -- cycle
			;
			\shade[left color=white,right color=black,opacity=0.2]
			(0,4.02) circle ({1.937} and 0.37);
			\draw
			(-1.99,3.98) -- (0,0) -- (1.99,3.98);
			\draw
			(0,4.02) circle ({2} and 0.4);
			\draw[thick,samples=100,variable=\s,domain=-75.5:75.5] plot({0.5*tan(\s)},{sec(\s)});
			\draw[samples=100,variable=\s,domain=-82.7:82.7,dashed] plot({0.25*tan(\s)},{0.5*sec(\s)});
			\draw[samples=100,variable=\s,domain=-66.2:66.2,dashed] plot({0.8*tan(\s)},{1.6*sec(\s)});
			\draw[dashed]
			(0,4.02) circle ({1.837} and 0.35);
			\draw[thick]
			(0,4.02) circle ({1.937} and 0.37);
			\node at (0.5,3) {$\SPdr$};
			\node at (1.1,1) {$\Pdr$};
	\end{tikzpicture}
	\caption{ The foliation of the cone $\Ptwor$ consists of the connected components of the hyperboloids of two sheets each of which is the preimage under the determinant mapping of a positive number.}
	\label{fig:cono}
\end{figure}
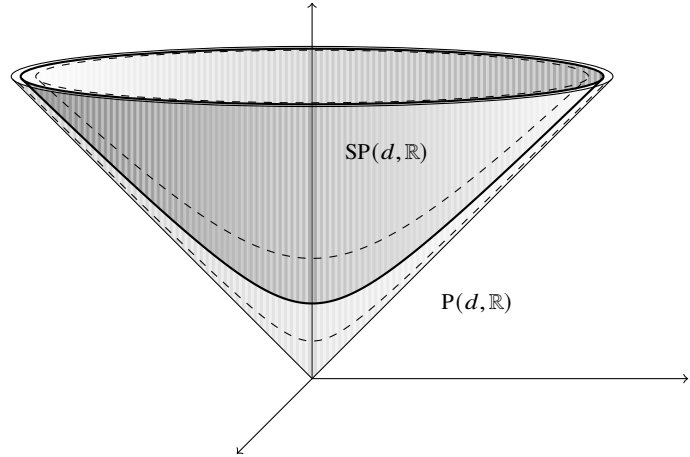
The group $\Gldr$ acts on $\Pdr$ by the action
\begin{equation}
\label{actionPnr}
(g,p)\mapsto gp\t g=:g[p].
\end{equation}
We next show that the action is transitive. By the  spectral theorem, for every  $p\in\Pdr$ there exist $O\in\Sodr$ and a diagonal matrix  $D$ with positive entries on the diagonal such that $p=O^{-1}DO$.
Since $D$ has positive entries on the diagonal, we can take its square root $D^\frac{1}{2}$. Let $g=O^{-1}D^\frac{1}{2}O$, then $g=\t g$ and 
\[p=g\t g=g[{\mathrm I}_d]\;,\]
which proves that the action is transitive. The stabilizer at ${\rm I}_d\in\Pdr$ is
\[{\rm O}(d,\R):=\{g\in\Gldr: g\t g={\rm I}_d\}\;.\]
Hence we have the diffeomorphism
\begin{equation*}
\Pdr\simeq \Gldr/{\rm O}(d,\R).
\end{equation*}
The submanifold $\SPdr$ is stable under the restriction of the previous action to $\Sldr$, whose action on $\SPdr$ is transitive. The stabilizer of ${\rm I}_d$  is $\Sodr$, so
\begin{equation*}
\SPdr\simeq\Sldr/\Sodr.
\end{equation*}


Now we analyze the Riemannian structure on $\Pdr$, using \cite{pranab} as main reference. First of all, we observe that if $p\in \Pdr$, then $T_p\Pdr\simeq \Symdr$. We define
\begin{equation}
\label{RiemPnr}
\langle X,Y \rangle_p:=\tr (p^{-1}Xp^{-1}Y)
\end{equation}
where  $X,\,Y\in T_p(\Pdr)$. It is easy to see that $\langle\,\cdot\,,\,\cdot\,\rangle_p$ is an inner product. We check that the $\Gldr$-action preserves this form. Let $g\in \Gldr$. Then by \eqref{actionPnr} and \eqref{RiemPnr}
\begin{align*}
\langle \de g(X),\de g(Y)\rangle_{g.p} &= \langle gX\t g,gX\t g\rangle_{g.p} \\
&=\tr (\t g^{-1}p^{-1}Xp^{-1}Y\t g)\\
&=\tr (p^{-1}Xp^{-1}Y)=\langle X,Y\rangle_p,
\end{align*}
because the trace is invariant under conjugation. Hence the Riemannian structure on $\Pdr$ defined in \eqref{RiemPnr} is $\Gldr$-invariant.
Now, take $p\in\Pdr$ and  define the mapping $\sigma_p: \Pdr\rightarrow \Pdr $ by
\[
\sigma_p(q)=pq^{-1}p=pq^{-1}\t p.
\]
Clearly,  $\sigma_p(p)=p$ and $\sigma_p^2(q)=q$ for every $q\in \Pdr$. It remains to show that $p$ is an isolated fixed point for $\sigma_p$. Let $q\in\Pdr$ be another nearby fixed point, that is $pq^{-1}p=q$. Thus, there exist   $ Y\in \gg$ and a small  $t>0$ such that  $q=p\exp (tY)$. Hence 
\[
p(p\exp (tY))^{-1}p=p\exp(tY),
\]
that is $\exp (-tY)=\exp(tY)$. If $t$ is smaller than the radius of the ball in which the exponential mapping is injective, this implies $Y=0$ and so $q=p$.
We have proved that $\Pdr$ is a symmetric space. Observe that if $p\in\SPdr$, then $\sigma_p(\SPdr)=\SPdr$ and so $\SPdr$ with the Riemannian metric restricted from $\Pdr$ is a symmetric space,  too. 

In the special case $d=2$, the symmetric space ${\rm SP}(2,\R)$ is isomorphic to the unit disk, in fact it is one of the possible realizations of the hyperbolic space $\H ^1$. It is important to observe that for a general $d>2$ there are no isometries between $\H ^d=\Sodone/\Sod$ and $\SPdr$, because the former has constant curvature while the latter has not.\\

The next results establish that there the Riemannian globally symmetric spaces are completely described by Lie algebraic data.

\begin{proposition}[Lemma~3.2, Chap.~IV, \cite{dls}]\label{} Let $ \mathcal{M}$ be a Riemannian globally symmetric space. Then $I( \mathcal{M})$ has a smooth structure compatible with the compact-open topology which makes it a Lie group. 
\end{proposition}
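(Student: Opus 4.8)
The plan is to realize $I(\mathcal{M})$ as a closed submanifold of the orthonormal frame bundle $O(\mathcal{M})$ and to transport the smooth structure back along this identification. The pivotal input is the rigidity of isometries: an isometry of a connected Riemannian manifold is completely determined by its value and its differential at a single point. Concretely, if $\phi$ is an isometry, $o\in\mathcal{M}$ a fixed base point and $U$ a normal neighbourhood of $o$, then $\phi=\exp_{\phi(o)}\circ\,d\phi_o\circ\exp_o^{-1}$ on $U$, and this formula propagates by connectedness along broken geodesics. Fixing an orthonormal basis of $T_o\mathcal{M}$, I would define $\Phi\colon I(\mathcal{M})\to O(\mathcal{M})$ by sending $\phi$ to the frame $(\phi(o);d\phi_o)$. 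Rigidity makes $\Phi$ injective, and $\Phi$ is continuous when $I(\mathcal{M})$ carries the compact-open topology; combined with the local compactness of $I(\mathcal{M})$ furnished by Theorem~2.5, one checks that $\Phi$ is a homeomorphism onto its image.

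Before analysing the image it is convenient to record the homogeneity of $\mathcal{M}$. The geodesic symmetries make $\mathcal{M}$ geodesically complete, so by Hopf--Rinow any two points are joined by a minimizing geodesic; the symmetry at its midpoint interchanges the endpoints, whence $G:=I(\mathcal{M})$ acts transitively. With $K$ the isotropy subgroup at $o$, which is compact by Theorem~2.5(ii), the orbit map furnishes a homeomorphism $G/K\simeq\mathcal{M}$. This pins down the image $\Phi(G)$ as the set of those frames that extend, through the exponential formula above, to a globally well-defined isometry.

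The heart of the argument is then to show that $\Phi(G)$ is a closed embedded submanifold of $O(\mathcal{M})$, and for this I would bring in the Lie algebra $\gg$ of Killing vector fields on $\mathcal{M}$ as the engine. A Killing field $X$ is determined by the pair $(X_o,(\nabla X)_o)$, with $(\nabla X)_o$ skew-symmetric, so $\dim\gg\le n+\binom{n}{2}=\dim O(\mathcal{M})$ and $\gg$ is finite-dimensional. On a complete manifold every Killing field is complete, so each element of $\gg$ integrates to a one-parameter group of isometries; these flows generate the identity component of $G$ and identify $\gg$ with its infinitesimal transformations. Integrating this finite-dimensional Lie algebra produces a Lie group $G^0$ acting on $\mathcal{M}$ by isometries whose image under $\Phi$ is the orbit through the reference frame, and one checks, using completeness, that this orbit is closed and embedded in $O(\mathcal{M})$. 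The full group $G$ is the union of the translates of $G^0$, which form a discrete family because $G^0$ is open in the locally compact group $G$.

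Finally I would transport the smooth structure from $\Phi(G)\subset O(\mathcal{M})$ to $G$ and verify that multiplication and inversion are smooth; this is immediate from the closed form $\phi=\exp_{\phi(o)}\circ\,d\phi_o\circ\exp_o^{-1}$, which renders composition a smooth operation on frames. Compatibility with the compact-open topology is automatic, since $\Phi$ was already shown to be a homeomorphism onto its image. I expect the main obstacle to be precisely the embedded-submanifold step: proving that $\Phi(G)$ is a genuine regular submanifold (equivalently, that the Lie-algebra-to-group integration yields an immersed-and-closed orbit) is the analytic core of the Myers--Steenrod-type argument, and all the completeness and rigidity inputs are marshalled to control it.
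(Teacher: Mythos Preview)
The paper does not actually prove this proposition; it simply records it as Lemma~3.2, Chap.~IV of \cite{dls} and moves on, treating it as background from the literature. Your sketch is therefore not competing with any argument in the paper itself.

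That said, what you have outlined is essentially the standard Myers--Steenrod strategy that underlies Helgason's proof: embed $I(\mathcal{M})$ into the orthonormal frame bundle via $\phi\mapsto(\phi(o),d\phi_o)$, use rigidity of isometries to make this injective, exploit completeness (supplied here by the geodesic symmetries) to ensure Killing fields integrate globally, and read off the finite-dimensional Lie structure from the finite-dimensionality of the space of Killing fields. Your identification of the ``embedded submanifold'' step as the analytic core is accurate; in Helgason's treatment this is handled by first establishing that $I(\mathcal{M})$ is locally compact (his Theorem~2.5) and then invoking the general Myers--Steenrod machinery, rather than by a direct orbit-closure argument in $O(\mathcal{M})$. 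So your route is sound in outline and matches the spirit of the cited source, but the paper itself offers nothing to compare against beyond the citation.
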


\begin{theorem}[Theorem~3.3, Chap.~IV, \cite{dls}]\label{SS} Let $ \mathcal{M}$ be a Riemannian globally symmetric space, $p_0\in  \mathcal{M}$, $G=I_0( \mathcal{M})$, the connected component of the identity of $I( \mathcal{M})$.
\begin{enumerate}
\item[(i)] The isotropy subgroup $K$ of $G$ at $p_0$ is compact, and $ \mathcal{M}\simeq G/K$ under the map $gK\mapsto g[p_0]$.
\item[(ii)] The map $\sigma\colon g\mapsto s_{p_0}gs_{p_0}$ is an involutive automorphism of $G$ such that $K$ lies between the closed group $K_\sigma$ of the fixed points of $\sigma$ and its identity component. The subgroup $K$ contains no normal subgroups other than $\{e\}$.
\item[(iii)] Let $\gg$ be the Lie algebra of $G$ and $\gk$ be the Lie algebra of $K$. Then
\[
\gk=\Bigl\{X\in\gg:(\de\sigma_e)X=X\Bigr\}
\]
and if 
\[
\gp=\Bigl\{X\in\gg:(\de\sigma_e)X=-X\Bigr\}
\]
then $\gg=\gk+\gp$ as vector space direct sum. Let $\pi$ denote the natural projection $G\to G/K$. Then
$\de\pi_e$ maps $\gk$ into $\{0\}$ and $\gp$ isomorphically onto $T_{p_0} \mathcal{M}$. If $X\in\gp$, then the geodesic emanating from $p_0$ with tangent vector $\de\pi_e(X)$ is given by
\[
\gamma_{\de\pi_e(X)}(t)=\exp tX\cdot p_0.
\]
 Moreover, if $Y\in T_{p_0} \mathcal{M}$, then $(\de\exp tX)_{p_0}Y$ is the parallel translate of $Y$ along the geodesic.
\end{enumerate}
\end{theorem}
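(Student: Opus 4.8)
The plan is to reduce the entire statement to two geometric facts about the symmetry $s:=s_{p_0}$: that geodesic symmetries are \emph{natural}, i.e. $g\,s_{p_0}\,g^{-1}=s_{g[p_0]}$ for every isometry $g$ (a consequence of the uniqueness of $\sigma_p$ noted above), and that the differential of $s$ at its isolated fixed point is $-\id$ on $T_{p_0}\mathcal M$ (since $s$ is the geodesic symmetry, $\exp_{p_0}v\mapsto\exp_{p_0}(-v)$). For part (i) I would first establish that $G$ acts transitively. A symmetric space is complete, because a geodesic segment extends past either endpoint by reflecting in the symmetry based there; hence by Hopf--Rinow any $p,q\in\mathcal M$ are joined by a geodesic, say with midpoint $m$, and $s_m$ swaps $p$ and $q$, so $s_m s_p[p]=q$. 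Sliding the midpoint back to $p$ along the geodesic deforms $s_m s_p$ continuously to $s_p s_p=\id$, which shows $s_m s_p\in I_0(\mathcal M)=G$. Thus $G$ is transitive, the orbit map $gK\mapsto g[p_0]$ is the standard diffeomorphism $\mathcal M\simeq G/K$, and $K$ is compact because it is a closed subgroup of an isotropy group of $I(\mathcal M)$, which is compact by the compactness result recorded above.

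For part (ii), conjugation $\sigma(g)=sgs^{-1}=sgs$ is a continuous automorphism of $I(\mathcal M)$ fixing the identity, hence it preserves the identity component and restricts to an involutive automorphism of $G$. If $g\in K$ then $g[p_0]=p_0$, so naturality gives $s=s_{p_0}=s_{g[p_0]}=gsg^{-1}$; thus $g$ commutes with $s$ and $\sigma(g)=g$, proving $K\subseteq K_\sigma$. I defer the reverse inclusion of identity components to the Lie algebra computation below. Finally, if $H$ is a normal subgroup of $G$ contained in $K$, then for $h\in H$ and any $g\in G$ the element $g^{-1}hg\in H\subseteq K$ fixes $p_0$, so $h$ fixes $g[p_0]$; transitivity forces $h$ to fix all of $\mathcal M$, whence $h=e$.

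For part (iii), set $\theta=d\sigma_e$, an involution of $\gg$ with eigenspace decomposition $\gg=\gg_+\oplus\gg_-$. Writing $\pi(g)=g[p_0]$, the relation $\pi\circ\sigma=s\circ\pi$ (valid since $s[p_0]=p_0$) differentiates at $e$ to $d\pi_e\circ\theta=ds_{p_0}\circ d\pi_e=-d\pi_e$. Consequently, for $X\in\gg_+$ one gets $d\pi_e X=d\pi_e(\theta X)=-d\pi_e X$, so $X\in\ker d\pi_e=\gk$, giving $\gg_+\subseteq\gk$; conversely $X\in\gk$ yields $\exp tX\in K\subseteq K_\sigma$, hence $\exp(t\,\theta X)=\exp tX$ and $\theta X=X$, so $\gk\subseteq\gg_+$. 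Therefore $\gk=\gg_+$, and putting $\gp=\gg_-$ gives the vector space direct sum $\gg=\gk+\gp$. Since $\ker d\pi_e=\gk$ and $\gk\cap\gp=\{0\}$, the map $d\pi_e$ is injective on $\gp$, and it is onto $T_{p_0}\mathcal M$ because transitivity makes $d\pi_e$ surjective; hence $d\pi_e$ maps $\gp$ isomorphically onto $T_{p_0}\mathcal M$ and kills $\gk$. This also closes part (ii): $\gk=\mathrm{Lie}(K)=\mathrm{Lie}(K_\sigma)$ together with $K\subseteq K_\sigma$ forces $(K_\sigma)_0\subseteq K$.

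The geodesic and parallel--transport assertions are the crux. For $X\in\gp$ we have $\theta X=-X$, so $\sigma(\exp tX)=\exp(-tX)$, i.e. $s\,(\exp tX)\,s=\exp(-tX)$; with $\gamma(t):=\exp(tX)[p_0]$ this gives $s(\gamma(t))=\gamma(-t)$, while $\exp(sX)$ translates the curve via $\gamma(t)\mapsto\gamma(t+s)$. Conjugating by $\exp(t_0X)$ and using naturality, $s_{\gamma(t_0)}=\exp(t_0X)\,s\,\exp(-t_0X)$, and one checks $s_{\gamma(t_0)}(\gamma(t_0+t))=\gamma(t_0-t)$: the symmetry at each of its points reverses $\gamma$ about that point, which characterizes $\gamma$ as the geodesic with initial velocity $d\pi_e X$. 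I would then identify the one-parameter group $\exp(tX)$ with the transvections $s_{\gamma(t/2)}s_{p_0}$ along $\gamma$ and conclude that $(d\exp tX)_{p_0}\colon T_{p_0}\mathcal M\to T_{\gamma(t)}\mathcal M$ is parallel transport. The main obstacle is precisely this last point: showing that these algebraically defined transvections realize \emph{Riemannian} parallel transport, equivalently that the canonical $G$-invariant connection on $G/K$ coincides with the Levi-Civita connection of the metric. This is where the Riemannian structure genuinely enters, and I expect to prove it by expressing the covariant derivative along $\gamma$ through the infinitesimal $\gp$-action and the bracket relations $[\gk,\gp]\subseteq\gp$, $[\gp,\gp]\subseteq\gk$, using the $s$-invariance of the metric to match the two connections.
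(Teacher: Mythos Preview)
The paper does not prove this theorem: it is quoted verbatim as background from Helgason's monograph \cite{dls} (Theorem~3.3, Chap.~IV), and no argument is supplied. So there is no ``paper's own proof'' to compare against.

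That said, your sketch is essentially the standard proof one finds in Helgason. The transitivity argument via completeness and midpoint symmetries, the normal-subgroup argument in (ii), the eigenspace computation from $\pi\circ\sigma=s\circ\pi$, and the characterization of $t\mapsto\exp(tX)[p_0]$ as a geodesic through the reflection property $s_{\gamma(t_0)}(\gamma(t_0+t))=\gamma(t_0-t)$ are all correct and are exactly Helgason's line of reasoning. Your honest flag on the parallel-transport step is well placed: identifying the transvections $\exp(tX)=s_{\gamma(t/2)}s_{p_0}$ with Riemannian parallel transport is precisely where one must invoke that the Levi--Civita connection of a $G$-invariant metric on $G/K$ coincides with the canonical connection determined by the reductive decomposition $\gg=\gk+\gp$ (equivalently, that $s_{p_0}$ being an isometric involution with $ds_{p_0}=-\id$ forces it to be affine for the Levi--Civita connection, so it commutes with parallel transport). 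Once that is in hand, $(d\exp tX)_{p_0}$ is parallel transport because each $s_{\gamma(\tau)}$ is an affine isometry reversing $\gamma$, hence preserves parallel vector fields along $\gamma$.
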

\begin{definition}\label{SP} Let $G$ be a connected Lie group and $H$ a closed subgroup. The pair $(G,H)$ is called a {\it symmetric pair}\index{symmetric pair} if there exists an involutive analytic automorphism $\sigma$ of $G$, briefly called an {\it involution}\index{involution},  such that 
\[
({\rm Fix}(\sigma))_0\subset H \subset{\rm Fix}(\sigma).
\]
If in addition the group $\Ad_G(H)$ is compact, then $(G,H)$ is called a {\it Riemannian symmetric pair}\index{Riemannian symmetric pair}.
\end{definition}
\begin{proposition}[Proposition~3.4 and Proposition~3.5, Chap.~IV, \cite{dls}]\label{SPSS} Let $(G,K)$ be a Riemannian symmetric pair, $\pi\colon G\to G/K$ the projection, $o=\pi(e)$. Let $\sigma$ be any involution of $G$ such that
$({\rm Fix}(\sigma))_0\subset K \subset{\rm Fix}(\sigma)$. In each $G$-invariant Riemannian structure $Q$ on $G/K$, and such $Q$ do exist, the manifold $G/K$ is a Riemannian globally symmetric space. 
The geodesic symmetry $\sigma_o$ satisfies 
\[
\sigma_o\circ\pi=\pi\circ\sigma,
\qquad
\tau(\sigma(g))=\sigma_o\tau(g)\sigma_o,
\]
where $\tau(g)\colon G/K\to G/K$ is the natural action of $g$, namely $\tau(g) xK=gxK$.
In particular $\sigma_o$ is independent of the choice of $Q$. Finally, if $\gz$ is the Lie algebra of the center of $G$ and $\gk\cap\gz=\{0\}$, then there exists exacly one involution $\sigma$  of $G$ such that
$({\rm Fix}(\sigma))_0\subset K \subset{\rm Fix}(\sigma)$. 
\end{proposition}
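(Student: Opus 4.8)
The plan is to handle the five claims—existence of a $G$-invariant metric, the global symmetry of $G/K$, the two identities for $\sigma_o$, the independence of $\sigma_o$ from $Q$, and the uniqueness of $\sigma$—by passing systematically to the infinitesimal picture. Set $\theta=\de\sigma_e$; it is an involutive automorphism of $\gg$ whose $+1$-eigenspace is exactly $\gk$ (the sandwich condition $({\rm Fix}(\sigma))_0\subset K\subset{\rm Fix}(\sigma)$ forces $K$ and ${\rm Fix}(\sigma)$ to share the Lie algebra $\gk$) and whose $-1$-eigenspace $\gp$ is carried isomorphically onto $T_o(G/K)$ by $\de\pi_e$.

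For existence of $Q$, I would first observe that for $k\in K\subset{\rm Fix}(\sigma)$ conjugation $C_k$ satisfies $\sigma\circ C_k=C_{\sigma(k)}\circ\sigma=C_k\circ\sigma$, so that $\Ad(k)$ commutes with $\theta$ and hence preserves $\gp$. As $\Ad_G(K)$ is compact by hypothesis, averaging any inner product on $\gp$ over $\Ad(K)$ yields an $\Ad(K)$-invariant one, and transporting it by the transitive $G$-action produces a well-defined $G$-invariant Riemannian structure $Q$ on $G/K$. Next I define $\sigma_o(gK)=\sigma(g)K$: it is well defined because $K\subset{\rm Fix}(\sigma)$, involutive because $\sigma^2=\id$, and fixes $o$. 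By construction $\sigma_o\circ\pi=\pi\circ\sigma$, and the one-line computation $\sigma_o(gxK)=\sigma(g)\sigma(x)K=\tau(\sigma(g))\,\sigma_o(xK)$ gives the relation $\tau(\sigma(g))=\sigma_o\,\tau(g)\,\sigma_o$.

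To see that $(G/K,Q)$ is globally symmetric I would check that $\sigma_o$ is an isometry with $o$ as an isolated fixed point. At $o$ its differential is $\theta|_\gp=-\id$, which preserves the inner product; writing an arbitrary point as $gK=\tau(g)o$ and using $\sigma_o=\tau(\sigma(g))\circ\sigma_o\circ\tau(g)^{-1}$ together with the $G$-invariance of $Q$ propagates the isometry property to every point. Since $\de(\sigma_o)_o=-\id$ has no nonzero fixed vector, passing through $\exp_o$ shows $o$ is an isolated fixed point, so $\sigma_o$ is the geodesic symmetry. As the map $\sigma_o$ was defined from $\sigma$ alone, with no reference to $Q$, yet coincides with the geodesic symmetry for every admissible $Q$, it is independent of $Q$.

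The genuine difficulty is the final uniqueness assertion. Given two involutions $\sigma,\sigma'$ with the sandwich property, both reproduce the unique geodesic symmetry $\sigma_o$, so $\pi\circ\sigma=\pi\circ\sigma'$ and hence $\beta:=\sigma'\circ\sigma$ acts trivially on $G/K$, with $\beta|_K=\id$ and $g^{-1}\beta(g)\in\bigcap_{h}hKh^{-1}$ for every $g$. Since an automorphism of the connected group $G$ is determined by its differential, it suffices to prove $\theta=\theta'$. Differentiating $\pi\sigma=\pi\sigma'$ gives $\mathrm{im}(\theta-\theta')\subset\gk$, so on $\gp$ one writes $\theta'=-\id+2T$ with $T\colon\gp\to\gk$ linear, and the previous containment forces $T(\gp)\subset\gn_0$, the Lie algebra of the ineffectivity kernel $\bigcap_h hKh^{-1}$, which is an ideal of $\gg$ contained in $\gk$. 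As such, $[\gn_0,\gp]\subset\gn_0\cap\gp=\{0\}$, so $\gn_0$ centralizes $\gp$; feeding $Y\in\gn_0$ into $\theta'[X,Y]=[\theta'X,\theta'Y]$ (where $[X,Y]=0$) then yields $[T(X),Y]=0$, i.e.\ each $T(X)$ centralizes $\gn_0$. Finally, fixing an $\Ad_G(K)$-invariant inner product on $\gg$, the operator $\ad(T(X))$ is skew-symmetric, annihilates $\gp$, and maps $\gk$ into $\gn_0$, which it kills, so $\ad(T(X))^2=0$; a skew-symmetric operator with vanishing square is zero, whence $T(X)\in\gk\cap\gz=\{0\}$ and $T=0$, giving $\theta=\theta'$ and $\sigma=\sigma'$. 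I expect this chain—turning the algebraic constraints on $T$ into $\ad(T(X))=0$ via skewness and nilpotency so that $\gk\cap\gz=\{0\}$ can be applied—to be the delicate point, the more so because the abelian model $\gg=\R^2$, $\gk=\R\times\{0\}$ shows that uniqueness genuinely fails once $\gk\cap\gz\neq\{0\}$.
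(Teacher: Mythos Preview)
The paper does not prove this proposition at all: it is quoted verbatim from Helgason's book (Propositions~3.4 and~3.5, Chap.~IV of \cite{dls}) and then used without further argument. So there is no ``paper's own proof'' to compare against; your proposal must be judged on its own merits.

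Your argument is correct and follows the classical line. The first four claims are handled cleanly. One small expository gap in the uniqueness part deserves comment: from $\pi\circ\sigma=\pi\circ\sigma'$ you only get $g^{-1}\beta(g)\in K$, not $g^{-1}\beta(g)\in\bigcap_h hKh^{-1}$. The stronger containment follows from the \emph{second} identity you already proved, applied to both $\sigma$ and $\sigma'$: since $\sigma_o=\sigma'_o$ (both equal the unique geodesic symmetry), one has $\tau(\sigma(g))=\sigma_o\tau(g)\sigma_o=\tau(\sigma'(g))$, hence $\tau(\beta(g))=\tau(g)$, so $\beta(g)g^{-1}$ lies in the kernel of the $G$-action on $G/K$, which is exactly $N=\bigcap_h hKh^{-1}$. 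You state the conclusion but your phrase ``$\beta$ acts trivially on $G/K$'' (meaning $\pi\circ\beta=\pi$) is not by itself enough; you need $\tau\circ\beta=\tau$. Once $T(\gp)\subset\gn_0$ is secured, the remainder of your chain---$\gn_0$ centralizes $\gp$, $T(X)$ centralizes $\gn_0$, hence $\ad(T(X))$ is skew with $\ad(T(X))^2=0$, forcing $T(X)\in\gk\cap\gz=\{0\}$---is valid and rather elegant.
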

The previous two results may be condensed in the statement that there is a bijective correspondence between Riemannian globally symmetric spaces and Riemannian symmetric pairs.

\subsection{Types of Symmetric Spaces}\label{sec:typessymmspaces}

The next step in the general theory of symmetric spaces is to look at the Lie algebra level. This is suggested by Theorem~\ref{SP}, which  shows that a Riemannian   globally symmetric space gives rise to a pair $(\gg, s)$, where $s=\de\sigma_e$, that satisfies
\begin{enumerate}
\item[(i)] $\gg$ is a real Lie algebra;
\item[(ii)] $s$ is an involutive automorphism of $\gg$;
\item[(iii)] the fixed points $\gk$ of $s$ form a Lie algebra  compacly contained in $\gg$,
\end{enumerate}
where (iii) holds because $K$ is compact (see Chap.~II in \cite{dls} for the definition of compactly embedded  Lie subalgebra).
\\
A pair $(\gg, s)$ satisying (i), (ii), and (iii) above is called an {\it orthogonal symmetric Lie algebra}\index{orthogonal symmetric Lie algebra}. If in addition
\begin{enumerate}
\item[(iv)] $\gk\cap\gz=\{0\}$,
\end{enumerate}
then $(\gg, s)$ is called {\it effective}\index{effective!orthogonal symmetric Lie algebra}. Fix such a pair and consider the decomposition $\gg=\gu+\gge$  into the $+1$ and $-1$ eigenspaces with respect to $s$. Motivated by the 
 important decomposition result stated below in Theorem~\ref{DEC}, one introduces the following terminology:
\begin{enumerate}
\item[(a)] if $\gg$ is compact and semisimple, then $(\gg, s)$ is said to be of the {\it compact type}\index{compact type!orthogonal symmetric Lie algebra};
\item[(b)]  if $\gg$ is noncompact and semisimple and if $\gg=\gu+\gge$ is a Cartan decomposition, then $(\gg, s)$ is said to be of the {\it noncompact type}\index{noncompact type!orthogonal symmetric Lie algebra};
\item[(c)] if $\gge$ is an Abelian ideal in $\gg$, then $(\gg, s)$ is said to be of the {\it Euclidean type}\index{Euclidean type!orthogonal symmetric Lie algebra}.
\end{enumerate}

\begin{theorem}[Theorem~1.1, Chap.~V, \cite{dls}]\label{DEC} Suppose that $(\gg, s)$ is an effective  
orthogonal symmetric Lie algebra.
Then there exist ideals $\gg_0$, $\gg_-$ and $\gg_+$ such that 
\begin{enumerate}
\item[(i)] $\gg=\gg_0+\gg_-+\gg_+$, a Lie algebra direct sum;
\item[(ii)] $\gg_0$, $\gg_-$ and $\gg_+$ are invariant under $s$ and orthogonal with respect to the Killing form;
\item[(iii)] the pairs $(\gg_0,s_0)$,  $(\gg_+,s_+)$ and  $(\gg_-s_-)$ are effective  orthogonal symmetric Lie algebras of the Euclidean, compact and noncompact type, respectively.
\end{enumerate}
\end{theorem}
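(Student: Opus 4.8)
\noindent\emph{Strategy.} The plan is to read off the three factors from the Killing form $B$ of $\gg$, which behaves well because $s$ is an automorphism: from $B(sX,sY)=B(X,Y)$ the eigenspaces $\gu$ and $\gge$ are $B$-orthogonal. First I would record the rigidity coming from condition (iii). Since $\gk=\gu$ is compactly embedded, there is an $\ad(\gu)$-invariant inner product on $\gg$, so every $\ad X$ with $X\in\gu$ is skew and hence $B(X,X)=\tr((\ad X)^2)\le 0$, with equality exactly when $\ad X=0$, i.e. $X\in\gz$. Effectiveness (iv), namely $\gu\cap\gz=\{0\}$, then upgrades this to the decisive fact that $B$ is negative definite on $\gu$.

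Next I would isolate the Euclidean directions. Let $\gr=\{X\in\gg:B(X,\gg)=0\}$ be the radical of $B$; it is an ideal, and it is $s$-invariant since $B$ is, so $\gr=(\gr\cap\gu)\oplus(\gr\cap\gge)$. Negative definiteness on $\gu$ forces $\gr\cap\gu=\{0\}$, whence $\gr\subset\gge$. Because $[\gge,\gge]\subset\gu$ while $[\gr,\gr]\subset\gr$, we get $[\gr,\gr]\subset\gu\cap\gr=\{0\}$, so $\gr$ is an \emph{abelian} ideal lying in $\gge$; this will be the $\gge$-part of the Euclidean factor $\gg_0$.

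The remaining decomposition is governed by the signature of $B$ on $\gge$: writing $\gge=\gge_-\oplus\gge_+\oplus\gr$ with $B$ positive definite on $\gge_-$ and negative definite on $\gge_+$, the noncompact factor $\gg_-$ should be the ideal generated by $\gge_-$ (with $[\gge_-,\gge_-]\subset\gu$ supplying its compactly embedded part, so that $B$ has mixed signature there — negative on $\gu$, positive on $\gge_-$ — exactly a Cartan decomposition), and the compact factor $\gg_+$ the ideal generated by $\gge_+$ (on which $B$ is negative throughout). I would make this precise by reducing to the semisimple situation: split off $\gg_0$, the largest ideal of Euclidean type, so that a complementary ideal $\gg_s$ is semisimple; on $\gg_s$ the involution $s$ restricts to a genuine Cartan involution, $\gg_s$ is a direct sum of simple ideals, and collecting the compact simple ideals into $\gg_+$ and the noncompact ones into $\gg_-$ finishes the split. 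The $B$-orthogonality of the three pieces, together with the definiteness of $B$ on each $\gu$-part, then forces every cross-bracket to vanish, so the sum is a direct sum of mutually commuting $s$-invariant ideals, and $\gu\cap\gz=\{0\}$ is inherited by each factor, giving effectiveness.

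The hard part is precisely the construction of $\gg_0$ and the proof that its complement is a semisimple ideal. The difficulty is an \emph{absorption} phenomenon: a compact semisimple piece of $\gu$ that acts nontrivially on the abelian radical $\gr$ — as $\sodr$ acts on $\R^d$ in the Euclidean motion algebra — is \emph{not} a direct-summand ideal and must be grouped into $\gg_0$, even though in the quotient $\gg/\gr$ it looks like a compact factor. Thus one cannot simply pass to $\gg/\gr$; instead I would characterise $\gg_0$ intrinsically, as the sum of $\gr$ with the directions failing to commute with it, and prove directly, using the invariant inner product and the definiteness of $B$ on $\gu$, that the $B$-nondegenerate complement closes up into a semisimple ideal commuting with $\gg_0$. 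Once this reduction is secured, the compact/noncompact split is the standard decomposition of a semisimple Lie algebra with Cartan involution into its simple ideals.
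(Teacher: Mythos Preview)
The paper does not supply a proof of this theorem: it is quoted verbatim from Helgason's monograph \cite{dls} as structural background, and the text immediately proceeds to commentary (``The above result is of course of central importance\ldots'') without any argument. There is therefore no paper proof to compare your proposal against.

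That said, your sketch is a faithful outline of the standard argument one finds in \cite{dls}. The key observations --- that $B$ is negative definite on $\gu$ by effectiveness, that the radical of $B$ is an abelian ideal contained in $\gge$, and that the semisimple complement then splits into its compact and noncompact simple ideals --- are all correct and in the right order. You are also right to flag the absorption issue: a compact subalgebra of $\gu$ acting nontrivially on $\gr$ must be assigned to $\gg_0$, and this is exactly why one cannot naively pass to $\gg/\gr$. Your proposed intrinsic characterisation of $\gg_0$ as $\gr$ together with the part of $\gu$ not centralising $\gr$ is on the right track, though in a fully detailed proof one typically argues via the Levi decomposition (or directly via $B$-orthogonal complements inside $\gu$, using that $B$ is nondegenerate there) to produce the semisimple complementary ideal and verify it commutes with $\gg_0$.
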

The involutions $s_0$, $s_-$ and $s_+$ are those that arise by restricting $s$ to the corresponding ideals.
The above result is of course of central importance because it allows to study separately the various cases. Clearly,  the  decomposition yields a corresponding decomposition of a symmetric space and thus induces the notions of symmetric space of Euclidean, compact and noncompact types. The Euclidean space, the sphere and the unit disk, introduced in Sect.~~\ref{RGSS}, are the prototypical examples of such spaces. There is a remarkable duality between compact and noncompact types in which we are not interested. We content ourselves with mentioning that the compact types have positive sectional curvature and the noncompact ones have negative sectional curvature.

Since we are only interested in noncompact globlally symmetric spaces, we focus on the corresponding structural assumptions. To this end, we need yet another piece of terminology and we also slightly change the current notation to tune into the noncompact case. Any pair $(G,K)$ where $G$ is a connected Lie group with Lie algebra $\gg$ and where $K$ is a Lie subgroup of $G$ with Lie algebra $\gk$ is said to be associated to the (effective) orthogonal symmetric Lie algebra $(\gg, \theta)$, and will be called of the noncompact type if such is  $(\gg, \theta)$. Thus, from now on we fix 
an effective orthogonal symmetric Lie algebra $(\gg, \theta)$ of the noncompact type, so that the eigenspace decomposition relative to $\theta$, namely $\gg=\gk+\gp$, is a Cartan decomposition. The next result is a cornerstone in the theory.
\begin{theorem}[Theorem~1.1, Chap.~VI, \cite{dls}]\label{NCSS}  With the notation above, suppose that $(G,K)$ is any pair associated with the effective orthogonal symmetric Lie algebra  of the noncompact type $(\gg, \theta)$. Then:
\begin{enumerate}
\item[(i)] $K$ is connected, closed and contains the center $Z$ of $G$. Moreover, $K$ is compact if and only if $Z$ is finite. In this case, $K$ is a maximal compact subgroup of $G$;
\item[(ii)] there exists an involutive analytic automorphism $\Theta$ of $G$ whose fixed point set is $K$ and whose differential at the identity $e\in G$ is $\theta$; the pair $(G,K)$ is a Riemannian symmetric pair;
\item[(iii)]  the mapping $\varphi\colon (X,k)\mapsto(\exp X)k$ is a diffeomorphism of $\gp\times K$ onto $G$ and the mapping $\Exp$ is a diffeomorphism of $\gp$ onto the globally symmetric space $G/K$.
\end{enumerate}
\end{theorem}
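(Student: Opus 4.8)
The plan is to transport the classical polar decomposition of $GL(\gg)$ to $G$ through the adjoint representation. First I would equip $\gg$ with the inner product $B_\theta$ and record the basic identity $(\ad X)^*=-\ad(\theta X)$ for the $B_\theta$-adjoint, so that $\ad X$ is self-adjoint when $X\in\gp$ and skew-adjoint when $X\in\gk$. Hence $\Ad(\exp X)=e^{\ad X}$ is a positive-definite symmetric operator for $X\in\gp$ and an orthogonal operator for $X\in\gk$. This identifies the linear Cartan involution $g\mapsto(g^*)^{-1}$ of $GL(\gg)$ with the integral of $\ad\circ\theta$ and shows that the closed connected subgroup $\Int(\gg)=\Ad(G)$ is stable under $*$, hence admits a polar decomposition $\Int(\gg)=\exp(\ad\gp)\cdot\bigl(\Int(\gg)\cap O(\gg)\bigr)$.

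For (iii), I would first show that $\exp\colon\gp\to G$ is a diffeomorphism onto a closed subset: since $\gg$ is semisimple, $\ad$ is faithful, and because $S\mapsto e^{S}$ is a diffeomorphism from symmetric to positive-definite symmetric operators, $X\mapsto e^{\ad X}=\Ad(\exp X)$ is a diffeomorphism of $\gp$ onto a closed submanifold; injectivity and regularity of $\exp$ on $\gp$ follow, and the induced map $\Exp\colon\gp\to G/K$ inherits them. For surjectivity of $\varphi(X,k)=(\exp X)k$, given $g\in G$ I would polar-decompose $\Ad g=p\,u$ in $GL(\gg)$ with $p$ positive-definite symmetric and $u$ orthogonal. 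The crucial point is that $p$ is again of the form $e^{\ad X}$ with $X\in\gp$: since $\Int(\gg)$ is $*$-stable we have $p=(\Ad g\,(\Ad g)^{*})^{1/2}\in\Int(\gg)$, so $\log p$ is a symmetric element of the Lie algebra $\ad\gg$ of $\Int(\gg)$; as every derivation of a semisimple Lie algebra is inner, $\log p=\ad X$ for a unique $X$, and self-adjointness forces $\theta X=-X$, i.e. $X\in\gp$. Then $g(\exp X)^{-1}$ has orthogonal $\Ad$-image, and uniqueness of the linear polar decomposition simultaneously yields injectivity of $\varphi$ (the factors $\exp X$ and $k$ are determined by $g$) and shows that the residual factor lies in $K_1=\{g\in G:\Ad g\in O(\gg)\}$, which I then identify with $K$.

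To finish I would define $\Theta\colon G\to G$ by $\Theta((\exp X)k)=(\exp(-X))k$, using the decomposition (iii) to transport $\theta$; this is the unique automorphism of $G$ with differential $\theta$, and its fixed-point set is exactly $K$, since $\exp(-X)k=(\exp X)k$ forces $\exp(2X)=e$, hence $X=0$ by injectivity of $\exp$ on $\gp$. This gives (ii) and the Riemannian symmetric pair assertion. For (i): $K=G^{\Theta}$ is closed, and is connected because $G$ and $\gp$ are connected while $G\cong\gp\times K$; the center $Z$ lies in $K$ because $\Ad z=\id$ and uniqueness of the polar decomposition of the identity forces the $\gp$-component of $z$ to vanish; finally $\Ad K$ is a closed subgroup of the compact group $O(\gg)$ and $K\to\Ad K$ has kernel $Z$, so $K$ is compact precisely when $Z$ is finite, and in that case its maximality follows because $\exp\gp$ contains no nontrivial element of a compact subgroup.

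The main obstacle is the surjectivity half of (iii) together with the identification $K_1=K$: proving that the positive-definite factor of the polar decomposition of $\Ad g$ is genuinely $e^{\ad X}$ with $X\in\gp$ — and not merely an abstract positive operator — is exactly where semisimplicity enters, through the faithfulness of $\ad$ and the innerness of derivations; the remaining assertions are essentially bookkeeping built on uniqueness of the polar decomposition and on connectedness.
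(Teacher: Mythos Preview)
The paper does not supply its own proof of this theorem: it is quoted verbatim from Helgason's monograph (Theorem~1.1, Chap.~VI of \cite{dls}) as part of the background exposition in $\S$~\ref{sec:typessymmspaces}, and the surrounding text moves directly from the statement to a remark about $\Exp$ and then to Assumption~1. So there is nothing in the present paper to compare your argument against.

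That said, your sketch is a faithful outline of the classical proof as it appears in \cite{dls}: one transports the linear polar decomposition of $GL(\gg)$ through the adjoint representation, using the $B_\theta$-adjoint identity $(\ad X)^*=-\ad(\theta X)$ to see that $\Int(\gg)$ is $*$-stable, and then reads off the global Cartan decomposition $G\cong\gp\times K$ and the automorphism $\Theta$. The only point worth tightening is the identification $K=K_1:=\{g\in G:\Ad g\in O(\gg,B_\theta)\}$: in Helgason's conventions a Lie (analytic) subgroup is connected by definition, so once the polar decomposition shows $K_1$ is connected with Lie algebra $\gk$, uniqueness of the connected subgroup with a given Lie algebra forces $K=K_1$; you should make that step explicit rather than leaving it as ``which I then identify with $K$''. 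With that caveat, your plan matches Helgason's argument and would go through.
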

The exponential mapping $\Exp$ in item~(iii) above, quoted for completeness, is just the Riemannian exponential mapping (see for instance~\cite{dls}) and will play no explicit role in what follows.
\vskip0.2truecm
{\bf Assumption 1.}
From now on, let $G$ be a connected semisimple Lie group with finite center and $X=G/K$ the associated symmetric space of the noncompact type, where $K$ is a maximal compact subgroup of $G$. We also fix an Iwasawa decomposition $G=KAN$ and we denote by $M$ the centralizer of $A$ in $K$.


\subsection{Boundary of a Symmetric Space}\label{sec:boundary} Our basic example of noncompact symmetric space will be the unit disk $\DD$, which has a rather obvious (topological) boundary, namely the unit circle $S^1=\{z\in\C:|z|=1\}$.
The notion of boundary of a symmetric space is highly non-trivial.  For a deep study on the matter, the reader is referred to the classical paper of Furstenberg \cite{furst} in which a detailed motivation of Definition~\ref{bdry} below may be found. For our purposes, some heuristics and some basic observations will suffice.

Notice first that the M\"obius action of $G=\Suone$ on $\C$ has precisely three orbits, namely $\DD$, $S^1$ 
and the complement $\{w\in\C:|w|>1\}$. We already know that $\DD$ is an orbit. 
Further, $AN$ fixes $1$ (easy to check) and $K$ moves it along the unit circle, so that the $G$-orbit of $1$ is $S^1$. 
Finally, for $\rho>1$ the formula
\begin{equation}\label{Kone}
k_{\theta/2}\cdot\rho=\begin{bmatrix}\E^{i\theta/2}&0\\0&\E^{-i\theta/2}\end{bmatrix}\cdot\rho=
\rho\cos\theta+i\rho\sin\theta,
\end{equation}
shows that $K$ maps the point $\rho$ along the circle of radius $\rho$  and any such real point may be reached, say, from $2$ by means of $A$ because for $t>0$ the real numbers
\[
a_t[2]=
\begin{bmatrix}\cosh t&\sinh t\\\sinh t&\cosh t\end{bmatrix} [2]=\frac{2+\tanh t}{2\tanh t+1}
\]
span the half-line $(1,+\infty)$. Thus the set $\{w\in\C:|w|>1\}$ is an orbit.

Let's go back to the unit circle. As already noticed, $AN$ fixes $1$ and $K$ moves it along the circle, as can also be deduced from \eqref{Kone} when $\rho=1$. The very same formula shows also that the elements
$k_{\theta/2}$ when $\theta$ is any multiple of $2\pi$ fix $1$. These are $\pm I$, namely the elements  of $M$, the centralizer of $A$ in $K$. Therefore, the stabilizer of $1$ is the group $P=MAN$ and 
$S^1\simeq G/P$. By means of the Iwasawa decomposition we may write 
\[
S^1\simeq KAN/MAN
\]
and the natural question arises whether this is the same as $K/M$ or not. In the case at hand this is quite clearly so because $K$ acts transitively with isotropy $M$. This actually holds more generally in the sense that
\[
G/P= KAN/MAN\simeq K/M.
\]
Indeed, $K$ acts on the coset space $G/P$ in the natural fashion $k\cdot gP=(kg)P$ and 
by the  Iwasawa decomposition $k\in P=MAN$ if and only if $k\in M$. Hence the isotropy at the coset $\{P\}$ is $M$. Further, again by the Iwasawa decomposition, the action is transitive, and we conclude that $G/P\simeq K/M$. The reverse point of view (that of $G$ acting on $K/M$ with isotropy $P$) will be illustrated below in~\eqref{GonB}, where the explicit action of $G$ on $K/M$ is given.

\begin{definition}\label{bdry} The  {\it boundary}\index{boundary} of  $X$ is the coset space $B:=K/M$.
\end{definition}

We remark here {\it en passant} that $M$, which will play an important role below, normalizes $N$, that is
\begin{equation}\label{MnormN}
mNm^{-1}=N, \qquad m\in M.
\end{equation}
To see this, look at the Lie algebra level. If $\alpha$ is a positive root and $X\in\gg_\alpha$, then for
every $H\in\ga$ it is
\[
[H,\Ad m X]=\Ad m[\Ad m^{-1}H,X]=\Ad m[H,X]=\alpha(H)\Ad mX,
\]
so that $\Ad m(\gg_\alpha)\subset \gg_\alpha$. An other normalization property that involves $N$  is that 
for any $\alpha\in A$ and any $\nu\in N$ it holds 
\begin{equation}\label{saN}
\alpha\nu aN=aN\alpha\nu.
\end{equation} This, in turn,  follows from choosing $\nu'\in N$ such that $\nu'\alpha=\alpha\nu$, which gives
\[
\alpha\nu aN=\alpha aa^{-1}\nu aN
=\alpha aN\alpha^{-1}\alpha
=a\alpha N\alpha^{-1}\alpha
=aN\alpha
=aN\nu'\alpha
=aN\alpha\nu.
\]

\subsection{Changing the Reference Point}\label{CRP}
In what follows, it will be useful to change the reference point of both the symmetric space $X$ 
and its boundary. Although conceptually very well known and somehow trivial, the actual explicit determination of what happens when doing so is not to be found in the literature, to the best of our knowledge.
 In order to see how the various decompositions are affected by changing the origin of our spaces, it is convenient to introduce Borel sections and occasionaly adopt a slightly different notation for the  (various) $G$-actions.

The action of $G$ on $X=G/K$ will be written $g[x]$, namely
\[
g[x]=g[hK]=ghK.
\]
 For any fixed $x_0\in X=G/K$, a  \emph{Borel section}\index{Borel section} relative to $x_0$ is a measurable map  $s_{x_0}\colon X\to G$ satisfying  $s_{x_0}(x)[x_0]=x$
and $s_{x_0}(x_0)=e$, with $e$ the neutral element of $G$. Borel sections always
exist since $G$ is second countable, see Theorem~5.11 in \cite{varadarajan85}. 
 
  We next show how, in the present context, a Borel section associated to $o=eK\in G/K$ can be determined quite explicitly. Since $K$ is the isotropy subgroup of $G$ at $o$, the map $\beta\colon gK\mapsto g[o]$ is a diffeomorphism of $G/K$ onto $X$. Furthermore, by the Iwasawa decomposition of $G$ (Theorem~\ref{ID}), each element of $g\in G$ can be written as the product $g=nak$ for exactly one triple $(n,a,k)\in N\times A\times K$, and the correspondence $(n,a,k)\leftrightarrow nak$ is a diffeomorphism with $G$. Hence each class in $G/K$ has a representative of the form $naK$ with unique $a\in A$ and $n\in N$, so that the mapping $\psi\colon G/K\rightarrow NA$ given by $naK\mapsto na$ is a diffeomorphism. It follows that the measurable, actually smooth, map
	\begin{equation*}
	\psi\circ\beta^{-1}\colon X\longrightarrow NA
	\end{equation*}
is a Borel section. Indeed, $\psi\circ\beta^{-1}(o)=\psi(K)=e$ and, by construction, for every $x\in X$, it holds $\psi\circ\beta^{-1}(x)[o]=x$ . From now on, we will denote by $s_o$ the Borel section $\psi\circ\beta^{-1}$ with image $NA\subseteq G$.
 
 
 Fix now $x\in X$ and let $K_x$ be the isotropy of $G$ at $x\in X$. Evidently,
 \[
 K_x=s_o(x)Ks_o(x)^{-1}.
 \]
It is then possible to write an Iwasawa decomposition w.r.t. the subgroup $K_x$. In fact,
\[
G=s_o(x)Gs_o(x)^{-1}=s_o(x)KANs_o(x)^{-1}=s_o(x)Ks_o(x)^{-1}AN=K_xAN,
\]
because, as observed earlier, $s_o(x)\in AN$. By using the same approach, one obtains the various versions of the Iwasawa decomposition where the factors appear in a different order. It is worth  observing that the subgroups $A$ and $N$ are independent of the maximal compact subgroup $K_x$, but the individual factors appearing in the decomposition of a fixed element $g\in G$ are not. Given $g\in G$, we denote with $H_x(g)$, $A_x(g)$ the elements of $\mathfrak{a}$ uniquely determined by
\begin{equation}\label{IWAx}
g\in K_x\exp H_x(g) N,\qquad g\in N\exp A_x(g) K_x
\end{equation}
and by $\kappa_x(g)$ the unique element in $K_x$ such that $g\in\kappa_x(g)AN$. Clearly,
\begin{equation}\label{eq:fondAH}
A_x(g^{-1})=-H_x(g).
\end{equation}

Once the point $x\in X$ has been fixed, a Borel section $s_x\colon X\rightarrow G$ can also be fixed, so  that for every $y\in X$, $s_x(y)[x]=y$ and $s_x(x)=e$. As before, it may be arranged that $s_x(y)\in NA=AN$. 
Also, we denote by $M_x$ the centralizer of $A$ in $K_x$, so that $M_x= s_{o}(x) M s_{o}(x)^{-1}$.
The following technical observation will be useful below.
\begin{lemma}\label{kappas} For any $x\in X$ it is
\begin{enumerate}
\item[(i)] $\kappa_{o}\circ{\kappa_x}\bigl|_{K}=id_K$; in particular, if $k_x=\kappa_x(k)$ for some $k\in K$, then $k=\kappa_o(k_x)$;
\item[(ii)] $\kappa_{x}\circ{\kappa_o}\bigl|_{K_x}=id_{K_x}$.
\end{enumerate}
\end{lemma}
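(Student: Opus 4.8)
The plan is to exploit that $\kappa_o\colon G\to K$ and $\kappa_x\colon G\to K_x$ are nothing but the compact-factor projections of two Iwasawa decompositions, $G=KAN$ and $G=K_xAN$, that share the \emph{same} $A$ and $N$ — the second decomposition being available precisely because $s_o(x)\in AN$, as shown just before the statement. The single structural fact I would isolate first is that each projection depends only on the right coset $gAN$: if $g=\kappa_o(g)an$ with $a\in A$, $n\in N$, then for any $p\in AN$ one has $gp=\kappa_o(g)(anp)$ with $anp\in AN$, so $\kappa_o(gp)=\kappa_o(g)$, and the same holds for $\kappa_x$. Here I only use that $AN$ is a group, so that products and inverses of its elements stay in $AN$ (recall $NA=AN$), and the uniqueness of the compact factor. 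Note also that $\kappa_o(k)=k$ for $k\in K$ and $\kappa_x(k')=k'$ for $k'\in K_x$, since then the $AN$-factor is trivial.

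For part (i), I would take $k\in K$ and write its Iwasawa decomposition relative to $K_x$, namely $k=k_x\,an$ with $k_x=\kappa_x(k)\in K_x$, $a\in A$, $n\in N$. Rearranging gives $k_x=k(an)^{-1}=k\,n^{-1}a^{-1}$, so $k_x$ and $k$ lie in the same right $AN$-coset. By the coset-invariance above and $\kappa_o(k)=k$ we get $\kappa_o(k_x)=\kappa_o(k)=k$, that is $\kappa_o(\kappa_x(k))=k$. The stated reformulation (if $k_x=\kappa_x(k)$ then $k=\kappa_o(k_x)$) is just a restatement of this identity.

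Part (ii) is entirely symmetric. For $k'\in K_x$ put $k_o=\kappa_o(k')$, so that $k'=k_o\,a'n'$ with $k_o\in K$, $a'\in A$, $n'\in N$; then $k_o=k'(a'n')^{-1}$ lies in the same right $AN$-coset as $k'$, and coset-invariance of $\kappa_x$ together with $\kappa_x(k')=k'$ yields $\kappa_x(k_o)=\kappa_x(k')=k'$, i.e. $\kappa_x(\kappa_o(k'))=k'$.

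I do not expect a genuine obstacle: once the two Iwasawa decompositions are known to share $A$ and $N$, the argument is purely formal. The only points that require care are that $AN$ is a group — so that the residual factor $(an)^{-1}$ returns to $AN$ and right multiplication by $AN$ does not disturb the compact projection — and the uniqueness of that projection in each decomposition. If one prefers a more conceptual packaging, the same content follows from the single intertwining identity $\kappa_x(sgs^{-1})=s\,\kappa_o(g)\,s^{-1}$ with $s=s_o(x)$, obtained by conjugating $g=\kappa_o(g)an$ by $s\in AN$ and using $sKs^{-1}=K_x$ and $s\,(an)\,s^{-1}\in AN$; but the coset-invariance argument above is shorter and already suffices.
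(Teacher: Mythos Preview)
Your proof is correct and follows essentially the same approach as the paper: write $k=\kappa_x(k)\,an$, rearrange to $\kappa_x(k)=k(an)^{-1}\in KAN$, and read off $\kappa_o(\kappa_x(k))=k$ from the uniqueness of the $K$-factor, with (ii) handled symmetrically. Your explicit isolation of the coset-invariance principle $\kappa_o(gp)=\kappa_o(g)$ for $p\in AN$ is a nice clarification, but it is exactly what the paper's terser argument is using.
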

\begin{proof} We start by proving (i). Let $k\in K$. Then according to the Iwasawa decomposition $K_xAN$ it is $k=\kappa_x(k)an$,
	that is $\kappa_x(k)=k(an)^{-1}\in KAN$. So that $\kappa_o(\kappa_x(k))$ is precisely $k$, as desired.
	The proof of (ii) is analogous.
%
\end{proof}

The action of $G$ on the boundary $B=K/M$ is induced by the decomposition $G/P=KAN/MAN$ in the sense that if $g\in G$ and $kM\in B$ then 
\begin{equation}\label{GonB}
g\langle kM\rangle:=\kappa_{o}(gk)M.
\end{equation}
Consider now the action of $K_x$. By the definition~\eqref{GonB} and by item~(i) in Lemma~\ref{kappas}, for any $k\in K$ it is
\begin{equation*}
\kappa_x(k)\langle M\rangle=\kappa_{o}(\kappa_x(k))M=kM. 
\end{equation*}
Thus the action of $K_x$ on the boundary is transitive. 
Next, observe that an element $k_x=s_o(x)ks_o(x)^{-1}$ stabilizes $M\in K/M$ if and only if $\kappa_o(s_o(x)ks_o(x)^{-1})\in M$, which means $s_o(x)k\in MAN$. This, together with the fact that $M$ normalizes $AN$, implies that $k\in M$, hence 
$k_x\in M_x$. Therefore the isotropy group of $K_x$ at $M$ is $M_x$. This shows that the map induced by $\kappa_o$ on $K_x/M_x$, which we denote $\kappa_{x,o}$, namely
\begin{equation}\label{identification}
\kappa_{x,o}:K_x/M_x\to K/M,
\qquad
k_xM_x\mapsto\kappa_{x,o}(k_xM_x):=\kappa_o(k_x)M,
\end{equation}
is a diffeomorphism.
Furthermore, $kM$ and $\kappa
_x(k)M_x$ determine the same boundary point, because by \eqref{identification} $\kappa_o(\kappa_x(k))M=kM$. By Lemma~\ref{kappas} the inverse of $\kappa_{x,o}$ is the map
\[\kappa_{o,x}:K/M\to K_x/M_x,
\qquad
kM\mapsto\kappa_{o,x}(kM):=\kappa_x(k)M_x.\]
%


\subsection{Horocycles}\label{sec:horo}

A hyperplane in $\R^n$ is orthogonal to a family of parallel lines. What is a reasonable analogue of this in, say, Riemannian geometry? Since geodesics  are very natural generalizations of lines, a possible answer is given by a manifold that is orthogonal to families of parallel geodesics. In the context of symmetric spaces, such manifolds will be called {\it horocycles}, sometimes also {\it horospheres}. 

\vskip0.2truecm
Let us see what this idea leads to in the context of the unit disk, our basic example of noncompact symmetric space.
The origin in $\DD$ will be denoted $o$. If $\gamma\colon[a,b]\to\DD$ is a smooth curve with $\gamma(a)=o$ and $\gamma(b)=x\in(-1,1)$ is a point on the real axis, then the simple inequality
\[
\frac{\dot x(t)^2}{(1-x(t)^2)^2}\leq\frac{\dot x(t)^2+\dot y(t)^2}{(1-x(t)^2-y(t)^2)^2}
\]
shows that straight real  lines through the origin are geodesics. We observe {\it en passant} that since $\gamma_0(t)=(tx,0)$ with $t\in[0,1]$ is such a straight line, then
\[
d(o,x)=L(\gamma_0)=\int_0^1 \frac{|x|}{1-t^2|x|}\,\de t=\frac{1}{2}\log\frac{1+|x|}{1-|x|}.
\]
As we know,  $G=\Suone$ acts by isometries via the M\"obius action on $\DD$. Such maps are conformal and map circles and lines into circles and lines. Hence the geodesics in $\DD$ are circular arcs perpendicular to the boundary $|z|=1$. All circular arcs perpendicular to the same point at the boundary may be seen as parallel lines and thus a natural notion of horocycle in this context is that of circle tangent to the boundary (less the point on $S^1$) because such a circle is of course perpendicular to all the above parallel geodesics. 

The circle through the origin and tangent to the boundary at $1\in\C$ is therefore the prototype of horocycle. 
Observe that
\[
n_s[o]=\begin{bmatrix}1+is&-is\\is&1-is\end{bmatrix}[o]
=\frac{-is}{1-is}=\frac{s}{s+i}=\frac{s^2}{s^2+1}-i\frac{s}{s^2+1}
\]
and an easy calculation shows that these are precisely the points on the circle of radius $1/4$ centered at $1/2\in\C$ that are contained in $\DD$. Furthermore, as $s\to\pm\infty$ one gets  the boundary point $b_0=1\in\C$. We have obtained the basic horocycle, which will be denoted $\xi_o$, as the $N$-orbit $N[o]$.  
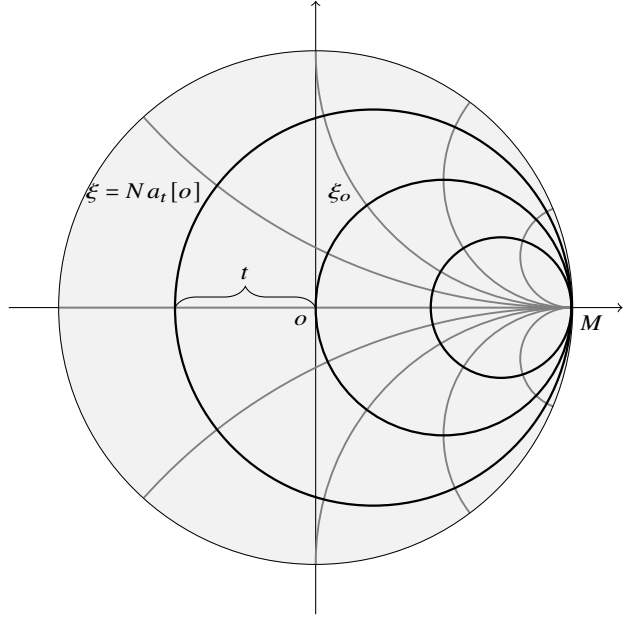
\begin{figure}[htbp]
\begin{center}
	\begin{tikzpicture}[scale=1.7]
	\draw[fill=gray,opacity=0.1] (0,0) circle (2.01cm);
	\draw[->] (0,-2.4)--(0,2.4);
	\draw[->] (-2.4,0)--(2.4,0);
	\draw [decorate,decoration={brace,amplitude=8pt}] (-1.10,0) -- (0,0) node [black,midway,yshift=13pt] {$t$};
	\draw[gray,line width=0.25mm] (2,0) arc (270:142.9:1cm);
	\draw[gray,line width=0.25mm] (2,0) arc (270:110.5:0.4cm);
	\draw[gray,line width=0.25mm] (2,0) arc (90:249.5:0.4cm);
	\draw[gray,line width=0.25mm] (-2.01,0) -- (2.01,0);
	\draw[gray,line width=0.25mm] (2,0) arc (90:217.5:1cm);
	\draw[gray,line width=0.25mm] (2,0) arc (270:179.7:2cm);
	\draw[gray,line width=0.25mm] (2,0) arc (90:180.3:2cm);
	\draw[gray,line width=0.25mm](2,0) arc (90:138:4.5cm);
	\draw[gray,line width=0.25mm](2,0) arc (270:222:4.5cm);
	\draw[line width=0.3mm] (1,0) circle (1cm);
	\draw[line width=0.3mm] (1.45,0) circle (0.55cm);
	\draw[line width=0.3mm] (0.45,0) circle (1.55cm);
	\draw[line width=0.1mm] (0,0) circle (2.01cm);
	\node[anchor=east] at (-0.82,0.9) 	{$\xi=Na_t[o]$};
	\node[anchor=north east] at (-0,0) 	{$o$};
	\node[anchor=east] at (0.35,0.9) 	{$\xi_o$};
	\node[anchor=north west] at (2,0) 	{$M$};
	\end{tikzpicture}
	\caption{The basic horocycle  $\xi_o$ in the unit disc and the horocycle $\xi$ tangent to the boundary at $1$ and with distance $-t$ from the origin $o$. In gray, the sheaf of parallel geodetics perpendicular to $\xi_o$ and $\xi$.}
	\label{fig:horo}
\end{center}
\end{figure} 

Other horocycles tangent to $b_0$ are the orbits $Na_t[o]=a_tN[o]$ where of course
\[
a_t=\begin{bmatrix}\cosh t&\sinh t\\\sinh t&\cosh t\end{bmatrix} 
\]
is any member of $A$ (recall that $A$ normalizes $N$). This is because
\[
a_t[o]=\tanh t\in(-1,1)
\]
parametrizes any other point on the geodesic line $(-1,1)\subset\C$ and an easy calculation shows that its $N$-orbit is just the circle through that point and tangent to $b_0$ (see Fig. \ref{fig:horo}). 
It is clear that by acting with the rotation group one gets all other horocycles, that is, all the circles in $\DD$ tangent to  the boundary. Thus, any other horocycle $\xi$ can be written in the form $ka\cdot\xi_0$ with $k\in K$ and $a\in A$. But this means
\[
\xi=(ka)N (ka)^{-1}(ka[o]),
\]
which exhibits $\xi$ as an orbit of a group conjugate to $N$, namely $(ka)N (ka)^{-1}$. This motivates the Definition~\ref{horo} below.

\begin{definition}[\cite{gass}] \label{horo} A {\it horocycle}\index{horocycle} in $X$ is any orbit of any subgroup of $G$ conjugate to $N$, that is an orbit $N^g[x]$ where $x\in X$, $g\in G$ and $N^g=gNg^{-1}$.  We shall denote by $\Xi$ the set of all horocycles in $X$.
\end{definition}
 
By Theorem 1.1 in Chap.II in \cite{gass}, horocycles  are closed submanifolds of $X$,  the $G$-action on $X$ maps horocycles to horocycles and in fact the group $G$ acts transitively on $\Xi$ by 
\[(g,N^{h}[x])\mapsto g.(N^{h}[x]):=gN^{h}[x].\]
We fix $x\in X$ and we consider the horocycle $\xi=N[x]$. By Theorem 1.1 in Chap.II in \cite{gass}, the isotropy at $\xi$ is $M_xN$ and therefore
\begin{equation*}
\Xi\simeq G/M_xN
\end{equation*}
under the diffeomorphism $gM_xN\mapsto gN[x]$.
Furthermore, by Proposition 1.4 in Chap.II in \cite{gass}, $(K_x/M_x)\times A$ is diffeomorphic to $G/M_xN$ under the mapping 
\begin{equation}\label{xhoro}
(k_xM_x,a)\mapsto k_xaM_xN.
\end{equation}
Therefore, for each horocycle $ {\xi}\in\Xi$ there exist unique $k_xM_x\in K_x/M_x$ and $a\in A$ such that 
\begin{equation}
\label{hatnohat}
	{\xi}=k_xaN[x].
\end{equation}
Finally, since $K/M$ is diffeomorphic to $K_x/M_x$ under the mapping $\kappa_{o,x}(kM)=\kappa_x(k)M_x$, we define the diffeomorphism
\begin{equation}\label{Psix}
	\Psi_x\colon K/M\times A\longrightarrow {\Xi},\quad (kM,a)\mapsto \kappa_x(k)aN[x].
\end{equation}


Observe that the boundary point $kM\in K/M$ which identifies the horocycle $\xi=\kappa_x(k)aN[x]$ through \eqref{Psix} is independent of the choice of the reference point $x\in X$. Namely, for every $x, y\in X$
\[
\Psi_x(kM,a)=\Psi_y(kM,a')
\]
for some $a'\in A$. Indeed, if $\xi=k_xaN[x]$ and if we pick $y\in X$,  hence $k_yM_y\in K_y/M_y$ and $a'\in A$ such that $\xi=k_ya'N[y]$, then $k_yM_y=\kappa_y(k_x)M_y$ and this identifies the boundary point $\kappa_o(k_x)M$. Indeed, by the $K_yAN$ and $KAN$ Iwasawa decompositions of $k_x$, we have that
\[\kappa_y(k_x)\in k_xAN=\kappa_{o}(k_x)AN,\]
so that 
\[\kappa_{y,o}(\kappa_y(k_x)M_y)=\kappa_o( \kappa_y(k_x))M=\kappa_{o}(k_x)M.\]

We shall say that $\Psi_x(kM,a)$ represents the horocycle with \emph{normal}\index{normal!horocycle}  $kM$ and  \emph{composite distance} $\log a$ from $x$ (see below, Definition~\ref{CD}). We stress that the normal of a horocycle is independent of the choice of $x\in X$. The composite distance, however, is different for different reference points.

This parametrization generalizes the geometric picture in $\DD$, where a horocycle
$\xi=ka_tN[o]$ is identified by the boundary point $kM\in K/M$ to which it is tangent and 
the ``signed distance'' $t$ from the reference point, see Fig.~\ref{fig:horo}.


\begin{proposition}\label{horoyb} Fix a reference point $x\in X$. The horocycle through $y\in X$ with normal $kM$ is $N^{\kappa_x(k)}[y]$.
\end{proposition}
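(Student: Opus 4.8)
The plan is to verify that the orbit $N^{\kappa_x(k)}[y]$ enjoys the two properties that single out the horocycle through $y$ with normal $kM$, namely that it contains $y$ and that its normal is $kM$, and then to invoke the parametrization \eqref{Psix} to conclude that these two properties determine it uniquely. The first property is immediate: taking the identity element of $N^{\kappa_x(k)}=\kappa_x(k)N\kappa_x(k)^{-1}$ shows at once that $y\in N^{\kappa_x(k)}[y]$. The whole difficulty is therefore concentrated in computing the normal, and the idea is to strip off the conjugating factor $\kappa_x(k)$ so as to reduce the question to the elementary description of the $N$-orbits in $X$.

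Concretely, I would set $y':=\kappa_x(k)^{-1}[y]$ and observe that $N^{\kappa_x(k)}[y]=\kappa_x(k)\bigl(N[y']\bigr)$. Since $K_x$ is the isotropy of $G$ at $x$ and, by Theorem~\ref{ID} read in the ordering relative to $K_x$, every element of $G$ factors through $NAK_x$, the point $y'$ admits a unique expression $y'=n'a'[x]$ with $n'\in N$ and $a'\in A$. Because $A$ normalizes $N$, the $N$-orbit of $y'$ is then $N[y']=\{n''a'[x]:n''\in N\}=a'N[x]$. Feeding this back yields the chain $N^{\kappa_x(k)}[y]=\kappa_x(k)\,N[y']=\kappa_x(k)a'N[x]=\Psi_x(kM,a')$, where the last equality is precisely the definition \eqref{Psix} of the parametrization $\Psi_x$.

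By \eqref{Psix} and the definition of the normal, $\Psi_x(kM,a')$ is a horocycle with normal $kM$, and it contains $y$ by the first step; hence $N^{\kappa_x(k)}[y]$ is a horocycle through $y$ with normal $kM$. Finally, since $\Psi_x\colon K/M\times A\to\Xi$ is a diffeomorphism, the horocycles with a given normal $kM$ form the one-parameter family $\{\Psi_x(kM,a)\}_{a\in A}$, and the uniqueness of the decomposition $y'=n'a'[x]$ shows that exactly one member of this family, the one with $a=a'$, passes through $y$. Therefore $N^{\kappa_x(k)}[y]$ is the horocycle through $y$ with normal $kM$, as claimed. The only delicate point is the reduction $N^{\kappa_x(k)}[y]=\kappa_x(k)\,N[y']$ together with the identification $N[y']=a'N[x]$; once one recognizes that every $N$-orbit in $X$ has the form $aN[x]$ — which rests solely on $A$ normalizing $N$ and on the uniqueness in the Iwasawa factorization — the remainder is routine bookkeeping.
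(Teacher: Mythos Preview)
Your proof is correct and uses the same essential ingredients as the paper's: the Iwasawa factorization relative to $K_x$, the fact that $A$ normalizes $N$, and the bijectivity of the parametrization $\Psi_x$ (equivalently~\eqref{hatnohat}). The organization differs slightly: the paper starts from an arbitrary horocycle through $y$ with normal $kM$, writes it as $N^g[y]$, and then forces $\kappa_x(g)M_x=k_xM_x$ by comparing with~\eqref{hatnohat}, whereas you compute $N^{\kappa_x(k)}[y]$ directly, read off its normal via $\Psi_x$, and then argue uniqueness. Your direction is arguably a bit cleaner since it avoids invoking that $M_x$ normalizes $N$; otherwise the two arguments are the same computation run in opposite directions.
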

\begin{proof} An equivalent statement is that, writing $k=\kappa_o(k_x)$ with $k_x\in K_x$,  the horocycle through $y$ with normal $\kappa_o(k_x)M$ is $k_xNk_x^{-1}[y]$ because $k_x=\kappa_x(k)$ by item (ii) in Lemma~\ref{kappas}.
\\
Since $k=\kappa_o(k_x)$, then $kM$ and $k_xM_x$ identify the same boundary point and a horocycle with normal $kM$ has the form $\xi=k_xaM_xN$ as in~\eqref{hatnohat}. If this represents a horocycle through $y$, then  there exists $g\in G$ such that 
\[
 {\xi}=gNg^{-1}[y]=\kappa_x(g)N\kappa_x(g)^{-1}[y].
\]
Now observe that there exist $\alpha\in A$ and $\nu\in N$ such that $\kappa_x(g)^{-1}[y]=\nu\alpha[x]$, then $ {\xi}=\kappa_x(g)\alpha N[x]$. Thus, since $ {\xi}=k_xaN[x]$, we have that 
\[ \kappa_x(g)\alpha N[x]=k_xaN[x],\]
which by \eqref{hatnohat} implies $\kappa_x(g)M_x=k_xM_x$.
Hence $\kappa_x(g)=k_xm_x$ for some $m_x\in M_x$. 
However, \eqref{MnormN} implies at once that $m_xNm_x^{-1}=N$, and hence
$N^{\kappa_x(g)}=N^{k_x}$. 
\end{proof}

\begin{definition}\label{CD}
Fix a reference point $x\in X$ and choose $y\in X$ and $b\in K/M$, so that  by Proposition~\ref{horoyb} the horocycle  $ {\xi}= {\xi}(y,b)$ passing through $y$ with normal $b=kM$ is uniquely determined, and hence there exists a unique $a\in A$ such that
\begin{equation*}
 {\xi}(y,kM)=\kappa_x(k)aN[x].
\end{equation*}
We denote by $A_x(y,b)\in\mathfrak{a}$ the {\it composite distance}\index{composite distance!horocycle} of the horocycle $ {\xi}(y,b)$ from $x\in X$, namely
\[
A_x(y,b)=\log a,
\]
\end{definition}
The reader is warned not to confuse the composite distance $A_x(y,b)$,
which depends on $(y,b)\in X\times B$, with the Abelian component $A_x(g)$ of $g$ in the Iwaswawa decomposition $NAK_x$, which is a function on $G$ (see~\eqref{IWAx}). A relation between the two does exist, as pointed out in the next lemma, where we collect several properties of the composite distance which will play a crucial role in our work.
\begin{lemma}\label{propA}
Fix a reference point $x\in X$. Then:
\begin{enumerate}
		\item[(i)] for any $k_x\in K_x$ and $g\in G$ we have
		\begin{equation}\label{Abello}
		A_{x}(g[x]\,,\kappa_{o}(k_{x})M)=A_x(k_x^{-1}g),
		\end{equation}
where the right-hand side is defined by~\eqref{IWAx};
		\item[(ii)] for any $y\in X$, $kM\in K/M$ and $g\in G$ we have
		\begin{equation}\label{ginvA}		
		A_x(y,kM)=A_{g[x]}(g[y]\,,g \langle kM\rangle);
		\end{equation}
		\item[(iii)] for any $y,z\in X$ and $kM\in K/M$ we have
		\begin{equation}\label{cocycl}
		A_{x}(y,kM)=A_x(z,kM)+A_{z}(y,kM).
		\end{equation}
\end{enumerate}
\end{lemma}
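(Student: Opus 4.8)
The plan is to treat the three identities in the order (i), (iii), (ii), since (i) is essentially a restatement of the definitions, (iii) needs only one extra structural fact, and (ii) is by far the most delicate.

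For (i), I would start from Proposition~\ref{horoyb}. Writing $k=\kappa_o(k_x)$, so that $k_x=\kappa_x(k)$ by Lemma~\ref{kappas}(ii), the horocycle through $y=g[x]$ with normal $\kappa_o(k_x)M$ is $N^{k_x}[g[x]]=k_xNk_x^{-1}[g[x]]$, while by Definition~\ref{CD} this same horocycle equals $k_x\exp(A_x(g[x],\kappa_o(k_x)M))N[x]$. Applying $k_x^{-1}$ (which fixes $x$, as $k_x\in K_x$) to both descriptions, and using $aN=Na$, reduces the claim to the equality of $N$-orbits $N[(k_x^{-1}g)[x]]=N\exp(A_x(g[x],\kappa_o(k_x)M))[x]$. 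Factoring $k_x^{-1}g=n\exp(A_x(k_x^{-1}g))k_x'$ according to the Iwasawa decomposition $N\exp(\ga)K_x$ of~\eqref{IWAx} and using that $K_x$ fixes $x$ shows the left orbit is $N\exp(A_x(k_x^{-1}g))[x]$. The uniqueness of the $A$-coordinate in the parametrisation~\eqref{xhoro} then forces $A_x(k_x^{-1}g)=A_x(g[x],\kappa_o(k_x)M)$, which is~\eqref{Abello}.

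For (iii), the horocycle $\xi(y,kM)$ is intrinsic, so it has the two descriptions $\kappa_x(k)\exp(A_x(y,kM))N[x]$ and $\kappa_z(k)\exp(A_z(y,kM))N[z]$, while $\xi(z,kM)=\kappa_x(k)\exp(A_x(z,kM))N[x]$ and, crucially, $A_z(z,kM)=0$ because $\xi(z,kM)=N^{\kappa_z(k)}[z]=\kappa_z(k)N[z]$. I would first note that $\xi(y,kM)=h.\xi(z,kM)$ with $h=\kappa_z(k)\exp(A_z(y,kM))\kappa_z(k)^{-1}$, a conjugate of an element of $A$. The key structural input is that $\kappa_x(k)$ and $\kappa_z(k)$ share the same image $k$ under $\kappa_o$ (Lemma~\ref{kappas}(i)), whence $\kappa_z(k)^{-1}\kappa_x(k)\in AN$. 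Substituting the $x$-description of $\xi(z,kM)$ into $\xi(y,kM)=h.\xi(z,kM)$, pushing every $N$-factor to the right past the $A$-factors (legitimate by~\eqref{saN}, so that they are absorbed into $N[x]$) and cancelling the $A$-part of $\kappa_z(k)^{-1}\kappa_x(k)$ against its inverse, I obtain $\xi(y,kM)=\kappa_x(k)\exp(A_z(y,kM)+A_x(z,kM))N[x]$. Comparing with the $x$-description and invoking uniqueness in~\eqref{xhoro} once more gives~\eqref{cocycl}.

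The heart of the matter is (ii). I would recast it as the assertion that the parametrisation~\eqref{Psix} is equivariant \emph{with unchanged $A$-coordinate}, i.e. $g.\Psi_x(kM,a)=\Psi_{g[x]}(g\langle kM\rangle,a)$; granting this, \eqref{ginvA} follows at once by taking $\xi=\xi(y,kM)=\Psi_x(kM,\exp A_x(y,kM))$ and using $g.\xi=\xi(g[y],g\langle kM\rangle)$. First, from $g.N^{h}[w]=N^{gh}[g[w]]$ and the computation $\kappa_o(g\kappa_x(k))=\kappa_o(gk)$ (valid since $\kappa_x(k)\in kAN$), both sides are horocycles with the same normal $\kappa_o(gk)M$, so only the $A$-coordinates must be matched. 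The genuine obstacle is that one cannot transport the Iwasawa $A$-function through $g$: conjugation by $g$ destroys $N$ and $A$ and replaces $K_x$ by $K_{g[x]}$, so the change of maximal compact subgroup and of base point occur simultaneously. The remedy is to reduce both horocycles to orbits of the fixed basic horocycle $\xi_o=N[o]$: writing $N[x]=\alpha_x\,\xi_o$ with $\alpha_x\in A$ the $A$-component of $s_o(x)\in AN$, each side becomes $u.\xi_o$ for an explicit $u\in G$, and two such orbits coincide exactly when the quotient $u_{\mathrm{RHS}}^{-1}u_{\mathrm{LHS}}$ lies in the stabiliser $MN$ of $\xi_o$. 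I would compute this quotient through the Borel-section identities $\kappa_x(h)=s_o(x)\,\kappa_o(s_o(x)^{-1}h)\,s_o(x)^{-1}$ and $g\,s_o(x)=s_o(g[x])\lambda$ with $\lambda\in K$, together with the exact equivariance $\kappa_o(\lambda h)=\lambda\kappa_o(h)$ for $\lambda\in K$, the fact that $M$ centralises $A$, and the normalisations~\eqref{MnormN} and~\eqref{saN}. These collapse the quotient to a single element of $N\subset MN$, which proves the equivariance. I expect the bookkeeping in this final step, rather than any conceptual difficulty, to be the main labour; everything else is a direct application of the uniqueness built into~\eqref{xhoro}.
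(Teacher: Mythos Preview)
Your argument is correct in all three parts, but the route differs noticeably from the paper's.

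For (i) your proof coincides with the paper's. For (iii) the paper works in the opposite order: it first proves (ii) and then derives (iii) by an Iwasawa computation using (i), writing $H_o(s_z(o)k)-H_o(s_y(o)k)$ and converting it via~\eqref{eq:fondAH} and~\eqref{Abello}. Your proof of (iii) is more geometric and self-contained: you compare the $x$- and $z$-descriptions of the same horocycle and exploit that $\kappa_z(k)^{-1}\kappa_x(k)\in AN$, so the $A$-part cancels and only an $N$-factor survives. This avoids the Borel-section bookkeeping entirely.

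For (ii) the difference is more substantial. The paper proves~\eqref{ginvA} directly (first for $x=o$) by writing the horocycle through $g[y]$ with normal $g\langle kM\rangle$ as $haN[g[o]]$, establishing the auxiliary claim $k_1\kappa_o(g^{-1})M=kM$, and then reading off $a$; afterwards it \emph{deduces} the equivariance $g.\Psi_x(kM,a)=\Psi_{g[x]}(g\langle kM\rangle,a)$ as a corollary via~\eqref{charcxi}. You reverse this: you prove the equivariance first, by reducing both sides to orbits $u\cdot\xi_o$ and checking $u_R^{-1}u_L\in MN$. Your key identity $\kappa_x(h)=s_o(x)\,\kappa_o(s_o(x)^{-1}h)\,s_o(x)^{-1}$ together with $gs_o(x)=s_o(g[x])\lambda$, $\lambda\in K$, indeed gives $\kappa_{g[x]}(\kappa_o(gk))^{-1}\,g\,\kappa_x(k)=s_o(g[x])s_o(x)^{-1}\in NA$, and after commuting the $A$-factors the quotient lands in $N$. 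From the equivariance,~\eqref{ginvA} follows immediately. Your approach packages the computation more cleanly and yields~\eqref{eq:actiongxiparametrized} as a byproduct rather than a consequence; the paper's approach is more hands-on but requires the intermediate claim~\eqref{claim0}.
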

\begin{proof} \begin{enumerate}
	 \item[(i)]
 Let $k_x\in K_x$ and $g\in G$. By Proposition~\ref{horoyb} and (ii) of Lemma \ref{kappas},  the horocycle passing through $g[x]$ with normal $\kappa_{o}(k_{x})M$ is $k_{x}Nk^{-1}_{x}g[x]$. By Definition \ref{CD}, we have that
		\[k_{x}Nk^{-1}_{x}g[x]=k_{x}\exp(A_{x}(g[x],\kappa_{o}(k_{x})M))N[x],\] 
		and so  $k_{x}^{-1}g\in N\exp(A_{x}(g[x],\kappa_{o}(k_{x})M))K_{x}$. This proves (i).

\item[(ii)] For simplicity, we first prove the statement in the case $x=o$. Let $y\in X$, $kM\in K/M$ and $g\in G$.
By Proposition~\ref{horoyb}, and the fact that $A$ normalizes $N$, the horocycle passing through $g[y]$ with normal $g \langle kM\rangle=\kappa_{o}(gk)M$  (see~\eqref{GonB}) is
\[
N^{\kappa_o(gk)}g[y]
=\kappa_o(gk)N\kappa_o(gk)^{-1}g[y]
=gkN(gk)^{-1}g[y].
\]
By the diffeomorphism given in~\eqref{hatnohat}, there exist $h\in K_{g[o]}$ and $a\in A$ such that
		\begin{equation}\label{eq:dimii}
		gkNk^{-1}[y]=haNg[o],
		\end{equation}
and thus, by definition
\begin{equation*}
a=\exp(A_{g[o]}(g[y]\,,g\langle kM\rangle)).
\end{equation*}
		We need to show that $a=\exp(A_{o}(y,kM))$.
		Since  $K_{g[o]}=gKg^{-1}$, we have $h=gk_1g^{-1}$ for some $k_1\in K$ and we claim that
		\begin{equation}\label{claim0}
		k_1\kappa_{o}(g^{-1})M=kM.
		\end{equation}
		By \eqref{eq:dimii} we have that 
		\[k_1g^{-1}aNs_o(g[o])[o]=k_1g^{-1}aNg[o]=kNk^{-1}[y]=kNs_o(k^{-1}[y])[o].\]
		Since $s_o$ takes values in $AN$ and writing the $NAK$ decomposition of $g^{-1}$, there exist $a',a''\in A$ such that 
		\[k_1\kappa_{o}(g^{-1})a'N[o]=ka''N[o].\]
		Hence, by \eqref{hatnohat} we have that $k_1\kappa_{o}(g^{-1})M=kM$, that is the claim~\eqref{claim0}. Therefore, for some $m\in M$ the right-hand side of~\eqref{eq:dimii} is
\begin{align*}
haNg[o]&=gkm\kappa_o(g^{-1})^{-1}g^{-1}aNg[o]\\
&=gkmaN\left(\kappa_o(g^{-1})^{-1}g^{-1}\right)g[o]\\
&=gkmaN\kappa_o(g^{-1})^{-1}[o]\\
&=gkmaN[o]=gkaN[o]
\end{align*}
where in the second line we have used that $\kappa_o(g^{-1})^{-1}g^{-1}\in AN$ and then~\eqref{saN}. 
Summarizing, we have shown that
\[
gkNk^{-1}s_o(y)[o]=gkaN[o].
\]
By taking $e\in N$ on the left, there must be $n\in N$ such that $s_o(y)[o]=kan[o]$, so that
$(kan)^{-1}s_o(y)\in K$, whence $k^{-1}s_o(y)\in Kan$. This shows that 
\[a=\exp(A_o(k^{-1}s_o(y)))=\exp(A_o(y,kM)),\]
where the second equality follows by item (i). This concludes (ii) in the case $x=o$.
		The general case follows by the latter. Indeed, by applying it with $s_o(x)$ and $gs_o(x)$, respectively in the first and the second equality, we obtain
		\[A_x(y,kM)=A_o(s_o(x)^{-1}[y],s_o(x)^{-1}\langle kM\rangle )=A_{g[x]}(g[y],g\langle kM\rangle)\;.\]


\item[(iii)] For simplicity we start by proving it for $x=o$, the general case follows. Let $y,z\in X$ and $kM\in K/M$. By the definition of $s_z$, we have that $s_z(o)^{-1}=s_o(z)$ and $K=s_z(o)K_zs_z(o)^{-1}$. Observe that, by the $K_zAN$ Iwasawa decomposition of $k$
\[s_z(o)k\in s_z(o)\kappa_{z}(k)AN=s_z(o)\kappa_{z}(k)s_z(o)^{-1}AN,\]
and then
\[\kappa_o(s_z(o)k)=s_z(o)\kappa_{z}(k)s_z(o)^{-1}.\]
		Furthermore, $s_y(o)k\in K\exp(H_o(s_y(o)k))N$, so that
		\begin{equation}\label{itemiii}
		s_z(o)kk^{-1}s_y(o)^{-1}\in s_z(o)\kappa_{z}(k)s_z(o)^{-1}N\exp(H_o(s_z(o)k)-H_o(s_y(o)k))K.
		\end{equation}
		Now, observe that by \eqref{eq:fondAH} and (i) it is possible to rewrite
		\begin{align*}
		H_o(s_z(o)k)-H_o(s_y(o)k)&=A_o(k^{-1}s_y(o)^{-1})- A_o(k^{-1}s_z(o)^{-1})\\
		&=A_o(s_y(o)^{-1}[o],kM)- A_o(s_z(o)^{-1}[o],kM)\\
		&=A_o(y,kM)- A_o(z,kM).
		\end{align*}
		Hence, \eqref{itemiii} becomes
		\[s_z(o)s_y(o)^{-1}\in s_z(o)\kappa_{z}(k)s_z(o)^{-1}N\exp(A_o(y,kM)- A_o(z,kM))K\;,\]
		and by conjugating by $s_{z}(o)^{-1}\in AN$
		\begin{align*}
		s_y(o)^{-1}s_z(o)\in& \kappa_{z}(k)s_z(o)^{-1}N\exp(A_o(y,kM)- A_o(z,kM))Ks_z(o)\\
		=&\kappa_{z}(k)N\exp(A_o(y,kM)- A_o(z,kM))s_z(o)^{-1}Ks_z(o)\\
		=&\kappa_{z}(k)N\exp(A_o(y,kM)- A_o(z,kM))K_z,
		\end{align*}
		where in the first equality we use \eqref{saN}.
		Finally, we observe that $s_y(o)^{-1}s_z(o)=s_o(y)s_z(o)=s_z(y)$ and then
		\[\kappa_{z}(k)^{-1}s_z(y)\in N\exp(A_o(y,kM)- A_o(z,kM))K_z.\]
		Therefore, by item (i) of Lemma \ref{kappas} and item (i) above
		\[A_o(y,kM)- A_o(z,kM)=A_z(\kappa_{z}(k)^{-1}s_z(y))=A_z(y,kM)\;.\]
		This proves the case $x=o$. The general case trivially follows:
		\begin{align*}
		A_x(z,kM)+A_{z}(y,kM)&=A_o(z,kM)-A_{o}(x,kM)+A_o(y,kM)-A_{o}(z,kM)\\
		&=A_x(y,kM).
		\end{align*}
This finishes the proof of the lemma.

	\end{enumerate}
\end{proof}
Let $x\in X$. By Definition~\ref{CD}, for every $(kM,a)\in K/M\times A$ and $z\in X$
\begin{equation}\label{charcxi}
	z\in\Psi_x(kM,a)\quad\Longleftrightarrow\quad A_{x}(z,kM)=\log a.
\end{equation}
Then, by \eqref{charcxi} together with \eqref{ginvA} it follows that 
\begin{align*}
	z\in g.\Psi_x(kM,a)\quad&\Longleftrightarrow\quad g^{-1}[z]\in \Psi_x(kM,a)\\
	&\Longleftrightarrow\quad \log a=A_{x}(g^{-1}[z],kM)\\
	&\Longleftrightarrow\quad \log a=A_{g[x]}(z,g\langle kM\rangle)\\
	&\Longleftrightarrow\quad z\in \Psi_{g[x]}(g\langle kM\rangle,a).
\end{align*}
So that, 
\begin{equation}
\label{eq:actiongxiparametrized}
g.\Psi_x(kM,a)=\Psi_{g[x]}(g\langle kM\rangle,a).
\end{equation}
Furthermore, if $y\in X$, then by \eqref{charcxi}  and \eqref{cocycl} we have that 
\begin{align*}
	z\in\Psi_x(kM,a)\quad&\Longleftrightarrow\quad \log a=A_{x}(z,kM)\\
	&\Longleftrightarrow\quad \log a=A_{x}(y,kM)+A_{y}(z,kM)\\
	&\Longleftrightarrow\quad \log (a\exp(-A_{x}(y,kM)))=A_{y}(z,kM)\\
	&\Longleftrightarrow\quad z\in\Psi_y(kM,a\exp(A_{y}(x,kM))),
\end{align*}
where in the last equivalence we use the equality $A_{y}(x,kM)=-A_{x}(y,kM)$, which follows immediately by \eqref{cocycl}.
Hence, we have
\begin{equation}\label{xirefere}
	(\Psi_y^{-1}\circ\Psi_x)(kM,a)=(kM,a\exp(A_y(x,kM))).
\end{equation}



{\bf Positive definite symmetric matrices}.
We recall the example of the positive definite symmetric matrices in order to explicitly describe the horocycles.
Recall that the semisimple group associated to the symmetric space of the positive definite symmetric matrices is $G=\Sldr$. As we have already seen, the Iwasawa decomposition of $G$ is formed by $K=\Sod$, the subgroup $A$ of diagonal matrices with positive entries on the diagonal and the subgroup $N$ of the unit upper triangular matrix. Hence the principal horocycle is
\[ {\xi}_0=N[{\rm I}_d]=\{n\t n:n\in N \}\;.\]
Let $a=\diag(e^{a_1},\dots,e^{a_d})\in A$, then the horocycle obtained as the $N$-orbit of $aK\in\SPdr$ is the subset of $\SPdr$ of matrices of the form
\[
\Bigl\{ e^{a_i+a_j}\sum_{k=\max(i,j)}^d n_{i,k}n_{j,k} \Bigr\}_{i,j},
\]
for every choice of $d(d-1)/2$ values $n_{i,j}\in\R$ with $j>i$, where $n_{i,i}=1$. The subgroup $\overline{N}=\Theta(N)$ coincides with the lower unit triangular matrices. The $\overline{N}$-orbit of a positive definite diagonal matrix $aK$ is the set of all the symmetric positive definite matrices having $a^2$ as diagonal matrix in the usual $LD\t L$ decomposition. Furthermore, for every $a\in A$, we have $\overline{N}[a]=(N[a^{-1}])^{-1}$. It follows that the horocycle $N[a]$ is the subset of $\SPdr$ of matrices having $a^2$ as diagonal matrix in the $UD\t U$ decomposition.
\begin{center}
	\begin{figure}[h]
		\begin{center}
			\begin{tikzpicture}[scale=1.2]
			\draw[->](-3.5,0)--(3.5,0);
			\draw[->](0,0)--(0,4);
			\filldraw[black] (0.5,0.9) circle (1pt) node [anchor=west]{$a[i]\!\!\!\!\!$};
			\filldraw[black] (0,1) circle (1pt) node [anchor=south east]{$i$};
			\draw [dashed](1,0) arc(0:180:1);
			\draw (1,0.58) circle (0.58);
			\draw (-1,1.73) circle (1.73);
			\node at (2.1,1) {$N[a[i]]$};
			\node at (-2.75,3.3) {$\overline{N}[a[i]]$};
			\end{tikzpicture}
		\end{center}
		\caption{In the special case $d=2$, the $N$-orbit and the $\overline{N}$-orbit are tangent circles. More in general, for any $w\in W$ the intersection between the $N$-orbit and the $(wNw^{-1})$-orbit of a point coincides with the point itself, see  Proposition~1.7 in Chap.~II in \cite{gass}.}
	\end{figure}
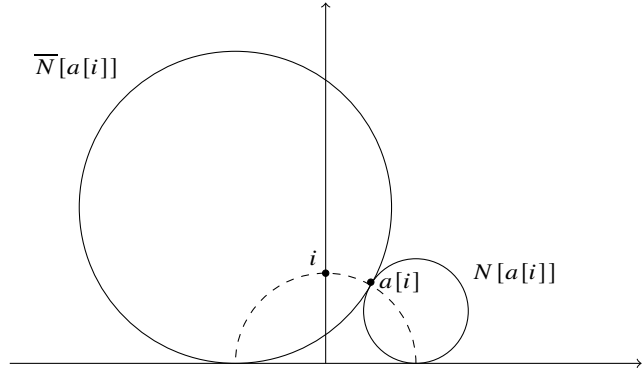
\end{center}
Let $p\in \SPdr$ and let $p=OD\t O$ be the spectral decomposition of $p$, with $O\in\Sod$ and $D$ diagonal matrix and let $k\in K$. Then we have
\[k[p]=kp\t k=kOD\t O\t k, \]
and since $kO\in\Sod$ then $k[p]$ has the same eigenvalues of $p$. In fact, the $K$-orbit of $p\in\SPdr$ is the subset of all the matrices in $\SPdr$ with the same eigenvalues of $p$ and if $a\in A$ the matrix $k[a]$ has the columns of $k$ as eigenvectors. Furthermore in each $K$-orbit there exists a diagonal matrix with  entries  ordered decreasingly on the diagonal, that is a matrix that lies on $A_+[{\rm I}_d]$.

Finally, by \eqref{hatnohat}, any horocycle $ {\xi}\in\Xi$ can be written as $ {\xi}=kaN[{\rm I}_d]$ for some $k\in K$ and $a\in A$. This is thus  the subset of $\SPdr$ of matrices having $a^2$ as diagonal matrix in the $UD\t U$ decomposition w.r.t. the $\R^d$-basis $\{ke_i\}_{i=1,\dots,d}$, where $\{e_i\}_{i=1,\dots,d}$ is the canonical basis of $\R^d$.

\section{Analysis on Symmetric Spaces}\label{sec:analysis}
We collect in this section the analytic ingredients that come into play. Apart from the basic measures and function spaces, we introduce the Helgason-Fourier transform and the Radon transform and recall the results that we use throughout. The main references are \cite{gga}, \cite{gass}.

\subsection{Measures}
This section is devoted to the measures that will be involved in what follows. We first present the Haar measure and  then introduce the  measures on the spaces $X$, $B$ and $\Xi$.  These are necessary in order to define the  function spaces that we are interested in, among which the $L^2$-spaces that carry the regular representations. General references are \cite{folland16} for the first part, and \cite{gga} and \cite{gass} for the second.
\vskip0.2truecm

\subsubsection{Haar measures and modular functions}\label{generalities}
We recall some basic definitions and results of Analysis on locally compact groups. We shall use them in the more specific context of Lie groups.  A standard reference is Chap.~2 in \cite{folland16}.

 A \emph{topological group}\index{topological!group} is a group $G$ endowed with  a topology relative to which the group operations
\[
(g,h)\mapsto gh,
\qquad
g\mapsto g^{-1}
\]
are continuous as maps $G\times G\to G$ and $G\to G$, respectively. $G$ is locally compact if every point has a compact neighborhood. We shall also assume our groups to be Hausdorff. In particular, all Lie groups are   locally compact topological groups.

A Borel measure $\mu$ on the topological space $X$,  that is, a meausure on the $\sigma$-algebra $\cB(X)$ of the Borel sets of $X$, is called a \emph{Radon measure}\index{Radon!measure} if:
\begin{enumerate}
\item[(i)] it is finite on compact sets;
\item[(ii)] it is outer regular on the Borel sets, that is for every Borel set $E$
\[
\mu(E)=\inf\{\mu(U):U\supset E,\;U\text{ open };\}
\]
\item[(iii)] it is inner regular on the open sets, that is for every open set $U$
\[
\mu(U)=\sup\{\mu(K):K\subset E,\;K\text{ compact }\}.
\]
\end{enumerate}

\begin{definition} A \emph{left Haar measure}\index{Haar!measure} on the topological group $G$ is a non zero Radon measure $\mu$ such that $\mu(xE)=\mu(E)$ for every Borel set $E\subset G$ and every $x\in G$. Similarly for \emph{right Haar measures}.
\label{haar}\end{definition}

Of course, the prototype of Haar measure is the Lebesgue measure on the additive group $\R^d$, which is invariant under left (and right) translations. 
Compactly supported continuous functions on a topological space $Y$ are denoted $C_c(Y)$.
An equivalent definition for the left Haar measure $\mu$ is to require that for every $f\in C_c(G)$ and $h\in G$,
\[
\int_Gf(hg)\de\mu(g)=\int_Gf(g)\de\mu(g)\;.
 \]
A fundamental result on Haar measures is the following theorem due to A.~Weil.

\begin{theorem}[Theorem 2.10, \cite{folland16}]\label{weil} Every locally compact  group  $G$ has a left Haar measure $\lambda$, which is essentially unique in the sense that if $\mu$ is any other left Haar measure, then there exists a positive constant $C$ such that $\mu=C\lambda$.
\label{existhaar}\end{theorem}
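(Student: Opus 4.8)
The plan is to deduce the statement from the Riesz representation theorem by constructing a nonzero left-invariant positive linear functional $I$ on $C_c(G)$; such a functional corresponds to a Radon measure $\lambda$, and left-invariance of $I$ (i.e.\ $I(L_yf)=I(f)$, where $L_yf(x)=f(y^{-1}x)$) translates exactly into $\lambda(yE)=\lambda(E)$. First I would set up Weil's \emph{covering numbers}. Writing $C_c^+(G)$ for the nonnegative, not identically zero members of $C_c(G)$, I define for $f,\phi\in C_c^+(G)$ the quantity $(f:\phi)=\inf\{\sum_j c_j : f\le\sum_j c_j\,L_{y_j}\phi,\ c_j>0,\ y_j\in G\}$; compactness of $\supp f$ guarantees that admissible finite sums exist, so $(f:\phi)<\infty$. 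One then records the elementary properties: left-invariance $(L_yf:\phi)=(f:\phi)$, positive homogeneity, monotonicity, subadditivity $(f_1+f_2:\phi)\le(f_1:\phi)+(f_2:\phi)$, the submultiplicativity $(f:\phi)\le(f:\psi)(\psi:\phi)$, and the lower bound $(f:\phi)\ge\sup f/\sup\phi$.

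Next I would normalize. Fixing once and for all a reference $f_0\in C_c^+(G)$, set $I_\phi(f)=(f:\phi)/(f_0:\phi)$. Submultiplicativity gives the two-sided bounds $(f_0:f)^{-1}\le I_\phi(f)\le(f:f_0)$, so $I_\phi(f)$ ranges in a compact interval $J_f=[(f_0:f)^{-1},(f:f_0)]$ depending on $f$ but not on $\phi$. The functional $I_\phi$ inherits left-invariance, homogeneity, monotonicity and subadditivity, but it is only \emph{approximately} additive; the crucial lemma is that for every $f_1,f_2\in C_c^+(G)$ and $\eps>0$ there is a neighborhood $V$ of $e$ such that $\supp\phi\subset V$ forces $I_\phi(f_1)+I_\phi(f_2)\le I_\phi(f_1+f_2)+\eps$. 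To promote this to an honest functional I would use a compactness argument: view each $I_\phi$ as a point of the compact product space $\prod_f J_f$ (Tychonoff), and for each neighborhood $V$ of $e$ let $C(V)$ be the closure of $\{I_\phi:\supp\phi\subset V\}$. These closed sets have the finite intersection property, so there is $I$ in their intersection; by the approximate-additivity lemma this limiting $I$ is genuinely additive, and it remains nonzero, positively homogeneous, monotone and left-invariant. Riesz then produces the desired left Haar measure $\lambda$. The main obstacle here is precisely the approximate-additivity estimate combined with the limit extraction: exact additivity fails for any single $\phi$ and is recovered only in the limit of shrinking supports.

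For the essential uniqueness, let $\mu$ be a second left Haar measure. I would show that the ratio $\int_G f\,\de\mu\,/\,\int_G f\,\de\lambda$ is the same positive constant $C$ for every $f\in C_c^+(G)$, whence $\mu=C\lambda$ because two Radon measures inducing the same functional on $C_c(G)$ coincide. The mechanism is Fubini on $G\times G$: for $f,g\in C_c^+(G)$ one writes the product $(\int g\,\de\mu)(\int f\,\de\lambda)$ as a double integral, performs the left-invariant substitution $y\mapsto xy$ in one variable, interchanges the order of integration, and then carries out the inversion change of variables to trade the two measures against each other (the modular function surfaces here, but it is an intrinsic invariant of $G$, hence identical for $\mu$ and $\lambda$, and cancels in the symmetric comparison). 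Comparing with the symmetric manipulation starting from $(\int g\,\de\lambda)(\int f\,\de\mu)$ yields $\int f\,\de\mu\,\int g\,\de\lambda=\int f\,\de\lambda\,\int g\,\de\mu$, i.e.\ the ratio is independent of $f$. The points needing care are the justification of Fubini, handled by the $\sigma$-compactness and regularity built into Radon measures, and the well-definedness of the intermediate quotients, which follows from a continuity argument showing that $x\mapsto\int f(xy)\,\de\lambda(y)$ is continuous and not identically zero.

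Finally, I note that in the Lie group setting relevant to this paper there is a much shorter route to existence: choosing any nonzero $\omega_e\in\bigwedge^{\dim G}\gg^*$ and extending it by left translation, $\omega_g=(\de L_{g^{-1}})^*\omega_e$, produces a nowhere-vanishing left-invariant top-degree form whose associated density $|\omega|$ is a left Haar measure. Uniqueness up to a positive scalar among such forms is then transparent, since $\bigwedge^{\dim G}\gg^*$ is one-dimensional; the general argument of the previous paragraph is still needed to rule out left Haar measures not arising from an invariant form.
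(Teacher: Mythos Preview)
The paper does not prove this theorem; it merely quotes it from Folland's textbook as background. Your proposal is essentially Weil's original argument (covering numbers, Tychonoff compactness of the product of the intervals $J_f$, approximate additivity becoming exact in the limit), which is precisely the proof given in the cited reference, so in that sense you are aligned with the source the paper defers to.

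One point in your uniqueness sketch deserves tightening: you invoke the modular function and assert it is ``identical for $\mu$ and $\lambda$'' so that it cancels. As written this is circular, because the modular function is customarily defined only after a Haar measure has been fixed, and showing that two a priori unrelated left Haar measures induce the same modular function is tantamount to the uniqueness you are trying to prove. The standard Fubini argument (as in Folland) avoids this: one computes, for $f,g\in C_c^+(G)$, the double integral $\iint f(x)g(yx)\,\de\lambda(x)\,\de\mu(y)$ in two orders, using only left invariance of each measure and no inversion, to obtain $\bigl(\int f\,\de\mu\bigr)\bigl(\int g\,\de\lambda\bigr)=\bigl(\int f\,\de\lambda\bigr)\bigl(\int g\,\de\mu\bigr)$ up to a continuous-quotient step. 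Rewriting your uniqueness paragraph along those lines removes the circularity.

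Your final remark on the Lie-group shortcut via a left-invariant top form is correct and, given that the paper only ever uses Haar measure on Lie groups, is arguably the more pertinent existence proof here.
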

If we fix a left Haar measure $\mu$ on $G$, then for any $g\in G$ the measure $\mu_g$ defined by
\[
\mu_g(E)=\mu(Eg)
\]
is again a left Haar measure. Therefore there must exist a positive real number,
denoted $\Delta(g)$ such that
\[
\mu_g=\Delta(g)\mu.
\]
The function $\Delta:G\to\R_+$ is called the \emph{modular function}\index{modular function}. From now on, the choice of a left Haar measure 
$\mu$ is considered as implicitly made, and hence we write
\[
\de g:=\de\mu(g).
\]
\begin{proposition}[Proposition 2.24, \cite{folland16}]\label{modular} Let $G$ be a locally compact  group. The modular function $\Delta:G\to\R_+$ is a continuous homomorphism into the multiplicative group $\R_+$. Furthermore, for every $f\in L^1(G,\mu)$ we have
\[
\int_Gf(gh)\de g=\Delta(h)^{-1} \int_Gf(g)\de g.
\]
\end{proposition}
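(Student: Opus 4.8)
The plan is to extract everything from the defining relation $\mu_g=\Delta(g)\mu$, where $\mu_g(E)=\mu(Eg)$. First I would dispose of the \emph{homomorphism} property, which is purely algebraic. Using associativity of the group action on Borel sets, $E(gh)=(Eg)h$, I compute $\mu_{gh}$ in two steps:
\[
\mu_{gh}(E)=\mu\bigl((Eg)h\bigr)=\mu_h(Eg)=\Delta(h)\,\mu(Eg)=\Delta(h)\,\mu_g(E)=\Delta(h)\Delta(g)\,\mu(E).
\]
Since $\R_+$ is abelian this yields $\Delta(gh)=\Delta(g)\Delta(h)$, and taking $E$ with $0<\mu(E)<\infty$ confirms $\Delta(g)\in\R_+$. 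The identities $\Delta(e)=1$ and $\Delta(g^{-1})=\Delta(g)^{-1}$ then follow immediately.

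Next I would prove the integral formula, starting from indicator functions. For a Borel set $E$ the identity $\mathbf{1}_E(xg^{-1})=\mathbf{1}_{Eg}(x)$ gives
\[
\int_G \mathbf{1}_E(xg^{-1})\,\de\mu(x)=\mu(Eg)=\mu_g(E)=\Delta(g)\int_G \mathbf{1}_E\,\de\mu.
\]
By linearity this extends to simple functions, then by monotone convergence to nonnegative measurable functions, and finally to all $f\in L^1(G,\mu)$ after splitting into positive and negative (and real and imaginary) parts; thus $\int_G f(xg^{-1})\,\de\mu(x)=\Delta(g)\int_G f\,\de\mu$. Substituting $g=h^{-1}$ and using $\Delta(h^{-1})=\Delta(h)^{-1}$, and renaming the integration variable, produces exactly the claimed identity $\int_G f(gh)\,\de g=\Delta(h)^{-1}\int_G f\,\de g$.

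It remains to prove \emph{continuity} of $\Delta$, which I expect to be the main obstacle. Choosing $f\in C_c(G)$ with $f\ge 0$ and $c:=\int_G f\,\de\mu>0$ (such $f$ exists by regularity of the nonzero Radon measure $\mu$, e.g.\ via Urysohn's lemma), the integral formula gives $\Delta(h)=c\big/\int_G f(gh)\,\de g$, where the denominator $\phi(h):=\int_G f(gh)\,\de g=\Delta(h)^{-1}c$ is strictly positive. Hence it suffices to show that $h\mapsto\phi(h)$ is continuous. Writing $R_h f(g)=f(gh)$, this reduces to the standard \emph{right uniform continuity of translation} on $C_c(G)$: the map $h\mapsto R_h f$ is continuous from $G$ into $\bigl(C_c(G),\|\cdot\|_\infty\bigr)$, and for $h$ in a fixed compact neighborhood of any $h_0$ the functions $R_h f$ share a common compact support $K$. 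The hard part is precisely this lemma, whose proof requires a compactness argument to upgrade pointwise continuity of $f$ to uniformity over its (enlarged) support; I would either prove it directly in this way or cite it from \cite{folland16}. Granting it, the estimate
\[
\bigl|\phi(h)-\phi(h_0)\bigr|\le \mu(K)\,\bigl\|R_h f-R_{h_0}f\bigr\|_\infty\longrightarrow 0
\]
shows $\phi$, and therefore $\Delta=c/\phi$, is continuous, completing the proof.
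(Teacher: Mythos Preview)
Your proof is correct and follows the standard textbook argument. Note, however, that the paper does not actually prove this proposition: it is quoted verbatim from Folland's \emph{A Course in Abstract Harmonic Analysis} (Proposition~2.24) as background material, with no proof supplied. Your argument is essentially the one Folland gives there --- the homomorphism property from the defining relation $\mu_{gh}=\Delta(g)\Delta(h)\mu$, the integral formula by the usual simple-function-to-$L^1$ bootstrap, and continuity via the continuity of $h\mapsto R_hf$ in the sup norm for $f\in C_c(G)$ --- so there is no meaningful comparison to make beyond confirming that your write-up is sound.
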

A group for which every left Haar measure is also a right Haar measure, hence for which $\Delta\equiv1$, is called unimodular. Large classes of groups are \emph{unimodular}\index{unimodular!group}, such as the Abelian, compact, nilpotent, semisimple and reductive groups. Many solvable groups, however, are not. Prototypical examples of non unimodular groups are the Iwasawa $NA$ groups, such as the affine ``$ax+b$'' group.
A practical recipe for the computation of modular functions is given by the  following proposition.
\begin{proposition}[Proposition 2.30, \cite{folland16}]\label{modularLIE}
If $G$ is a connected Lie group and $\Ad$ denotes the
adjoint action of $G$ on its Lie algebra, then  $\Delta(g)=\det(\Ad(g^{-1}))$.
\end{proposition}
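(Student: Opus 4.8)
The plan is to realize the left Haar measure through a left-invariant volume form and then track how right translation rescales it. First I would set $n=\dim G$, fix a nonzero element $\omega_e\in\Lambda^n\gg^*$, and spread it out by left translations to obtain a left-invariant $n$-form $\omega$ on $G$. Since $\omega$ is nowhere vanishing, the associated measure $\mu(E)=\int_E|\omega|$ is a left Haar measure, unique up to a positive constant by Theorem~\ref{weil}; any choice will do, since the modular function does not depend on the normalization of $\mu$.

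Next I would compute the pullback $R_g^*\omega$ under right translation $R_g\colon h\mapsto hg$. Because left and right translations commute, $R_g^*\omega$ is again left-invariant and of top degree, hence equals $c(g)\,\omega$ for a scalar $c(g)$ which is determined by evaluating at $e$. Writing $(R_g^*\omega)_e(v_1,\dots,v_n)=\omega_g((\de R_g)_e v_1,\dots,(\de R_g)_e v_n)$ and using left-invariance to transport $\omega_g$ back to $\omega_e$, the tangent map that appears is $(\de L_{g^{-1}})_g\circ(\de R_g)_e=\de(L_{g^{-1}}\circ R_g)_e$. Since $L_{g^{-1}}\circ R_g$ is exactly the conjugation $h\mapsto g^{-1}hg$, which fixes $e$, its differential at $e$ is $\Ad(g^{-1})$ by definition of the adjoint action. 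Therefore $(R_g^*\omega)_e(v_1,\dots,v_n)=\omega_e(\Ad(g^{-1})v_1,\dots,\Ad(g^{-1})v_n)=\det(\Ad(g^{-1}))\,\omega_e(v_1,\dots,v_n)$, so that $c(g)=\det(\Ad(g^{-1}))$.

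It then remains to pass from forms to the measure. Applying the change of variables $R_g\colon E\to Eg$ gives $\mu(Eg)=\int_{Eg}|\omega|=\int_E|R_g^*\omega|=|\det(\Ad(g^{-1}))|\,\mu(E)$. Comparing with the defining relation $\mu_g=\Delta(g)\mu$, where $\mu_g(E)=\mu(Eg)$, I read off $\Delta(g)=|\det(\Ad(g^{-1}))|$. Finally I would invoke connectedness of $G$: the image $\Ad(G)$ is a connected subgroup of $\Glg$ containing the identity, hence lies in the identity component, so $\det\Ad(g)>0$ for every $g\in G$. The absolute value may thus be dropped, yielding $\Delta(g)=\det(\Ad(g^{-1}))$.

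The main obstacle I expect is the bookkeeping in the second paragraph: correctly transporting the value of the left-invariant form from $g$ back to $e$ and recognizing the resulting tangent map as $\Ad(g^{-1})$ rather than $\Ad(g)$. A secondary subtlety is the orientation/absolute-value issue, which forces $|\det\Ad(g^{-1})|$ in the general locally compact-to-Lie passage and is removed only by appealing to connectedness.
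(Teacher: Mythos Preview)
Your argument is correct and is the standard proof of this fact, essentially the one given in Folland's book. Note, however, that the paper does not supply its own proof of this proposition: it is quoted as Proposition~2.30 from \cite{folland16} and used as a black box, so there is no proof in the paper to compare yours against. Your construction of the left-invariant top form, the identification of $(\de L_{g^{-1}})_g\circ(\de R_g)_e$ with $\Ad(g^{-1})$, the passage to the measure via change of variables, and the removal of the absolute value by connectedness are all correct and carefully handled.
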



The basic spaces $X$ and $\Xi$ in which we are interested are homogeneous spaces of the same group $G$. 
From the point of view of Analysis, the natural  question arises whether the homogeneous space $G/H$ admits a $G$-invariant Radon measure or not. The answer to this question is contained in Theorem~\ref{quotmeasure} below, which relates integration on $G$ to an iterated integral, first on $H$ and then on $G/H$. These formulae are achieved by means of the natural projection operator  $P:C_c(G)\rightarrow C_c(G/H)$, also known as Weil's mean opearator, defined by
\[
Pf(gH)=\int_H f(gh)\de h,
\]
which is well defined by the left invariance of $\de h$, the Haar measure on $H$. Furthermore, it is possible to see that $P$ is continuous and surjective. We are now in a position to state this classical result, also known as the Weil's decomposition theorem. Here $\Delta_G$ and $\Delta_H$ are the modular functions of $G$ and $H$, respectively.

\begin{theorem}[Theorem~2.51, \cite{folland16}]\label{quotmeasure}
Let $G$ be a locally compact group and $H$ a closed subgroup. There is a $G$-invariant Radon measure $\mu$ on $G/H$ if and only if $\Delta_G|_H=\Delta_H$. In this case, $\mu$ is unique up to a constant factor, and if the factor is suitably chosen then
	\[\int_Gf(g)\de g=\int_{G/H}Pf(gH)\de\mu(gH)=\int_{G/H}\int_H f(gh)\de h\de\mu(gH)\;, \]
	for every $f\in C_c(G)$.
\end{theorem}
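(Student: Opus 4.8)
The plan is to reduce the entire statement to a single property of the Weil mean operator $P\colon C_c(G)\to C_c(G/H)$, whose continuity and surjectivity have already been recorded above. Everything hinges on the following central lemma: \emph{if $\Delta_G|_H=\Delta_H$ and $f\in C_c(G)$ satisfies $Pf=0$, then $\int_G f(g)\,\de g=0$.} Granting this, the measure $\mu$ is obtained by transport of structure. Since $P$ is surjective, every $F\in C_c(G/H)$ equals $Pf$ for some $f$, and the lemma guarantees that $I(F):=\int_G f(g)\,\de g$ is independent of the chosen preimage $f$. The functional $I$ is plainly linear, and it is positive because a nonnegative $F$ admits a nonnegative preimage: choosing $\psi\ge 0$ with $P\psi>0$ on $\supp F$ and setting $f=\psi\cdot(F\circ\pi)/(P\psi\circ\pi)$ gives $f\ge 0$ and $Pf=F$. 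The Riesz representation theorem then produces a Radon measure $\mu$ on $G/H$ with $I(F)=\int_{G/H}F\,\de\mu$, which is exactly the claimed integration formula.

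For invariance I would only use that $P$ intertwines left translations, $P(L_{g_0}f)=L_{g_0}(Pf)$ with $(L_{g_0}f)(g)=f(g_0^{-1}g)$, an immediate consequence of the left invariance of $\de h$; then $\int_{G/H}L_{g_0}F\,\de\mu=I(P(L_{g_0}f))=\int_G L_{g_0}f\,\de g=\int_G f\,\de g=\int_{G/H}F\,\de\mu$ by left invariance of the Haar measure on $G$. For the converse direction, and simultaneously for uniqueness, I would start from an arbitrary $G$-invariant Radon measure $\mu'$ and consider $\Lambda(f)=\int_{G/H}Pf\,\de\mu'$ on $C_c(G)$: this is a positive, left-invariant functional, so by the essential uniqueness of Haar measure (Theorem~\ref{weil}) it equals $c\int_G f\,\de g$ for some $c>0$, proving both the formula and that $\mu'$ is a multiple of $\mu$. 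Testing this identity on the right translate $g\mapsto f(gh_0)$ for $h_0\in H$, and comparing the factor $\Delta_G(h_0)^{-1}$ produced on the $G$-side by Proposition~\ref{modular} with the factor $\Delta_H(h_0)^{-1}$ produced on the quotient side by the analogous scaling $P(f(\,\cdot\,h_0))=\Delta_H(h_0)^{-1}Pf$, forces $\Delta_G(h_0)=\Delta_H(h_0)$, which is the necessity of the compatibility condition.

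The hard part is the central lemma, and this is the only place where the hypothesis $\Delta_G|_H=\Delta_H$ is genuinely consumed. Given $f$ with $Pf=0$, I would fix an auxiliary $\phi\in C_c(G)$ with $P\phi\equiv 1$ on the compact set $\pi(\supp f)$, available by surjectivity of $P$, so that
\[
\int_G f(g)\,\de g=\int_G f(g)\,P\phi(gH)\,\de g=\int_G\!\int_H f(g)\,\phi(gh)\,\de h\,\de g.
\]
The plan is then to apply Fubini, perform the right translation $g\mapsto gh^{-1}$ (which by Proposition~\ref{modular} contributes $\Delta_G(h)^{-1}$) so as to shift the $h$-dependence from $\phi$ onto $f$, and interchange the order of integration. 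Replacing $h$ by $h^{-1}$ via the modular inversion identity on $H$, the accumulated weight becomes $\Delta_G(h)^{-1}\Delta_H(h)$, which collapses to $1$ \emph{exactly} when $\Delta_G|_H=\Delta_H$; the inner integral then recombines into $Pf$, and the expression reduces to $\int_G \phi(g)\,Pf(gH)\,\de g=0$. I expect the only delicate points to be the justification of the Fubini interchanges, which is handled by the joint compact support and continuity of the integrands, and the careful bookkeeping of modular factors so that the two scalings cancel in precisely the right place.
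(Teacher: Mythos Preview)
The paper does not prove this theorem: it is quoted verbatim as Theorem~2.51 from Folland's textbook and used as a black box, so there is no ``paper's own proof'' to compare against. Your sketch is the standard argument (and is essentially Folland's): reduce existence to the well-definedness lemma for the functional $I(F)=\int_G f$ with $Pf=F$, prove that lemma by the Fubini/auxiliary-$\phi$ trick where the modular hypothesis kills the weight, obtain $\mu$ via Riesz, read off invariance from $P\circ L_{g_0}=L_{g_0}\circ P$, and get both uniqueness and necessity by recognising $f\mapsto\int_{G/H}Pf\,\de\mu'$ as a left Haar functional and testing it on right translates. The outline is correct and the bookkeeping of modular factors is accurate; nothing is missing beyond routine verifications.
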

Hence, there always exists a $G$-invariant Radon measure on $G/H$ whenever $H$ is compact, since $\Delta_G|_H=\Delta_H\equiv 1$. Indeed, the image of $H$ under both modular functions is a compact subgroup of the multiplicative group of positive reals, namely $\{1\}$. 

Although  many homogeneous spaces do not admit invariant measures (for example $\R$ as a homogeneous space of the ``$ax+b$'' group), all of them admit strongly quasi-invariant measures. If $\mu$ is a measure on $X=G/H$ and we write $\mu^g(E)=\mu(gE)$ for $E\in\cB(X)$, we say that $\mu$ is a \emph{quasi-invariant measure}\index{quasi-invariant!measure} if all the $\mu^g$ are equivalent, that is, mutually absolutely continuous. We say that $\mu$ is \emph{strongly quasi-invariant}\index{strongly quasi-invariant!measure} if there exists a continuous function $\lambda:G\times G/H\to(0,+\infty)$ such that 
\[
\de\mu^g(x)=\lambda(g,x)\de\mu(x),
\qquad
x\in X,g\in G.
\]
In other words, the requirement is that the Radon-Nikodym derivative $(\de\mu^g/\de\mu)(x)$ is jointly continuous in $g$ and $x$. As mentioned, all homogeneous spaces admit strongly quasi-invariant measures (see Proposition~2.56 and Theorem~2.58 in \cite{folland16}).

\subsubsection{Measures on semisimple Lie groups of the noncompact type}
Let $G$ a semisimple Lie group. By Theorem~\ref{weil}, there exists a (left) Haar measure on $G$, unique up to multiplication by a positive constant. We recall that by Theorem~\ref{iwa} there exist  subgroups $K$, $A$, and $N$ of $G$ such that $G=KAN=NAK$. Since each subgroup carries a Haar measure, the natural question arises whether it is possible to write the Haar measure of $G$ using the Haar measures of the three subgroups
involved, which are all, individually, unimodular.

Since $K$ is compact, we normalize its Haar measure in such a way that the total measure is 1. The Haar measure on $A$ is obtained by starting from the (positive) measure that any Riemannian manifold inherits from its metric, see e.g.  Chap.~ I in~\cite{gga}. The invariant metric is obtained by taking the restriction to $\ga\times\ga$ of the Killing form, which is positive definite on $\gp\times\gp\supset\ga\times\ga$, whereby $\ga$ is identified  with the tangent space to $A$ at the identity. The standard normalization is to multiply the Riemannian measure  by $(2\pi)^{-\ell/2}$, where $\ell=\dim A$. As for $N$, we normalize its Haar measure $\de n$ so that
\[
\int_{\overline{N}} e^{-2\rho(H(\overline{n}))} \de \overline{n}=1,
\]
where $\overline{N}=\Theta(N)$ and $\de \overline{n}$ is the pushforward of $\de n$ under $\Theta$.
The convergence of the above integral is no trivial matter, and is discussed in detail in~\cite{gga}.

\begin{proposition}[Proposition 5.1, Chap. I, \cite{gga}]\label{haars} Let $\de k$, $\de a$ and $\de n$ be left-invariant Haar measures on $K$, $A$ and $N$, respectively. Then the left Haar measure $\de g$ on $G$ can be normalized  so that 
	\begin{align*}
		\int_Gf(g)\de g&=\int_{K\times A\times N}f(kan)e^{2\rho\log a}\de k\de a\de n\\
		&=\int_{N\times A\times K}f(nak)e^{-2\rho(\log a)}\de n\de a\de k\\
		&=\int_{A\times N\times K}f(ank)\de a\de n\de k
	\end{align*}
	for every $f\in C_c(G)$.
\end{proposition}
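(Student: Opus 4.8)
The plan is to prove the middle identity (the $nak$ decomposition, carrying the factor $e^{-2\rho(\log a)}$) directly by a Jacobian computation on the Iwasawa diffeomorphism of Theorem~\ref{iwa}, and then to deduce the other two identities from it by elementary manipulations. Throughout I identify the left Haar measures $\de k$, $\de a$, $\de n$, $\de g$ with the densities of left-invariant volume forms $\omega_K$, $\omega_A$, $\omega_N$, $\omega_G$ of top degree, since on a Lie group a left Haar measure is exactly $|\omega|$ for such a form.

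First I would compute the Jacobian of the multiplication map $\mu\colon N\times A\times K\to G$, $(n,a,k)\mapsto nak$. Using left-invariant frames on the three factors and on $G$, the density $\mu^*|\omega_G|$ at $(n,a,k)$ equals $|\det T|$ times $|\omega_N\wedge\omega_A\wedge\omega_K|$, where $T\colon\gn\oplus\ga\oplus\gk\to\gg$ is obtained by differentiating $\mu$ and left-translating back to the identity by $g^{-1}=k^{-1}a^{-1}n^{-1}$. A short computation gives $T(X,Y,Z)=\Ad(k^{-1})\Ad(a^{-1})X+\Ad(k^{-1})Y+Z$. Composing on the left with $\Ad(k)$, which has $|\det|=1$ on $\gg$ because $K$ is compact, turns $T$ into the block-diagonal map $X\mapsto\Ad(a^{-1})X$ on $\gn$, $Y\mapsto Y$ on $\ga$, $Z\mapsto\Ad(k)Z$ on $\gk$. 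The first block has determinant $\prod_{\alpha\in\Sigma^+}e^{-m_\alpha\alpha(\log a)}=e^{-2\rho(\log a)}$ by the definition~\eqref{rho} of $\rho$, the second is trivial, and the third has $|\det|=1$ again by compactness of $K$. Hence $|\det T|=e^{-2\rho(\log a)}$, which is precisely the middle formula once the normalization of $\de g$ is fixed accordingly.

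Next I would derive the first identity ($kan$, factor $e^{+2\rho(\log a)}$) from the middle one using that $G$, being semisimple, is unimodular, so $\int_G f(g)\,\de g=\int_G f(g^{-1})\,\de g$. Writing $g=nak$, substituting $g^{-1}=k^{-1}a^{-1}n^{-1}$ and changing variables $k\mapsto k^{-1}$, $a\mapsto a^{-1}$, $n\mapsto n^{-1}$ (each a Haar-preserving inversion on the unimodular groups $K$, $A$, $N$) turns $e^{-2\rho(\log a)}$ into $e^{+2\rho(\log a)}$ and reproduces the $kan$ integral. Finally, the third identity ($ank$, no exponential factor) follows from the middle one by the substitution $n=a\tilde n a^{-1}$ inside $S=AN$: since $\Ad(a)$ scales the volume of $\gn$ by $e^{2\rho(\log a)}$ one has $\de n=e^{2\rho(\log a)}\,\de\tilde n$, while $nak=a\tilde n k$, so the two exponential factors cancel. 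Conceptually these manipulations merely record that $\de g$ equals $\de k\,\de_r s$ and $\de_l s\,\de k$ for the right and left Haar measures of the solvable Iwasawa group $S=AN$, whose modular function is $e^{-2\rho(\log a)}$.

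The main obstacle is the bookkeeping in the Jacobian step: one must set up the left-invariant frames carefully so that the differential of $\mu$, after translation to $\gg$, really becomes block-diagonal following the harmless twist by $\Ad(k)$, and one must invoke compactness of $K$ to discard the two determinant-one contributions. Everything else is elementary change of variables; the only genuinely structural input is that $\Ad(a)$ preserves each root space and multiplies $\gg_\alpha$ by $e^{\alpha(\log a)}$, which is exactly what converts the geometric Jacobian into the analytic factor $e^{\pm2\rho(\log a)}$.
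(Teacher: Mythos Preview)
The paper does not supply its own proof of this proposition: it is quoted directly from Helgason's \emph{Groups and Geometric Analysis} (Proposition~5.1, Chap.~I in~\cite{gga}) and used as a black box throughout. Your sketch is essentially the standard Jacobian argument for the Iwasawa integration formula and is correct. One small point worth making explicit in the Jacobian step is \emph{why} $\Ad(k)\circ T$ is genuinely block-diagonal (rather than merely block-triangular) with respect to $\gn\oplus\ga\oplus\gk$: this is because $\Ad(k)$ preserves $\gk$, so $\Ad(k)Z\in\gk$ for $Z\in\gk$ and the off-diagonal contributions vanish. With that noted, your determinant computation $|\det T|=e^{-2\rho(\log a)}$ is rigorous, and the derivations of the $kan$ formula via unimodularity and inversion, and of the $ank$ formula via the substitution $n=a\tilde n a^{-1}$ with $\de n=e^{2\rho(\log a)}\de\tilde n$, are both clean and correct.
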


The case of the group $AN$ deserves a separate comment. We recall by Sect.~\ref{sec:prelim} that $AN$ is in fact a semidirect product since $A$ acts on $N$  by conjugation. Furthermore, for any $H\in\ga$ and any root vector $X_\alpha\in\gg_\alpha$  it holds
\[
\Ad(\exp H)(X_\alpha)={\E}^{\ad H}(X_\alpha)=\sum_0^{\infty}\frac{(\ad H)^k}{k!}X_\alpha
={\E}^{\alpha(H)}X_\alpha.
\]
It follows that upon choosing a basis of $m_\alpha$ root vectors for each positive root $\alpha$ it is 
\[
\det\Ad(\exp H)|_{\gn}=\prod_{\alpha>0}{\E}^{m_\alpha\alpha(H)}
\]
or,  using~\eqref{rho},
\[
\det\Ad a|_{\gn}={\E}^{2\rho(\log{a})}.
\]
Proposition~\ref{modularLIE} now entails that the modular function of the $AN$ Iwasawa group is
\begin{equation}\label{modAN}
\Delta(na)={\E}^{-2\rho(\log{a})}.
\end{equation}
Indeed, in the computation of $\det\Ad(na)$ on $\gn+\ga$, all is relevant is the action of $\Ad a$ on $\gn$ because the action of $\Ad a$ is unimodular on $\ga$ since  $A$ is Abelian,  the action of
$\Ad n$ is unimodular on $\gn$ because  $N$ is nilpotent and that of $\Ad n$ on $\ga$  is again unimodular
because its projection on $\ga$ is the identity (see also Cor.~5.2 in Chap.~I in \cite{gga}).
\subsubsection{Measures on $X$} 

In order to do Analysis on the symmetric space $X$ it is important to introduce some basic functions spaces and differential operators. The reader is referred to  Chap.~II in~\cite{gga}.

A quick way to introduce differential operators on $X$ is to say that $D$ is such an operator if it is a linear mapping of $C_c^{\infty}(X)$ that decreases supports. Such operators have local nature, in the sense that  it is possible to find for any  coordinate patch $(\cU,\phi)$ in $X$ and any open set $\cW$ with compact closure in $\cU$ a finite number of smooth functions $a_\alpha$ on $\cW$ such that
\[
Df=\sum_\alpha a_\alpha(D^\alpha(f\circ\phi^{-1}))\circ\phi
\]
for any $f\in C^{\infty}(\cW)$, where 
\[
D^\alpha=\frac{\partial^{|\alpha|}}{\partial x_1^{\alpha_1}\partial x_2^{\alpha_2}\dots\partial x_d^{\alpha_d}}
\]
is the standard partial derivative operator in $\R^d$ associated with the multi-index $\alpha\in\Z_+^d$. Because of this local nature, it is then possible to extend any differential operator $D$ to $C^{\infty}(X)$.

 On any differentiable manifold, hence on a symmetric space $X$, two are the most relevant spaces to consider if  distribution theory is among the desirable targets. These are  the space of smooth  complex valued functions $\mathcal{E}(X)$ on $X$ and the space $\mathcal{D}(X)$ of  smooth  complex valued functions
 with compact support on $X$. When this notation, due to Schwartz, is adopted, it is meant that these vector spaces are endowed with  suitable topologies, see Chap.~II in~\cite{gga} for the details. We stress that in our analysis the topologies on $\mathcal{E}(X)$ and $\mathcal{D}(X)$ do not enter into play. 
 

Now, our purpose is to determine an explicit $G$-invariant measure on the symmetric space $X=G/K$, whose existence is guaranteed by the fact that $K$ is compact (see the comment after Theorem~\ref{quotmeasure}).  Recall that, by Proposition~\ref{haars},  if $g=nak$, then the Haar measure of $G$ can be normalized so that 
\begin{equation*}
	\de g=e^{-2\rho(\log a)}\de n\de a\de k,
\end{equation*}
where $\de k$, $\de a$, and $\de n$ are the Haar measures on $K$, $A$ and $N$ that have been fixed in the previous paragraph. 

We endow $X$  with the $G$-invariant  measure $\de x$ obtained as the pushforward of  $\de g$ under the canonical projection $G\to G/K$. Thus, for any smooth compactly supported function $f\in\mathcal{D}(X)$ 
\[
\int_X f(x)\de x=\int_G f(g[o])\de g=\int_{NA} f(na[o])e^{-2\rho(\log a)}\de n\de a.
\]
We henceforth denote by $L^2(X)$ the Lebsegue space of square integrable (equivalence classes of)  functions with respect to this measure.
The \emph{quasi-regular representation}\index{quasi-regular!representation} $\pi$ of $G$ on $L^2(X)$ is then defined in the usual way, namely
\[
\pi(g)f(x):=f(g^{-1}[x]),\qquad f\in L^2(X),\,g\in G.
\]
It is a unitary non-irreducible representation. Actually, it is possible to construct a family of Hilbert spaces in which $L^2(X)$ can be decomposed as a direct integral, whereby  the restriction of $\pi$ to each of them is irreducible. These are the spherical principal series representations, discussed in  Chap.~VI in \cite{gass}. It is also well known that $\pi$ is not square integrable.

\subsubsection{Measures on the Boundary} 
We shall now define positive measures on the boundary $B$ using its various possible parametrizations.
Since $K$ and $M$ are compact subgroups of $G$, there exists a probability $K$-invariant measure $\mu^{o}$ on $B=K/M$, see the comment below Theorem~\ref{quotmeasure}. The choice of this measure is such that  Weil's decomposition  holds, assuming that we normalize the Haar measure of $M$ in such a way that the total measure is $1$.  For every other choice of the reference point $x\in X$ the analogous objects $K_x$, $M_x$ and $\mu^x$ can be introduced. The relation between $\mu^o$ and $\mu^x$ can be determined explicitly.
We consider the diffeomorphism $T_x\colon K\to K_x$ defined by $k\mapsto s_o(x)ks_o(x)^{-1}$. Its restriction to $M$ is a diffeomorphism between $M$ and $M_x$. Hence, $T_x$ induces the diffeomorphism $\tilde{T}_x\colon K/M\to K_x/M_x$ defined by
\[
\tilde{T}_x(kM)=T_x(k)M_x=s_o(x)ks_o(x)^{-1}M_x=s_o(x)kMs_o(x)^{-1}\;.\] 
Let $(\tilde{T}_x)_*(\mu^o)$ be the pushfoward of the measure  $\mu^o$ under $\tilde{T}_x$. Clearly, $(\tilde{T}_x)_*(\mu^o)$ is a $K_x$-invariant probability measure on $K_x/M_x$ and therefore $\mu^x=(\tilde{T}_x)_*(\mu^o)$. As we saw  in~\eqref{identification}, $K_x/M_x$ is diffeomorphic to the boundary $K/M$ through the map induced by $\kappa_o$.  Therefore, we can consider the following $K_x$-invariant probability measure on the boundary $B=K/M$
\[\nu^x:=(\kappa_o)_*(\mu^x).\]
It is worth observing that $\nu^o=\mu^o$ and the following relation follows
\[\nu^x=(\kappa_o\circ\tilde{T}_x)_*(\nu^o)\;.\]

\begin{lemma}\label{leminv}
	The measure $\nu^o$ is $G$-quasi-invariant. Let $F\in C(K/M)$ and $g\in G$,
	\begin{equation}\label{nuinv}
	\int_{K/M} F(g^{-1}\langle kM\rangle )\de\nu^o(kM)=\int_{K/M} F(kM)e^{-2\rho(H_o(gk))}\de\nu^o(kM).
	\end{equation}
\end{lemma}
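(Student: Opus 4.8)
The plan is to push everything down to an integral over $K$ against normalized Haar measure $\de k$, establish there a single clean invariance identity, and then read off \eqref{nuinv} by a pushforward argument. Throughout I regard $F\in C(K/M)$ as a right-$M$-invariant function on $K$, so that $\int_{K/M}F\,\de\nu^o=\int_K F(k)\,\de k$ by Weil's formula, since $\nu^o=\mu^o$ is the pushforward of $\de k$ under $K\to K/M$. I would first record the needed $M$-invariance: writing the Iwasawa decomposition $gk=\kappa_o(gk)\exp(H_o(gk))n$ as in \eqref{IWAx}, and using that $M$ normalizes $N$ by \eqref{MnormN} and centralizes $A$, one gets $\kappa_o(gkm)=\kappa_o(gk)m$ and $H_o(gkm)=H_o(gk)$ for $m\in M$. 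Hence both $k\mapsto F(\kappa_o(gk))$ and $k\mapsto e^{-2\rho(H_o(gk))}$ descend to $K/M$; in particular the map $\phi_g\colon kM\mapsto\kappa_o(gk)M=g\langle kM\rangle$ of \eqref{GonB} is a well-defined diffeomorphism of $K/M$ with inverse $\phi_{g^{-1}}$, since the boundary action is a group action.

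The core of the proof is the identity
\[
\int_K F(\kappa_o(gk))\,e^{-2\rho(H_o(gk))}\,\de k=\int_K F(k)\,\de k.
\]
To prove it I would test against a product function on $G$. Given $\eta\in C_c(A)$ and $\chi\in C_c(N)$ with $\int_A\eta(a)e^{2\rho(\log a)}\,\de a\neq0$ and $\int_N\chi\,\de n\neq0$, set $\Phi(kan)=F(k)\eta(a)\chi(n)$ via the $KAN$ decomposition, so $\Phi\in C_c(G)$. By left invariance of Haar measure, $\int_G\Phi(gx)\,\de x=\int_G\Phi(x)\,\de x$, and the right-hand side factorizes by Proposition~\ref{haars} as $\bigl(\int_K F\,\de k\bigr)\bigl(\int_A\eta(a)e^{2\rho(\log a)}\de a\bigr)\bigl(\int_N\chi\,\de n\bigr)$. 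For the left-hand side I would write $x=kan$, decompose $gk=\kappa_o(gk)\exp(H_o(gk))\tilde n$, and use that $A$ normalizes $N$ to read off the $KAN$ decomposition of $gkan$ as $\kappa_o(gk)\cdot\exp(H_o(gk))a\cdot n^\ast$ with $n^\ast\in N$. The $N$-integral then collapses to $\int_N\chi\,\de n$ by left invariance of $\de n$, and the $A$-integral, after the substitution $b=\exp(H_o(gk))a$, produces exactly the factor $e^{-2\rho(H_o(gk))}$ together with $\int_A\eta(a)e^{2\rho(\log a)}\de a$. Cancelling the two nonzero constants yields the displayed identity.

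Finally I would convert this into \eqref{nuinv}. Descending the core identity to $K/M$ gives $(\phi_g)_*\bigl(e^{-2\rho(H_o(g\,\cdot))}\nu^o\bigr)=\nu^o$, whence $(\phi_{g^{-1}})_*\nu^o=e^{-2\rho(H_o(g\,\cdot))}\nu^o$ after applying $(\phi_{g^{-1}})_*$ and using $\phi_{g^{-1}}\circ\phi_g=\mathrm{id}$. Since this Radon--Nikodym factor is continuous and strictly positive, $\nu^o$ is $G$-quasi-invariant; and the pushforward formula $\int_{K/M}(F\circ\phi_{g^{-1}})\,\de\nu^o=\int_{K/M}F\,\de\bigl((\phi_{g^{-1}})_*\nu^o\bigr)$ then reads precisely as $\int_{K/M}F(g^{-1}\langle kM\rangle)\,\de\nu^o(kM)=\int_{K/M}F(kM)e^{-2\rho(H_o(gk))}\,\de\nu^o(kM)$.

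I expect the main obstacle to be the bookkeeping in the core identity: correctly threading the $A$-conjugation of $N$ through the $KAN$ decomposition of $gkan$ so that the $N$- and $A$-integrals genuinely decouple, and tracking the sign of $\rho(H_o(gk))$ across the substitution in the $A$-integral (the $e^{2\rho}$ weight in Proposition~\ref{haars} is what turns into $e^{-2\rho}$). The $M$-invariance verifications are routine but must precede the descent to $K/M$ for the statement to even make sense.
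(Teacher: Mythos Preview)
Your proof is correct and follows essentially the same route as the paper: establish the integral identity on $K$, verify the $M$-invariance of $k\mapsto\kappa_o(gk)M$ and $k\mapsto H_o(gk)$, and descend to $K/M$ via Weil's formula. The only difference is that the paper simply cites the core identity on $K$ (Lemma~5.19, Chap.~I of \cite{gass}), whereas you derive it from scratch via the $KAN$ factorization of Haar measure in Proposition~\ref{haars}; your derivation is precisely the standard proof of that lemma, so the argument is more self-contained but not structurally different.
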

\begin{proof}
	By Lemma 5.19 in Chap.I in \cite{gass}, for every $H\in C(K)$ and $g\in G$,
	\begin{equation}\label{gquasinu}
	\int_{K} H(\kappa_{o}(g^{-1}k))\de k=\int_{K} H(k)e^{-2\rho(H_o(gk))}\de k.
	\end{equation} 
	A function $F\in C(K/M)$ will now be regarded as an $M$-right invariant continuous function on $K$. By  our choice of $\nu^o$, Theorem~\ref{quotmeasure} holds and hence
	\begin{align*}
		\int_{K} F(k)\de k&=\int_{K/M}\int_M F(kM\,m)\de m\de\nu^o(kM)\\
		&=\int_{K/M} F(kM)\int_M\de m\de\nu^o(kM)\\
		&=\int_{K/M} F(kM)\de\nu^o(kM),
	\end{align*}
	where we have used the normalization of the Haar measure of $M$.
	The function $k\mapsto F(g^{-1}\langle k\rangle)=F(\kappa_{o}(g^{-1}k))$ is $M$-invariant by $\kappa_{o}(g^{-1}km)=\kappa_{o}(g^{-1}k)m$. Since $m\in M$ commutes with $A$ and $N$, 
	\[
	gkm\in \kappa_o(gk)m\exp(H_o(gk))N\
	\]
	and so $k\mapsto H_o(gk)$ is $M$-invariant. It follows that  $k\mapsto F(k)e^{-2\rho(H_o(gk))}$ is also $M$-invariant. The assertion 
	follows by  applying~\eqref{gquasinu} to $F$ in place of $H$ and then rewriting the integrals over $K$ of the $M$-invariant functions as integrals over $K/M$ w.r.t. $\nu^o$  as before. 
\end{proof}

Now we investigate the relation between the different boundary measures introduced above. If $F\in C(K/M)$ and $x\in X$, then
\begin{align*}
	\int_{K/M} F(kM)\de\nu^{x}(kM)&=\int_{K/M} F(\kappa_{o}(\tilde{T}_x(kM)))\de\nu^{o}(kM)\\
	&=\int_{K/M} F(\kappa_{o}(s_o(x)k)M)\de\nu^{o}(kM)\\
	&=\int_{K/M} F(kM)e^{-2\rho(H_{o}(s_o(x)^{-1}k))}\de\nu^{o}(kM)\\
	&=\int_{K/M} F(kM)e^{2\rho(A_{o}(x,kM))}\de\nu^{o}(kM)
\end{align*}
by Lemma \ref{leminv} and then applying item (i) of Lemma~\ref{propA} together with~\eqref{eq:fondAH} , since
\[
-H_{o}(s_o(x)^{-1}k)=A_o(k^{-1}s_o(x))=A_o(s_o(x)[o],kM)=A_{o}(x,kM).
\]
By expressing the integral of a function on $K/M$ with respect to either $\nu^{x}$ or 
$\nu^{y}$ as above and then using~\eqref{cocycl} in the form
\[
A_{o}(x,kM)=A_o(y,kM)+A_{y}(x,kM),
\]
the Radon-Nikodym derivative between the measures $\nu^{x}$ and  $\nu^{y}$ is then
\begin{equation}\label{eq:radonik}
\frac{\de\nu^{x}}{\de\nu^{y}}(kM)=e^{2\rho(A_{y}(x,kM))}.
\end{equation}

Let $x\in X$, $g\in G$ and $F\in C(K/M)$. Using first~\eqref{eq:radonik} with $y=o$ and then~\eqref{nuinv}
\begin{align*}
	\int_{K/M}F(g^{-1}\langle kM\rangle )\de\nu^x(kM)&=\int_{K/M}F(g^{-1}\langle kM\rangle)e^{2\rho(A_{o}(x,kM))}\de \nu^o(kM)\\
	&=\int_{K/M}F(kM)e^{2\rho(A_{o}(x,g\langle kM\rangle))}e^{-2\rho(H_{o}(gk))}\de \nu^o(kM).
\end{align*}
 Now observe that, by \eqref{Abello} and \eqref{ginvA},
\begin{align*}
	A_{o}(x,g\langle kM\rangle)-H_{o}(gk)&=A_{g^{-1}[o]}(g^{-1}[x],kM)+A_{o}(k^{-1}g^{-1})\\
	&=A_{g^{-1}[o]}(g^{-1}[x],kM)+A_{o}(g^{-1}[o],kM)\\
	&=A_{o}(g^{-1}[x],kM),
\end{align*}
the latter equality being just~\eqref{cocycl} from Lemma~\ref{propA}.  
Hence, we obtain a sort of dual relation between the $G$-action on the boundary and that on the reference points of the boundary measures, namely
\begin{equation}\label{ginv}
\int_{K/M}F(g^{-1}\langle kM\rangle)\de\nu^x(kM)=\int_{K/M}F(kM)\de\nu^{g^{-1}[x]} (kM).
\end{equation}

\subsubsection{Measures on $\Xi$}
Finally, in order to develop the theory in which we are interested, we need to introduce a $G$-invariant measure on $\Xi$. We denote by $\sigma$ the measure on $A$ with density $e^{2\rho(\log a)}$ with respect to the Haar measure $\de a$. For every $x\in X$, we can endow $\Xi$ with the measure $\de\xi$ obtained as the pushforward  of the measure $\nu^x\otimes\sigma$ on $K/M\times A$ by means of the map $\Psi_x$, i.e.
\[
\de\xi={\Psi_x}_*(\nu^x\otimes\sigma).
\]
It turns out that $\de\xi$ is independent of the choice of $x\in X$. We denote by $L^1(\Xi)$ and $L^2(\Xi)$ the spaces of absolutely integrable functions and square-integrable functions with respect to the measure $\de\xi$, respectively. By definition, for every $F\in L^1(\Xi)$
\begin{align*}
	\int_{\Xi} F(\xi)\de\xi&=\int_{K/M\times A}(F\circ\Psi_x)(kM,a)\de(\nu^x\otimes\sigma)(kM,a)
	\\&=\int_{K/M\times A}(F\circ\Psi_x)(kM,a)e^{2\rho(\log a)}\de\nu^x(kM)\de a\;.
\end{align*}
It is easy to verify that $\de\xi$ is $G$-invariant. 
We point out that Helgason introduced this measure w.r.t. $o\in X$, see Lemma~3.1 in Chap.~II in \cite{gass}. Since in our treatment it is important to change the reference point the expression above suits our needs.

The group $G$  acts on $L^2(\Xi)$ by the quasi-regular representation $\hat{\pi}\colon G\rightarrow \mathcal{U}(\Le^2(\Xi))$ defined by
\[
\hat{\pi}(g)F(\xi):=F(g^{-1}.\xi),\qquad F\in\Le^2(\Xi),\:g\in G.
\]
Equivalently, given $x\in X$, by \eqref{eq:actiongxiparametrized}
\begin{equation}
\label{eq:actiongxiparametrizedfunction}
(\hat{\pi}(g)F)\circ \Psi_x(kM,a)=F\circ\Psi_{g^{-1}[x]}(g^{-1}\langle kM\rangle,a),
\end{equation}
for every $(kM,a)\in K/M\times A$ and $g\in G$.

We need to introduce some more notation.
We denote by $\Delta^{-\frac{1}{2}}$ the map on $K/M\times A$ defined by 
 \[
 \Delta^{-\frac{1}{2}}(kM,a)={\E}^{\,\rho(\log{a})}.
\] The reason for such notation resides in the fact that this function has the same expression of the inverse of the square root of the modular function of the $AN$ Iwasawa group, see~\eqref{modAN}. \\
Finally, for every $x\in X$, we introduce the space $L_x^2(K/M\times A)$  of square-integrable functions on $K/M\times A$ w.r.t. the measure $\nu^x\otimes{\rm d} a$. For every $F\in L^2(\Xi)$, we denote by $\Psi^*_x F$ the $(L^2(\Xi), L_x^2(K/M\times A))$-pull-back of $F$ by $\Psi_x$, that is, we introduce  the unitary operator $\Psi^*_x\colon L^2(\Xi)\to L_x^2(K/M\times A)$ given by
\[
\Psi^*_x F(kM,a)=(\Delta^{-\frac{1}{2}}\cdot(F\circ \Psi_x))(kM,a)
\]
for almost every $(kM,a)\in K/M\times A$. In order to see that $\Psi^*_x$ is  unitary, observe that for every $F\in L^2(\Xi)$ we have that
	\begin{align*}
	&\int_{K/M\times A}|\Psi^*_xF(kM,a)|^2\de\nu^x(kM)\de a\\
	&=\int_{K/M\times A}|(\Delta^{-\frac{1}{2}}\cdot(F\circ\Psi_x))(kM,a)|^2\de\nu^x(kM)\de a
	\\&=\int_{K/M\times A}|(F\circ\Psi_x)(kM,a)|^2e^{2\rho(\log a)}\de\nu^x(kM)\de a\\
	&=\int_{\Xi} |F(\xi)|^2\de\xi=\|F\|^2_{L^2(\Xi)},
	\end{align*}
so that $\Psi^*_x$ is an isometry  from $L^2(\Xi)$ into $L_x^2(K/M\times A)$. Surjectivity is also clear.

\subsection{The Helgason-Fourier Transform}

The Helgason-Fourier transform was defined by Helgason in analogy with the Fourier transform on Euclidean spaces in polar coordinates. We briefly recall its definition and its main features. 
\begin{definition}[\S 1, Chap.~III, \cite{gass}]
	The \textit{Helgason-Fourier transform}\index{Helgason-Fourier!transform} of $f\in \textcolor{black}{\mathcal{D}(X)}$ is the function $\mathcal{H}f:K/M\times\mathfrak{a}^*\longrightarrow\mathbb{C}$ defined by
	\[	\mathcal{H}f(kM,\lambda)=\int_ Xf(x)e^{(-i\lambda+\rho)(A_o(x,kM))}\de x.
	\]
\end{definition}

As the Euclidean Fourier transform, the Helgason-Fourier transform extends to a unitary operator on $\Le^2(X)$. The Plancherel measure involves the \emph{Harish-Chandra $\mathbf{c}$ function}\index{Harish-Chandra $\mathbf{c}$ function}, a cornerstone in the analysis on symmetric spaces \cite{chandraI58}, \cite{chandraII58}. It is a meromorphic function 
 $\mathbf{c}\colon\mathfrak{a}_c^*\to\C$ defined on the complexified dual space $\mathfrak{a}_c^*$ for which various formulae are available (see e.g.~\cite{jewel}). It may thus be restricted to the real space $\mathfrak{a}^*$. 
As an example, in the case of the unit disk, if $\Re(i\lambda)>0$, then
\[
\mathbf{c}(\lambda)=\pi^{-1/2}\frac{\Gamma(\frac{1}{2}i\lambda)}{\Gamma(\frac{1}{2}(i\lambda+1))},
\]
so that
\[
|\mathbf{c}(\lambda)|^{-2}=\frac{\pi\lambda}{2}\tanh\left(\frac{\pi\lambda}{2}\right).
\]

We denote by  
$L_{o,\textbf{c}}^2(K/M\times\mathfrak{a}^*)$ the space of the functions on $K/M\times\mathfrak{a}^*$ that are square-integrable w.r.t. the measure $w^{-1}\left|\textbf{c}(\lambda)\right|^{-2}\de \nu^o\de \lambda$, where   $w$ stands for the cardinality of the Weyl group $W$. 
\begin{proposition}
 		For every $f_1, f_2\in\cD(X)$
\begin{equation}\label{eq:plancherelformula}
\int_x f_1(x)\overline{f_2(x)}{\rm d}x=\int_{\mathfrak{a}^*\times K/M}\cH f_1(kM,\lambda)\overline{\cH f_2(kM,\lambda)}\de\nu^o(kM)\frac{\de\lambda}{w|\mathbf{c}(\lambda)|^2}.
\end{equation}
\end{proposition}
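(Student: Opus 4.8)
The plan is to reduce \eqref{eq:plancherelformula} to the spherical (Harish-Chandra) Plancherel theorem, which is the one genuinely deep input and the only source of the factor $|\mathbf{c}(\lambda)|^{-2}$. The route is through the pointwise inversion formula
\[
f(x)=\frac{1}{w}\int_{\mathfrak{a}^*}\int_{K/M}\cH f(kM,\lambda)\,e^{(i\lambda+\rho)(A_o(x,kM))}\,\de\nu^o(kM)\,\frac{\de\lambda}{|\mathbf{c}(\lambda)|^2},\qquad f\in\cD(X).
\]
Granting this, \eqref{eq:plancherelformula} follows at once: I would apply the inversion formula to $f_1$, multiply by $\overline{f_2(x)}$, integrate in $x\in X$ and invoke Fubini (legitimate since $f_2\in\cD(X)$); the resulting inner integral $\int_X\overline{f_2(x)}\,e^{(i\lambda+\rho)(A_o(x,kM))}\,\de x$ equals $\overline{\cH f_2(kM,\lambda)}$, because $\lambda$ and the values of $A_o$ are real, and this is precisely the right-hand side of \eqref{eq:plancherelformula}.

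To prove the inversion formula I would first settle the case $x=o$. Since $A_o(o,kM)=0$, the kernel is then $1$ and the claim reduces to
\[
f(o)=\frac{1}{w}\int_{\mathfrak{a}^*}\Bigl(\int_{K/M}\cH f(kM,\lambda)\,\de\nu^o(kM)\Bigr)\frac{\de\lambda}{|\mathbf{c}(\lambda)|^2}.
\]
By Fubini and the definition of $\cH$, the inner integral equals $\int_X f(x)\,\varphi_\lambda(x)\,\de x$, where $\varphi_\lambda(x):=\int_{K/M}e^{(-i\lambda+\rho)(A_o(x,kM))}\de\nu^o(kM)$ is the elementary spherical function. As $\varphi_\lambda$ is $K$-invariant, this integral depends only on the radialisation $f^\natural(x)=\int_K f(k[x])\,\de k$ and is exactly its spherical transform; moreover $f^\natural(o)=f(o)$ because $k[o]=o$. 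Hence the displayed identity is Harish-Chandra's spherical inversion formula applied to the radial function $f^\natural$ (see \cite{gga}, \cite{gass}). To pass to an arbitrary $x=g[o]$, I would apply the case just proved to the translate $\pi(g^{-1})f\in\cD(X)$ and use the $G$-covariance of $\cH$, which follows from the identities for $A_o$ collected in Lemma~\ref{propA} (in particular \eqref{ginvA} and \eqref{cocycl}); this transports the inversion at $o$ to the inversion at $g[o]$ with the correct kernel $e^{(i\lambda+\rho)(A_o(x,kM))}$, and $g[o]$ ranges over all of $X$.

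The genuine obstacle is the spherical inversion/Plancherel theorem itself, which I am content to cite: its proof rests on the fine asymptotics of $\varphi_\lambda$ and on the Gindikin--Karpelevich product formula for $\mathbf{c}(\lambda)$, both of which lie well outside the elementary toolkit assembled so far. This is also where the two arithmetic features of \eqref{eq:plancherelformula} are accounted for: the density $|\mathbf{c}(\lambda)|^{-2}$ is the Plancherel density of the spherical transform, and the normalisation $w^{-1}$ compensates the Weyl-group redundancy $\varphi_\lambda=\varphi_{s\cdot\lambda}$ ($s\in W$) incurred by integrating over all of $\mathfrak{a}^*$ rather than over a single Weyl chamber.

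Finally, as a sanity check and an alternative viewpoint, I note that $\cH$ can be written as a Euclidean Fourier transform of the horocyclic Radon transform: using the $K$-invariance of $\de x$, the Iwasawa form of the measure, and the Jacobian $\det\Ad a|_{\gn}=e^{2\rho(\log a)}$ of conjugation by $a$ on $N$, one obtains
\[
\cH f(kM,\lambda)=\int_A \cR f\bigl(\Psi_o(kM,a)\bigr)\,e^{(-i\lambda+\rho)(\log a)}\,\de a .
\]
In this form the $\de\lambda$-integration is governed by the Euclidean Plancherel theorem on $A\cong\mathfrak{a}^*$, so the entire difficulty of \eqref{eq:plancherelformula} is concentrated in the weight $|\mathbf{c}(\lambda)|^{-2}$; this is exactly the failure of the bare Radon transform to be an isometry, the discrepancy that the pseudo-differential operator $\Lambda$ is designed to correct in the main unitarization theorem.
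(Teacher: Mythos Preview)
Your proposal is correct and follows the standard route to the Plancherel formula: reduce to the inversion formula, prove the latter at $o$ by radialising and invoking Harish-Chandra's spherical inversion theorem, then transport to a general point via $G$-covariance and the identities in Lemma~\ref{propA}. The identification of the spherical Plancherel theorem as the single deep input, and of $w^{-1}$ and $|\mathbf c(\lambda)|^{-2}$ as accounting respectively for the Weyl redundancy and the Plancherel density, is exactly right.

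The paper itself does not supply a proof of this proposition: it is stated as a known result and attributed implicitly to Helgason's monographs \cite{gga}, \cite{gass}, which the paper is synthesising. So there is no ``paper's own proof'' to compare against; your sketch is precisely the argument one finds in those references (Theorem~1.3 and its proof in Chap.~III of \cite{gass}). Your closing remark about the factorisation $\cH=(I\otimes\mathcal F)\circ\Psi_o^*\circ\mathcal R$ is also the content of Proposition~\ref{fst} in the paper, used there for the same purpose you indicate.
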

The rest of the paragraph is devoted to state the Plancherel theorem for the Helgason-Fourier transform. 
\vskip0.2truecm
{\bf Property $\sharp$.} We say that a function $F\in L_{o,\textbf{c}}^2(K/M\times\mathfrak{a}^*)$ satisfies Property $\sharp$ if for every $x\in X$ the function 
\begin{equation}\label{invhf}
\mathfrak{a}^*\ni\lambda\longmapsto\int_{K/M}e^{(\rho+i\lambda)(A_o(x,kM))}F(kM,\lambda)\de\nu^o(kM)
\end{equation}
is $W$-invariant almost everywhere (see the comments after~\eqref{WonSigma} for the $W$-action on $\ga^*$). 
\vskip0.2truecm
We denote by $L_{o,\textbf{c}}^2(K/M\times\mathfrak{a}^*)^\sharp$ the space of functions $F$ in $L_{o,\textbf{c}}^2(K/M\times\mathfrak{a}^*)$ satisfying Property $\sharp$.
We observe that the integral in \eqref{invhf} is absolutely convergent for almost every $\lambda\in\mathfrak{a}^*$. By Fubini theorem, for every $F\in L_{o,\textbf{c}}^2(K/M\times\mathfrak{a}^*)$ we have that
\[
\|F\|_{L_{o,\textbf{c}}^2(K/M\times\mathfrak{a}^*)}^2=\int_{\mathfrak{a}^*}\int_{ K/M}|F(kM,\lambda)|^2\de\nu^o(kM)\frac{\de\lambda}{w|\mathbf{c}(\lambda)|^2}<+\infty.
\]
Thus, the function $F(\cdot,\lambda)$ is in $L^2(K/M,\nu^o)\subseteq L^1(K/M,\nu^o)$ for almost every $\lambda\in\mathfrak{a}^*$ and, since $\rho(A_o(x,\cdot))$ is bounded on $K/M$, the integrability properties of $F(\cdot,\lambda)$ continue to hold for the function $e^{(\rho+i\lambda)(A_o(x,\cdot))}F(\cdot,\lambda)$. 

Every function $F\in L_{o,\textbf{c}}^2(K/M\times\mathfrak{a}^*)^\sharp$ is uniquely determined by its restriction on $K/M\times\ga_+^*$. Here $\ga_+^*$ denotes the \emph{positive Weyl chamber}\index{positive chamber!Weyl}
\[
\mathfrak{a}^*_+=\{\lambda\in\mathfrak{a}^* : A_{\lambda}\in\mathfrak{a}^+\},
\]
where $A_{\lambda}$ represents $\lambda$ via the Killing form, in the sense that $\lambda(H)=B(A_{\lambda},H)$. If we suppose that $F,G\in L_{o,\textbf{c}}^2(K/M\times\mathfrak{a}^*)^\sharp$ are such that $F_1|_{K/M\times\ga_+^*}=F_2|_{K/M\times\ga_+^*}$, then
\begin{align*}
	&\int_{K/M}e^{(\rho+is\lambda)(A_o(x,kM))}(F_1-F_2)(kM,s\lambda)\de\nu^o(kM)\\
	&=\int_{K/M}e^{(\rho+i\lambda)(A_o(x,kM))}(F_1-F_2)(kM,\lambda)\de\nu^o(kM)=0
	\end{align*}
for a. e. $\lambda\in\ga_+^*$ and for every $s\in W$. Therefore, by Lemma~5.3 in Chap.~II in \cite{gass}, we can conclude that $F_1-F_2=0$ in $L_{o,\emph{\textbf{c}}}^2\left(K/M\times\mathfrak{a}^*\right)$. 

By the Paley-Wiener theorem for the Helgason Fourier transform (Theorem~5.1 in Chap.~III in \cite{gass}), $\cH f\in L_{o,\textbf{c}}^2(K/M\times\mathfrak{a}^*)^\sharp$ for every $f\in\cD(X)$, so that $\mathcal{H}f$ is uniquely determined by its restriction on $K/M\times\ga_+^*$. We denote by $L_{o,\emph{\textbf{c}}}^2\left(K/M\times\mathfrak{a}_+^*\right)$ the space of the functions on $K/M\times\mathfrak{a}_+^*$ that are square-integrable w.r.t. the measure $\left|\textbf{c}(\lambda)\right|^{-2}\de \nu^o\de \lambda$ and the Plancherel theorem for the Helgason-Fourier transform reads:

\begin{theorem}[Theorem 1.5, Chap.~III, \cite{gass}]\label{extH}
	The restricted Helgason-Fourier transform  $f\mapsto \mathcal{H}f|_{K/M\times \ga^*_+}$ extends to a unitary operator $\mathscr{H}$ from $L^2(X)$ onto $L_{o,\emph{\textbf{c}}}^2\left(K/M\times\mathfrak{a}_+^*\right)$.
\end{theorem}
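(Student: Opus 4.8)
The plan is to factor $\mathscr{H}$ as the composition of the full Helgason--Fourier transform with the restriction map to the positive chamber, and to verify that each factor is a surjective isometry. First I would read off from the Plancherel formula \eqref{eq:plancherelformula}, with $f_1=f_2=f\in\cD(X)$, the identity $\|f\|_{L^2(X)}^2=\|\cH f\|^2_{L_{o,\textbf{c}}^2(K/M\times\ga^*)}$, so that $f\mapsto\cH f$ is isometric from $\cD(X)$, with the $L^2(X)$-norm, into $L_{o,\textbf{c}}^2(K/M\times\ga^*)$. By the Paley--Wiener theorem quoted above, its image lies in the \emph{closed} subspace $L_{o,\textbf{c}}^2(K/M\times\ga^*)^\sharp$, and since $\cD(X)$ is dense in $L^2(X)$ this extends uniquely to an isometry $\mathscr{H}_{\mathrm{full}}\colon L^2(X)\to L_{o,\textbf{c}}^2(K/M\times\ga^*)^\sharp$.

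Next I would study the restriction map $\cP\colon F\mapsto F|_{K/M\times\ga_+^*}$ on the $\sharp$-subspace, aiming to show it is a surjective isometry onto $L_{o,\textbf{c}}^2(K/M\times\ga_+^*)$. Its injectivity is exactly the uniqueness statement recorded before the theorem (Lemma~5.3, Chap.~II, \cite{gass}). For the isometry I would decompose $\ga^*=\bigsqcup_{s\in W}s\ga_+^*$, a disjoint union up to the null set of walls by the free transitive action of $W$ on chambers noted after \eqref{WonSigma}, and use that both $\de\lambda$ and the Plancherel density $|\textbf{c}(\lambda)|^{-2}$ are $W$-invariant. Substituting $\lambda=s\mu$ with $\mu\in\ga_+^*$, the full norm of $F$ becomes $w^{-1}\sum_{s\in W}\int_{\ga_+^*}\bigl(\int_{K/M}|F(kM,s\mu)|^2\de\nu^o(kM)\bigr)|\textbf{c}(\mu)|^{-2}\de\mu$, which collapses to the restricted norm \emph{precisely} when the slice $\mu\mapsto\int_{K/M}|F(kM,s\mu)|^2\de\nu^o(kM)$ is independent of $s\in W$.

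Establishing this $W$-invariance of the $L^2(K/M)$-norm slices is the crux and the main obstacle. It is not a consequence of Property $\sharp$ read naively, since that condition is linear in $F$; rather it comes from the functional equation that $\sharp$ encodes. Testing the $W$-invariance of \eqref{invhf} against the Poisson kernels $kM\mapsto e^{(\rho+i\lambda)(A_o(x,kM))}$ for \emph{all} $x\in X$ forces a covariance relation $F(\cdot,s\lambda)=\cA(s,\lambda)F(\cdot,\lambda)$, where $\cA(s,\lambda)$ is the intertwining operator of the unitary principal series at the real parameter $\lambda$; because $|\textbf{c}|^{-2}$ is $W$-invariant, $\cA(s,\lambda)$ is unitary on $L^2(K/M,\nu^o)$, which gives the required $s$-independence of the slices and hence the isometry of $\cP$. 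Running the same covariance in reverse shows $\cP$ is onto: any $G\in L_{o,\textbf{c}}^2(K/M\times\ga_+^*)$ extends to a $\sharp$-function by $F(\cdot,s\mu):=\cA(s,\mu)G(\cdot,\mu)$, with matching norm.

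It remains to see that $\mathscr{H}_{\mathrm{full}}$ is surjective onto the $\sharp$-subspace, equivalently that $\cH(\cD(X))$ is dense there; I would obtain this from the inversion formula for the Helgason--Fourier transform, showing that a $\sharp$-function orthogonal to the image yields, under the inverse transform, an $L^2(X)$-function annihilating all of $\cD(X)$, hence zero. Composing, $\mathscr{H}=\cP\circ\mathscr{H}_{\mathrm{full}}$ is a surjective isometry, i.e.\ unitary, from $L^2(X)$ onto $L_{o,\textbf{c}}^2(K/M\times\ga_+^*)$, and on $\cD(X)$ it coincides with $f\mapsto\cH f|_{K/M\times\ga_+^*}$, as claimed. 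The decisive point throughout is the $W$-covariance of $\cH f$ under the intertwining operators, which is what makes the passage from the symmetric $\ga^*$-picture to the fundamental-chamber picture norm-preserving.
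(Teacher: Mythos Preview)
The paper does not prove this theorem at all: it is quoted verbatim from Helgason \cite{gass} (Theorem~1.5, Chap.~III) and used as a black box. The argument that appears in the paper immediately after the statement is not a proof of Theorem~\ref{extH} but a \emph{consequence} of it, namely the density of $\cH(\cD(X))$ in $L_{o,\textbf{c}}^2(K/M\times\ga^*)^\sharp$, which the paper deduces by invoking Theorem~\ref{extH} at the final step. So there is nothing to compare your argument against in this paper.

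That said, your outline is essentially Helgason's own strategy. The isometry on $\cD(X)$ from the Plancherel identity and the extension by density are unproblematic. The substantive step, as you correctly identify, is the passage from $\ga^*$ to $\ga_+^*$ via the $W$-covariance $F(\,\cdot\,,s\lambda)=\cA(s,\lambda)F(\,\cdot\,,\lambda)$ with unitary intertwiners. Two caveats. First, turning Property~$\sharp$ into this covariance requires knowing that the Poisson kernels $\{e^{(\rho+i\lambda)(A_o(x,\cdot))}:x\in X\}$ are total in $L^2(K/M,\nu^o)$ for a.e.\ $\lambda$, and that the Knapp--Stein operators $\cA(s,\lambda)$ are well defined and unitary on the unitary axis; both are true but nontrivial and live in \cite{gass}, Chap.~II--III. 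Second, your surjectivity argument for $\mathscr{H}_{\mathrm{full}}$ via ``the inversion formula'' is delicate: the pointwise inversion formula and the Plancherel theorem are proved together in Helgason's treatment, so you should be careful not to run in a circle. In Helgason's development the surjectivity onto $L_{o,\textbf{c}}^2(K/M\times\ga_+^*)$ is obtained directly (without first landing in the $\sharp$-space), which sidesteps this issue; in the paper the density in the $\sharp$-space is derived \emph{from} Theorem~\ref{extH}, not the other way around.
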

By the Plancherel formula \eqref{eq:plancherelformula}, $\mathcal{H}$ is an isometry from $\mathcal{D}(X)$ into $L_{o,\emph{\textbf{c}}}^2\left(K/M\times\mathfrak{a}^*\right)$. 
Furthermore, we show that, by Theorem~\ref{extH}, $\mathcal{H}(\cD(X))$ embeds densely in $L_{o,\textbf{c}}^2(K/M\times\mathfrak{a}^*)^\sharp$. Let $F\in L_{o,\emph{\textbf{c}}}^2\left(K/M\times\mathfrak{a}^*\right)^\sharp$ be such that $\langle F,\mathcal{H}f\rangle_{L_{o,\textbf{c}}^2(K/M\times\ga^*)}=0$ for every $f\in\mathcal{D}(X)$. Then, by Fubini theorem we have that
\begin{align}\label{rangeh}\nonumber
	0&=\frac{1}{w}\int_{\ga^*}\int_{K/M}F(kM,\lambda)\overline{\int_Xf(x)e^{(-i\lambda+\rho)(A_o(x,kM))}\de x}\de\nu^o(kM)\frac{\de\lambda}{|\mathbf{c}(\lambda)|^2}\\ \nonumber
	&=\frac{1}{w}\int_{\ga^*}\int_X\int_{K/M}F(kM,\lambda)e^{(i\lambda+\rho)(A_o(x,kM))}\de\nu^o(kM)\overline{f(x)}\de x\frac{\de\lambda}{|\mathbf{c}(\lambda)|^2}\\
		\nonumber &=\int_{\ga_+^*}\int_X\int_{K/M}F(kM,\lambda)e^{(i\lambda+\rho)(A_o(x,kM))}\de\nu^o(kM)\overline{f(x)}\de x\frac{\de\lambda}{|\mathbf{c}(\lambda)|^2}\\
	&=\int_{\ga_+^*}\int_{K/M}F(kM,\lambda)\overline{\mathcal{H}f(kM,\lambda)}\de\nu^o(kM)\frac{\de\lambda}{|\mathbf{c}(\lambda)|^2},
\end{align}
where we use that $F$ satisfies Property $\sharp$ and $|\mathbf{c}|^2$ is $W$-invariant. Hence, \eqref{rangeh} yields 
\[ 
\langle F|_{K/M\times \ga^*_+},\mathcal{H}f|_{K/M\times \ga^*_+}\rangle_{L_{o,\textbf{c}}^2(K/M\times\ga_+^*)}=\langle F|_{K/M\times \ga^*_+},\mathscr{H}f\rangle_{L_{o,\textbf{c}}^2(K/M\times\ga_+^*)}=0,
\]
for every $f\in \mathcal{D}(X)$, and Theorem~\ref{extH} implies that $F\equiv 0$ a.e. on $K/M\times \ga^*_+$. Therefore, $F=0$ in $L_{o,\emph{\textbf{c}}}^2\left(K/M\times\mathfrak{a}^*\right)$ and $\mathcal{H}(\cD(X))$ embeds densely in $L_{o,\textbf{c}}^2(K/M\times\mathfrak{a}^*)^\sharp$.

We have the following equivalent version of Theorem~\ref{extH}, that better suits our needs. 

\begin{theorem}\label{extH2}
	The Helgason-Fourier transform  $\mathcal{H}$ extends to a unitary operator $\mathscr{H}$ from $L^2(X)$ onto $L_{o,\emph{\textbf{c}}}^2\left(K/M\times\mathfrak{a}^*\right)^\sharp$. 
	\end{theorem}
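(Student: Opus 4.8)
The plan is to combine three facts already in place into a standard ``an isometry with dense domain and dense range is unitary'' argument. These are: the Plancherel identity \eqref{eq:plancherelformula}, which makes $\mathcal{H}$ inner-product preserving on $\cD(X)$; the Paley--Wiener theorem (Theorem~5.1 in Chap.~III of \cite{gass}), which guarantees $\mathcal{H}f\in L_{o,\textbf{c}}^2(K/M\times\mathfrak{a}^*)^\sharp$ for every $f\in\cD(X)$; and the orthogonality computation concluding with \eqref{rangeh}, which shows that $\mathcal{H}(\cD(X))$ is dense in $L_{o,\textbf{c}}^2(K/M\times\mathfrak{a}^*)^\sharp$.

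Before assembling these, I would first record that $L_{o,\textbf{c}}^2(K/M\times\mathfrak{a}^*)^\sharp$ is a \emph{closed} subspace of $L_{o,\textbf{c}}^2(K/M\times\mathfrak{a}^*)$, so that it is itself a Hilbert space and the word ``onto'' in the statement is meaningful. This holds because, for each fixed $x\in X$, the assignment to $F$ of the function \eqref{invhf} is a bounded linear map: the kernel $e^{(\rho+i\lambda)(A_o(x,kM))}$ is bounded on $K/M$ (as noted right after Property~$\sharp$), so Cauchy--Schwarz in the $kM$ variable yields an estimate uniform in $\lambda$. Property~$\sharp$ then amounts to requiring, for every $x$, that this bounded-linear image of $F$ equal its own $W$-average, a closed linear condition; the $\sharp$-space is the intersection of these closed conditions and is therefore closed.

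With this in hand, \eqref{eq:plancherelformula} says that $\mathcal{H}\colon\cD(X)\to L_{o,\textbf{c}}^2(K/M\times\mathfrak{a}^*)^\sharp$ is an isometry into a Hilbert space, and since $\cD(X)$ is dense in $L^2(X)$ it extends uniquely to a linear isometry $\mathscr{H}\colon L^2(X)\to L_{o,\textbf{c}}^2(K/M\times\mathfrak{a}^*)^\sharp$ agreeing with $\mathcal{H}$ on $\cD(X)$. To promote $\mathscr{H}$ to a unitary I would argue on its range: being the isometric image of the complete space $L^2(X)$, the range $\mathscr{H}(L^2(X))$ is closed, and it contains $\mathcal{H}(\cD(X))$, which is dense in $L_{o,\textbf{c}}^2(K/M\times\mathfrak{a}^*)^\sharp$ by \eqref{rangeh}. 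A closed subspace containing a dense set equals the whole space, so $\mathscr{H}$ is surjective, and a surjective isometry of Hilbert spaces is unitary.

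I expect the only non-cosmetic point to be the closedness of the $\sharp$-space, since the three analytic pillars are all supplied by the excerpt. An alternative that bypasses even this point is to factor through Theorem~\ref{extH}: comparing \eqref{eq:plancherelformula} with Theorem~\ref{extH} shows that the restriction $F\mapsto F|_{K/M\times\mathfrak{a}_+^*}$ is norm preserving on the dense set $\mathcal{H}(\cD(X))$, and being bounded it extends to an isometry of the $\sharp$-space onto $L_{o,\textbf{c}}^2(K/M\times\mathfrak{a}_+^*)$ (surjectivity again from the density of $\mathcal{H}(\cD(X))$ together with Theorem~\ref{extH}); then $\mathscr{H}$ is the composition of the unitary of Theorem~\ref{extH} with the inverse of this restriction.
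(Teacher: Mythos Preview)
Your proposal is correct and follows essentially the same route the paper takes: the paper assembles exactly the three ingredients you list (Plancherel isometry on $\cD(X)$, Paley--Wiener giving $\cH f\in L_{o,\textbf{c}}^2(K/M\times\mathfrak{a}^*)^\sharp$, and the density computation~\eqref{rangeh}) immediately before stating Theorem~\ref{extH2} as their consequence, without writing out a separate proof. Your explicit verification that the $\sharp$-space is closed (via boundedness of the kernel and $W$-invariance of $|\mathbf{c}|^2$) is a point the paper leaves implicit, and your alternative factoring through Theorem~\ref{extH} is a clean way to sidestep it.
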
	
In what follows, we always consider $\mathscr{H}$ taking values in $L_{o,\emph{\textbf{c}}}^2\left(K/M\times\mathfrak{a}^*\right)^\sharp$.	
\color{black}

\subsection{The Horocyclic Radon Transform}\label{sec:radon}	

In what follows we introduce the horocyclic Radon transform, we study its range, and we investigate its intertwining properties with the quasi-regular representations $\pi$ and $\hat{\pi}$ of $G$. 

Because horocycles admit (several) explicit parametrizations, we define the horocyclic Radon transform appealing directly to the basic parametrization, as clarified in the definition that follows.

\begin{definition}
	The \emph{horocyclic Radon transform}\index{horocyclic Radon!transform} $\mathcal{R}f$ of a function $f\in \mathcal{D}(X)$ is the map $\mathcal{R}f:\Xi\to\mathbb{C}$ defined by 
	\[
	(\mathcal{R}f\circ\Psi_o)(kM,a)=\int_N f(kan[o])\de n,
	\]
	for every $(kM,a)\in K/M\times A$. 
\end{definition}
If we change parametrization, and use equality \eqref{xirefere}, for any $x\in X$ we obtain the equivalent definition
\begin{align}\label{eq:radonwrtx}
\nonumber(\mathcal{R}f\circ\Psi_x)(kM,a)&=(\mathcal{R}f\circ\Psi_o)(kM,a\exp(A_o(x,kM)))\\&=\int_N f(ka\exp(A_o(x,kM))n[o])\de n.
\end{align}



\begin{definition}
	Let $f\in \mathcal{D}(X)$. We denote by $\mathcal{A}f$ the map $\mathcal{A}f:K/M\times A\to\mathbb{C}$ defined by 
	\[
	\mathcal{A}f(kM,a):=
	\Psi_o^*(\mathcal{R}f)(kM,a)=
	(\textcolor{black}{\Delta^{-\frac{1}{2}}}\cdot(\mathcal{R}f\circ\Psi_o))(kM,a).
	\]
\end{definition}
It is worth observing that if the function $f$ is $K$-bi-invariant, then $\mathcal{A}f$ coincides with the \emph{Abel transform}\index{Abel!transform} of $f$ introduced by Helgason in Chap.~III in \cite{gass}.

We need to introduce the Fourier transform on the Abelian group $A$. 
\begin{definition}[\S 4.2, Chap.~4, \cite{folland16}]
Let $s\in L^1(A)$. The \emph{Fourier transform}\index{Fourier!transform} $\mathcal{F}s$ of $s$ is defined on $\mathfrak{a}^*$ by	
\[	\mathcal{F} s(\lambda)=\int_As(a)e^{-i\lambda(\log a)}\de a\;.\]
\end{definition}
We now state a fundamental theorem in the $L^2$ theory of the Fourier transform. 

\begin{theorem}[Theorem 4.26, Chap.~4, \cite{folland16}]\label{fouriertransformabeliangroup}
The Fourier transform $\mathcal{F} :L^1\cap L^2(A)\to C(\mathfrak{a}^*)$ extends uniquely to a unitary operator from $L^2(A)$ onto $L^2(\mathfrak{a}^*)$. In particular, 
\begin{equation*}
\|\mathcal{F} s\|_{L^2(\mathfrak{a}^*)}=\|s\|_{L^2(A)}.
\end{equation*}
\end{theorem}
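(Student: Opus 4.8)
The plan is to transport the problem to Euclidean space and then run the classical Plancherel argument. Since $\exp\colon\mathfrak{a}\to A$ is a group isomorphism onto the abelian group $A\cong\R^\ell$ with $\ell=\dim A$, and since, under the normalizations fixed earlier, the Haar measure $\de a$ corresponds to Lebesgue measure on $\mathfrak{a}$ and $\de\lambda$ to the dual (self-dual) Lebesgue measure on $\mathfrak{a}^*$, the operator $\mathcal{F}$ becomes literally the Euclidean Fourier transform
\[
\mathcal{F}s(\lambda)=\int_{\mathfrak{a}}s(\exp H)\,e^{-i\lambda(H)}\,\de H .
\]
First I would record the soft facts: for $s\in L^1(A)$ the integral converges, $\mathcal{F}s$ is bounded and continuous, and in fact $\mathcal{F}s\in C_0(\mathfrak{a}^*)$ by Riemann--Lebesgue; thus $\mathcal{F}\colon L^1(A)\to C(\mathfrak{a}^*)$ is well defined and bounded.

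The heart of the matter is the Plancherel identity on a dense subspace. I would work with the Schwartz space $\mathcal{S}$ (pulled back from $\mathcal{S}(\mathfrak{a})$ via $\exp$), which is contained in $L^1\cap L^2(A)$ and dense in $L^2(A)$. The essential analytic input is the Fourier inversion formula: for $h\in\mathcal{S}$,
\[
h(\exp H)=\int_{\mathfrak{a}^*}\mathcal{F}h(\lambda)\,e^{i\lambda(H)}\,\de\lambda .
\]
I would prove this by inserting the Gaussian regulariser $e^{-\varepsilon|\lambda|^2}$, recognising the resulting integral as the convolution of $h$ with the heat kernel, and letting $\varepsilon\to0^+$ using that the heat kernels form an approximate identity; the Schwartz decay justifies all the interchanges and the dominated convergence. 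Granting inversion, fix $f\in\mathcal{S}$, set $f^*(a)=\overline{f(a^{-1})}$ and form $g=f*f^*$, where $(f*h)(a)=\int_A f(b)\,h(b^{-1}a)\,\de b$. A direct substitution $a\mapsto a^{-1}$ gives $\mathcal{F}f^*=\overline{\mathcal{F}f}$, hence $\mathcal{F}g=|\mathcal{F}f|^2$, while $g$ is continuous with $g(e)=\int_A|f(a)|^2\,\de a=\|f\|_{L^2(A)}^2$. Evaluating the inversion formula for $g$ at the identity (where $H=0$) yields
\[
\|f\|_{L^2(A)}^2=g(e)=\int_{\mathfrak{a}^*}\mathcal{F}g(\lambda)\,\de\lambda=\int_{\mathfrak{a}^*}|\mathcal{F}f(\lambda)|^2\,\de\lambda=\|\mathcal{F}f\|_{L^2(\mathfrak{a}^*)}^2 ,
\]
which is the desired isometry on $\mathcal{S}$.

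Finally I would extend and conclude. Since $\mathcal{F}$ is isometric on the dense subspace $\mathcal{S}\subset L^2(A)$, it extends uniquely to a linear isometry $L^2(A)\to L^2(\mathfrak{a}^*)$ by the bounded-linear-transformation theorem; this gives existence and uniqueness of the extension together with the norm identity $\|\mathcal{F}s\|_{L^2(\mathfrak{a}^*)}=\|s\|_{L^2(A)}$. For surjectivity, the range of an isometry is closed, and it contains $\mathcal{F}(\mathcal{S})=\mathcal{S}(\mathfrak{a}^*)$, which is dense in $L^2(\mathfrak{a}^*)$; hence the range is all of $L^2(\mathfrak{a}^*)$ and $\mathcal{F}$ is unitary. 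Equivalently, the inverse Fourier transform is an isometry and, by inversion on $\mathcal{S}$, a two-sided inverse of $\mathcal{F}$.

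I expect the only genuine obstacle to be the Fourier inversion formula, and within it the Gaussian-regularisation limit: every other step is either a formal change of variables or a standard density and closed-range argument. Once inversion is available on the Schwartz class, the isometry and surjectivity follow mechanically.
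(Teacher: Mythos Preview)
The paper does not prove this theorem at all: it is simply quoted from Folland's textbook (Theorem~4.26, Chap.~4) and used as a black box. So there is no ``paper's own proof'' to compare against.

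Your argument is correct. By identifying $A$ with $\mathfrak{a}\cong\R^\ell$ via the exponential map you reduce the statement to the classical Euclidean Plancherel theorem, and the normalization of $\de a$ fixed in the paper (the Riemannian measure from the Killing form times $(2\pi)^{-\ell/2}$) is precisely what makes the transform self-dual with respect to Lebesgue measure on $\mathfrak{a}^*$. The Gaussian-regularised inversion formula on $\mathcal{S}$, followed by the convolution trick $g=f*f^*$ to deduce the isometry, and then density plus closed range for surjectivity, is a standard and complete route. It is worth noting that Folland's proof in the cited reference is carried out for general locally compact abelian groups and proceeds via Bochner's theorem and functions of positive type rather than Gaussian regularisation; your Euclidean shortcut is the natural simplification available here because $A$ is a vector group.
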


We denote by $R$ the \emph{regular representation}\index{regular!representation} of $A$ on $L^2(A)$, which is defined for every $s\in L^2(A)$ and for every $\alpha\in A$ by 
\[
R_\alpha s(a)=s(\alpha^{-1}a),\quad a\in A.
\]
Furthermore, we denote by $M$ the representation of $A$ on $L^2(\mathfrak{a}^*)$ defined for every $r\in L^2(\mathfrak{a}^*)$ and for every $\alpha\in A$ by 
\[
M_\alpha r(\lambda)=e^{-i\lambda(\log{\alpha})}r(\lambda),\quad \lambda\in\mathfrak{a}^*.
\]
\begin{proposition}[\S 7.2, Chap. 5, \cite{hol95}]
\label{prop:fouriertransformintertwines}
The Fourier transform $\mathcal{F} :L^2(A)\to L^2(\mathfrak{a}^*)$ intertwines the regular representation $R$ with the representation $M$, i.e.
\[
\mathcal{F}  R_\alpha=M_\alpha \mathcal{F} ,
\]
for every $\alpha\in A$.
\end{proposition}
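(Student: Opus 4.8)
The plan is to verify the intertwining relation by a direct change-of-variables computation on the dense subspace $L^1\cap L^2(A)$, where the defining integral for $\mathcal{F}$ converges absolutely, and then to extend the resulting identity to all of $L^2(A)$ by density and continuity.

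First I would fix $\alpha\in A$ and $s\in L^1\cap L^2(A)$ and expand, for $\lambda\in\mathfrak{a}^*$,
\[
\mathcal{F}(R_\alpha s)(\lambda)=\int_A s(\alpha^{-1}a)e^{-i\lambda(\log a)}\de a.
\]
The substitution $a=\alpha b$ is measure preserving because $A$ is unimodular (indeed Abelian) and $\de a$ is its Haar measure. The crucial structural point is that $A=\exp\mathfrak{a}$ is an Abelian vector group, so $\log\colon A\to\mathfrak{a}$ is a group homomorphism and $\log(\alpha b)=\log\alpha+\log b$. Hence the exponential factorizes and
\[
\mathcal{F}(R_\alpha s)(\lambda)=\int_A s(b)e^{-i\lambda(\log\alpha+\log b)}\de b=e^{-i\lambda(\log\alpha)}\int_A s(b)e^{-i\lambda(\log b)}\de b=e^{-i\lambda(\log\alpha)}\mathcal{F}s(\lambda),
\]
which is precisely $M_\alpha(\mathcal{F}s)(\lambda)$. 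This establishes $\mathcal{F}R_\alpha s=M_\alpha\mathcal{F}s$ for every $s$ in the dense subspace $L^1\cap L^2(A)$.

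To conclude, I would invoke boundedness. By Theorem~\ref{fouriertransformabeliangroup} the operator $\mathcal{F}$ is unitary from $L^2(A)$ onto $L^2(\mathfrak{a}^*)$, while $R_\alpha$ is unitary on $L^2(A)$ (left translation preserves Haar measure) and $M_\alpha$ is unitary on $L^2(\mathfrak{a}^*)$ (multiplication by a function of modulus one). Therefore both $\mathcal{F}R_\alpha$ and $M_\alpha\mathcal{F}$ are bounded operators $L^2(A)\to L^2(\mathfrak{a}^*)$ that agree on the dense subspace $L^1\cap L^2(A)$, and hence they coincide on all of $L^2(A)$. Since $\alpha\in A$ was arbitrary, the intertwining relation $\mathcal{F}R_\alpha=M_\alpha\mathcal{F}$ holds for every $\alpha$.

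The computation itself presents no genuine obstacle; the only points requiring care are the two structural facts that make the algebra work, namely that the Haar measure on $A$ is translation invariant and that $\log$ is additive, both of which rest on $A$ being an Abelian vector group, together with the density-and-continuity extension, which is needed because $\mathcal{F}$ is defined a priori only through an absolutely convergent integral on $L^1\cap L^2(A)$ rather than by a pointwise formula on all of $L^2(A)$.
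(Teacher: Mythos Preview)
Your proof is correct. The paper does not supply its own proof of this proposition; it simply records the statement with a reference to \cite{hol95}, so there is nothing further to compare.
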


We are now ready to recall the result which relates the Helgason-Fourier transform with the horocyclic Radon transform. We refer to Proposition~\ref{fst} as the Fourier Slice Theorem for the horocyclic Radon transform in analogy with the polar Radon transform, see \cite{helgason99} as a classical reference. For the reader's convenience, we include the proof.
\begin{proposition}[\S 5, Chap.~III, \cite{gass}]\label{fst}
	For every $f\in \mathcal{D}(X)$ and $kM\in K/M$, the function $a\mapsto\mathcal{A}f(kM,a)$ is in $L^1(A)$ and 
	\begin{equation}\label{eq:fst}
	(I\otimes\mathcal{F})\mathcal{A}f(kM,\lambda)= \mathcal{H}f(kM,\lambda),
	\end{equation}
	for almost every $\lambda\in\mathfrak{a}^*$.
\end{proposition}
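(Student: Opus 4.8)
The plan is to reduce both sides of~\eqref{eq:fst} to the common iterated integral
\[
\int_A\int_N f(kan[o])\,e^{(-i\lambda+\rho)(\log a)}\,\de n\,\de a.
\]
For the left-hand side this is immediate: unravelling the definitions gives $\mathcal{A}f(kM,a)=\Delta^{-\frac12}(kM,a)\,(\mathcal{R}f\circ\Psi_o)(kM,a)=e^{\rho(\log a)}\int_N f(kan[o])\,\de n$, so that for $\lambda\in\mathfrak{a}^*$
\[
(I\otimes\mathcal{F})\mathcal{A}f(kM,\lambda)=\int_A e^{-i\lambda(\log a)}e^{\rho(\log a)}\Big(\int_N f(kan[o])\,\de n\Big)\de a=\int_A\int_N f(kan[o])\,e^{(-i\lambda+\rho)(\log a)}\,\de n\,\de a.
\]
For this to be legitimate I first verify the $L^1$ claim. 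Since $f\in\mathcal{D}(X)$ has compact support $C$ and the map $x\mapsto A_o(x,kM)$ is continuous, whenever $kaN[o]\cap C\neq\emptyset$ one has $\log a\in A_o(C,kM)$, a bounded subset of $\mathfrak{a}$; hence $a\mapsto\int_N f(kan[o])\,\de n$ is continuous with compact support in $A$, and multiplication by the continuous factor $e^{\rho(\log a)}$ keeps it in $L^1(A)$.

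The heart of the argument is the integration formula
\[
\int_X h(x)\,\de x=\int_A\int_N h(kan[o])\,\de n\,\de a,\qquad h\in C_c(X),
\]
valid for each fixed $k\in K$. To establish it I start from the pushforward description of the invariant measure given by Proposition~\ref{haars}, namely $\int_X h\,\de x=\int_{N\times A}h(na[o])\,e^{-2\rho(\log a)}\,\de n\,\de a$, and use the $G$-invariance of $\de x$ to replace $h$ by $y\mapsto h(k[y])$, obtaining $\int_X h\,\de x=\int_{N\times A}h(kna[o])\,e^{-2\rho(\log a)}\,\de n\,\de a$. I then write $kna[o]=ka\,(a^{-1}na)[o]$ and substitute $n'=a^{-1}na$, i.e. $n=an'a^{-1}$; since conjugation by $a$ scales the Haar measure of $N$ by $\det(\Ad a|_{\gn})=e^{2\rho(\log a)}$ (the determinant computed just before~\eqref{modAN}), one has $\de n=e^{2\rho(\log a)}\,\de n'$. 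This factor cancels the modular weight $e^{-2\rho(\log a)}$ and delivers the formula. This cancellation, which is exactly where $\rho$ enters, is the one step to be carried out with care about the direction of the change of variables, and is the main obstacle.

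It remains to identify the exponent. Since $k\in K$ we have $\kappa_o(k)=k$, so by~\eqref{Psix} the horocycle $\Psi_o(kM,a)=kaN[o]$ contains every point $kan[o]$; by the characterization~\eqref{charcxi} this gives $A_o(kan[o],kM)=\log a$. Applying the integration formula to the continuous, compactly supported function $h(x)=f(x)\,e^{(-i\lambda+\rho)(A_o(x,kM))}$ and inserting this identity,
\[
\mathcal{H}f(kM,\lambda)=\int_X f(x)\,e^{(-i\lambda+\rho)(A_o(x,kM))}\,\de x=\int_A\int_N f(kan[o])\,e^{(-i\lambda+\rho)(\log a)}\,\de n\,\de a,
\]
which is precisely the expression obtained above for $(I\otimes\mathcal{F})\mathcal{A}f(kM,\lambda)$. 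This proves~\eqref{eq:fst}; granting the measure identity, the remaining bookkeeping of the $e^{\rho}$ weight is routine.
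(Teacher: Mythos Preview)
Your proof is correct and follows essentially the same route as the paper's: both reduce each side of~\eqref{eq:fst} to $\int_A\int_N f(kan[o])\,e^{(-i\lambda+\rho)(\log a)}\de n\,\de a$, using the key identity $A_o(kan[o],kM)=\log a$. The only cosmetic difference is that the paper lifts the computation to $G$ by inserting a dummy $K$-integration and invoking the $ANK$ form of the Haar measure from Proposition~\ref{haars}, whereas you stay on $X$ and re-derive the needed integration formula $\int_X h\,\de x=\int_A\int_N h(kan[o])\,\de n\,\de a$ from the $NAK$ form via the conjugation change of variables (which is exactly how the third line of Proposition~\ref{haars} is obtained from the second). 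Your compact-support argument for the $L^1$ claim is slightly more direct than the paper's explicit bound $\int_X e^{\rho(A_o(x,kM))}|f(x)|\,\de x$, but both are straightforward.
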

\begin{proof}
	If $f\in \mathcal{D}(X)$
	and $kM\in K/M$, then by Proposition~\ref{haars} and \eqref{Abello}
	\begin{align*}
	\int_A |\mathcal{A}f(kM,a)|\de a=&\int_A e^{\rho(\log a)}|\mathcal{R}f\circ\Psi_o(kM,a)|\de a\\
	\leq&\int_A\int_N e^{\rho(\log a)}|f(kan[o])|\de n\de a\\
	=&\int_A\int_N\int_K e^{\rho(\log a)}|f(kank_1[o])|\de k_1\de n\de a\\
	=&\int_G e^{\rho(A_o(g))}|f(kg[o])|\de g\\
	=&\int_G e^{\rho(A_o(k^{-1}g))}|f(g[o])|\de g\\
	=&\int_{\text{supp}(f)} e^{\textcolor{black}{\rho(A_o(x,kM))}}|f(x)|\de x<+\infty.
	\end{align*}
	Thus, $\mathcal{A}f(kM,\cdot)$ is in $L^1(A)$ and by similar steps it is easy to prove that 
	\begin{equation*}
	(I\otimes\mathcal{F})\mathcal{A}f(kM,\lambda)=\mathcal{H}f(kM,\lambda),
	\end{equation*}
	for almost every $\lambda\in\mathfrak{a}^*$.
	
\end{proof}

Let $f\in\mathcal{D}(X)$. By the Paley-Wiener theorem for the Helgason Fourier transform (Theorem~5.1 in Chap.~III in \cite{gass}), $\cH f$ is rapidly decreasing in the variable $\lambda\in\mathfrak{a}^*$ uniformly over $K/M$, that is for every $n\in\mathbb{N} $
\begin{equation*}
\vertiii{\mathcal{H}f}_{n}:=\sup_{kM\in K/M,\:\lambda\in\mathfrak{a}^*}(1+|\lambda|)^n|\mathcal{H}f(kM,\lambda)|<+\infty.
\end{equation*}
By Theorem~\ref{fouriertransformabeliangroup} and Proposition~\ref{fst}, we have that 
\begin{align*}
\int_{\Xi}|\mathcal{R}f(\xi)|^2\de\xi&=\int_{K/M\times A}|\Psi_o^*(\mathcal{R}f)(kM,a)|^2\de\nu^o(kM)\de a\\
&=\int_{K/M\times\mathfrak{a}^*}|(I\otimes\mathcal{F})(\Psi_o^*(\mathcal{R}f))(kM,\lambda)|^2\de\nu^o(kM)\de \lambda\\
&=\int_{K/M\times\mathfrak{a}^*}|\mathcal{H}f(kM,\lambda)|^2\de\nu^o(kM)\de\lambda\\
&=\int_{K/M\times\mathfrak{a}^*}\frac{(1+|\lambda|)^{2n}|\mathcal{H}f(kM,\lambda)|^2}{(1+|\lambda|)^{2n}}\de\nu^o(kM)\de\lambda\\
&\leq\vertiii{\mathcal{H} f}_{n}^2\int_{\mathfrak{a}^*}\frac{1}{(1+|\lambda|)^{2n}}\de\lambda<+\infty,
\end{align*}
for every $n>\dim A/2$. Therefore, $\mathcal{R}f\in L^2(\Xi)$ for every $f\in \mathcal{D}(X)$.
\color{black}

The horocyclic Radon transform intertwines the regular representations $\pi$ and $\hat{\pi}$ of $G$.
\begin{proposition}\label{interradon}
	For every $g\in G$ and $f\in \mathcal{D}(X)$
	\[\mathcal{R}(\pi(g)f)=\hat{\pi}(g)(\mathcal{R}f)\;.\]
\end{proposition}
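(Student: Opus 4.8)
The plan is to unfold the definition of $\hat\pi$ and reduce the claim to the pointwise identity $\mathcal{R}(\pi(g)f)(\xi)=\mathcal{R}f(g^{-1}.\xi)$ for every horocycle $\xi$, then verify this after parametrising $\xi=\Psi_o(kM,a)$. Since $\kappa_o(k)=k$ for $k\in K$, formula \eqref{Psix} gives $\Psi_o(kM,a)=kaN[o]$, so first I would compute the left-hand side straight from the definition of $\mathcal{R}$, namely $\mathcal{R}(\pi(g)f)(\Psi_o(kM,a))=\int_N f(g^{-1}kan[o])\,\de n$. The idea is to massage $g^{-1}ka$ into the shape $k'\,a'\,n'$ with $k'\in K$, $a'\in A$, $n'\in N$, so that after absorbing $n'$ the integral again becomes a Radon transform parametrised with respect to $o$.

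Concretely, I would write the Iwasawa $KAN$-decomposition $g^{-1}k=k'\exp(H_o(g^{-1}k))n_1$, with $k'=\kappa_o(g^{-1}k)$ and $n_1\in N$. Because $A$ normalises $N$ and $A$ is abelian, $g^{-1}ka=k'\,a\exp(H_o(g^{-1}k))\,\tilde n$ with $\tilde n=a^{-1}n_1 a\in N$; absorbing $\tilde n$ by left-invariance of $\de n$ yields
\[
\mathcal{R}(\pi(g)f)(\Psi_o(kM,a))=\int_N f\bigl(k'\,a\exp(H_o(g^{-1}k))\,n[o]\bigr)\,\de n=\mathcal{R}f\bigl(\Psi_o(k'M,a\exp(H_o(g^{-1}k)))\bigr).
\]
For the right-hand side I would invoke \eqref{eq:actiongxiparametrized} to get $g^{-1}.\Psi_o(kM,a)=\Psi_{g^{-1}[o]}(g^{-1}\langle kM\rangle,a)$, observe that $g^{-1}\langle kM\rangle=\kappa_o(g^{-1}k)M=k'M$ by \eqref{GonB}, and then move the reference point back to $o$ via \eqref{xirefere}, obtaining $\mathcal{R}f(g^{-1}.\xi)=\mathcal{R}f\bigl(\Psi_o(k'M,a\exp(A_o(g^{-1}[o],k'M)))\bigr)$. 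Comparing the two expressions, the whole proposition collapses to the single identity
\[
H_o(g^{-1}k)=A_o\bigl(g^{-1}[o],\,\kappa_o(g^{-1}k)M\bigr).
\]

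The hard part will be exactly this last identity, since it is where the change of reference point meets the Iwasawa bookkeeping and where sign and ordering conventions are easiest to get wrong. I would prove it by applying item~(i) of Lemma~\ref{propA} at $x=o$ (using $\kappa_o|_K=\mathrm{id}$), which gives $A_o(g^{-1}[o],k'M)=A_o((k')^{-1}g^{-1})=A_o((gk')^{-1})$, followed by \eqref{eq:fondAH} to rewrite $A_o((gk')^{-1})=-H_o(gk')$. Finally, inverting the decomposition of $g^{-1}k$ and commuting the resulting $A$- and $N$-factors shows $gk'=k\exp(-H_o(g^{-1}k))n_2$ for some $n_2\in N$, whence $H_o(gk')=-H_o(g^{-1}k)$ by uniqueness of the Iwasawa decomposition; this delivers $A_o(g^{-1}[o],k'M)=H_o(g^{-1}k)$ and closes the argument. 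A cleaner, coordinate-free alternative would be to read $\mathcal{R}f(\xi)$ as $\int_\xi f\,\de m_\xi$ for the measure $m_\xi$ obtained by pushing the Haar measure of $N$ forward along $n\mapsto kan[o]$, and to deduce the statement from the equivariance $g_*m_\xi=m_{g.\xi}$ together with the substitution $x=g[y]$; this trades the explicit identity above for a check that the family $\{m_\xi\}$ is $G$-equivariant (and well defined independently of the representative $k$ of $kM$, using that $M$ normalises $N$), which is essentially the same computation in disguise.
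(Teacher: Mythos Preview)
Your proof is correct and follows essentially the same route as the paper: both start from $\int_N f(g^{-1}kan[o])\,\de n$, use the $KAN$-decomposition of $g^{-1}k$ together with the normalisation of $N$ by $A$, and reduce everything to the identity $H_o(g^{-1}k)=A_o(g^{-1}[o],\kappa_o(g^{-1}k)M)$. The only cosmetic difference is in how that identity is verified: the paper rewrites $H_o(g^{-1}k)=-A_o(g[o],kM)=A_{g[o]}(o,kM)$ via \eqref{cocycl} and then applies \eqref{ginvA}, while you instead prove $H_o(gk')=-H_o(g^{-1}k)$ by inverting the Iwasawa decomposition directly; both arguments are equivalent and equally short.
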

\begin{proof}
	Let $g\in G$ and $f\in \mathcal{D}(X)$. It is sufficient to show that $\mathcal{R}(\pi(g)f)\circ\Psi_{o}=\hat{\pi}(g)(\mathcal{R}f)\circ\Psi_{o}$ on $K/M\times A$. Let $(kM,a)\in K/M\times A$. Then
	\begin{align*}
	\mathcal{R}(\pi(g)f)\circ\Psi_o(kM,a)&=\int_N \pi(g)f(kan[o])\de n\\
	&=\int_N f(g^{-1}kan[o])\de n\\
	&=\int_N f(\kappa_o(g^{-1}k)\exp(H_o(g^{-1}k))an[o])\de n,
	\end{align*}
	where we used the decomposition $g^{-1}k\in \kappa_o(g^{-1}k)\exp(H_o(g^{-1}k))N$  and the fact that $A$ normalizes $N$. Now, by \eqref{eq:fondAH}, \eqref{Abello} and \eqref{cocycl}, we have 
	\[H_o(g^{-1}k)=-A_o(k^{-1}g)=-A_o(g[o],kM)=A_{g[o]}(o,kM)\;.\]
	Finally, by $g^{-1}(kM)=\kappa_o(g^{-1}k)M$ and \eqref{eq:radonwrtx} we have that
	\begin{align*}
	\mathcal{R}(\pi(g)f)\circ\Psi_o(kM,a)&=\int_N f(\kappa_o(g^{-1}k)\exp(A_{g[o]}(o,kM))an[o])\de n\\
	&=\int_N f(\kappa_o(g^{-1}k)\exp(A_{o}(g^{-1}[o],g^{-1}\langle kM\rangle))an[o])\de n\\
	&=\mathcal{R}f\circ\Psi_{g^{-1}[o]}(g^{-1}\langle kM\rangle,a)\\
	&=(\hat{\pi}(g)\mathcal{R}f)\circ\Psi_{o}(kM,a),
	\end{align*}
	where we used the action of $G$ on $\Xi$ given in \eqref{eq:actiongxiparametrizedfunction}.
\end{proof}

We now introduce a closed subspace of $L^2(\Xi)$ which will play a crucial role because it is the range of the unitarization 
of the horocyclic Radon transform. By definition, for every $x\in X$ and every $F\in L^2(\Xi)$
\begin{align*}\label{eq:firstl2zfunction}
\|F\|^2_{L^2(\Xi)}&=\int_{K/M}\int_A |\Psi_x^*F(kM,a)|^2\de a{\rm d}\nu^x(kM)<+\infty.
\end{align*}
So that, the function $\Psi_x^*F(kM,\cdot)$ is in $L^2(A)$ for almost every $kM\in K/M$. Then, by Plancherel formula and Fubini theorem 
\begin{align*}
\|F\|^2_{L^2(\Xi)}&=\int_{K/M\times A}|\Psi_x^*F(kM,a)|^2{\rm d}\nu^x(kM)\de a\\
&=\int_{K/M\times \mathfrak{a}^*}|(I\otimes\mathcal{F})\Psi_x^*F(kM,\lambda)|^2{\rm d}\nu^x(kM)\de \lambda\\
&=\int_{\mathfrak{a}^*}\int_{K/M} |(I\otimes\mathcal{F})\Psi_x^*F(kM,\lambda)|^2{\rm d}\nu^x(kM)\de \lambda<+\infty.
\end{align*}
So that, for almost every $\lambda\in\mathfrak{a}^*$ the function $(I\otimes\mathcal{F})\Psi_x^*F(\cdot,\lambda)$ is in $L^2(K/M,\nu^x)\subseteq L^1(K/M,\nu^x)
$
and
\begin{align*}
&|\int_{K/M}(I\otimes\mathcal{F})\Psi_x^*F(kM,\lambda)\de\nu^x(kM)|\\
&\leq\int_{K/M}|(I\otimes\mathcal{F})\Psi_x^*F(kM,\lambda)|\de\nu^x(kM)<+\infty.
\end{align*}
\vskip0.2truecm
{\bf Property $\flat$.} We say that a function $F\in L^2(\Xi)$ satisfies Property $\flat$ if for every $x\in X$ the function 
\begin{equation*}
\ga^*\ni\lambda\longmapsto\int_{K/M}(I\otimes\mathcal{F})\Psi_x^*F(kM,\lambda)\de\nu^x(kM)
\end{equation*}
is $W$-invariant almost everywhere. 
\vskip0.2truecm

We denote by $L^2_{\flat}(\Xi)$ the space of functions $F\in L^2(\Xi)$ satisfying Property $\flat$. 
Notice that by the considerations above, the integral appearing in Property $\flat$ is finite for almost every $\lambda\in\mathfrak{a}^*$.
Our main results in Sect.~\ref{sec:unit} are based on the characterization of $L_\flat^2(\Xi)$ given in Proposition~\ref{prop:fundamentaloperator} below. We denote 
by $L_o^2(K/M\times\mathfrak{a}^*)$ the space of square-integrable functions on $K/M\times\mathfrak{a}^*$ w.r.t. the measure $\nu^o\otimes{\rm d} \lambda$.

\begin{proposition}\label{prop:fundamentaloperator}
	The operator $\Phi_o$ defined on $F\in L^2(\Xi)$ by 
	\begin{equation*}
	\Phi_oF(kM,\lambda)=(I\otimes\mathcal{F})\Psi_o^*F(kM,\lambda)
	,\qquad \text{a.e.}\, (kM,\lambda)\in K/M\times\mathfrak{a}^*,
	\end{equation*}
	is an isometry from $L^2(\Xi)$ into $L_o^2\left(K/M\times\mathfrak{a}^*\right)$. Furthermore, a function $F$ belongs to $L_\flat^2(\Xi)$
	if and only if $\Phi_o F$ satisfies Property $\sharp$.
\end{proposition}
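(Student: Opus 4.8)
The plan is to treat the isometry claim and the characterization separately, the second reducing to a single change-of-variables identity.

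\emph{Isometry.} I would write $\Phi_o=(I\otimes\mathcal{F})\circ\Psi_o^*$, where $\Psi_o^*\colon L^2(\Xi)\to L_o^2(K/M\times A)$ is the unitary operator constructed above and $I\otimes\mathcal{F}$ applies the Fourier transform of Theorem~\ref{fouriertransformabeliangroup} in the $A$-variable. Since $\mathcal{F}$ is unitary on $L^2(A)$, Tonelli's theorem gives, for $F\in L^2(\Xi)$,
\[
\int_{K/M\times\ga^*}|\Phi_oF(kM,\lambda)|^2\de\nu^o(kM)\de\lambda=\int_{K/M}\|\Psi_o^*F(kM,\cdot)\|_{L^2(A)}^2\de\nu^o(kM)=\|F\|_{L^2(\Xi)}^2,
\]
so that $\Phi_o$ is an isometry into $L_o^2(K/M\times\ga^*)$.

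\emph{Characterization.} The heart of the matter is to express the integrand of Property~$\flat$, written with an arbitrary reference point $x$, through $\Phi_oF$. From~\eqref{xirefere} one has $\Psi_x(kM,a)=\Psi_o(kM,a\exp(A_o(x,kM)))$, and unwinding the definition of $\Psi_x^*$ gives the pointwise identity
\[
\Psi_x^*F(kM,a)=e^{-\rho(A_o(x,kM))}\,\Psi_o^*F(kM,a\exp(A_o(x,kM))).
\]
In the $A$-variable this is a scalar multiple of the translate $R_{c^{-1}}[\Psi_o^*F(kM,\cdot)]$ by $c=\exp(A_o(x,kM))$. Applying the intertwining relation $\mathcal{F}R_\alpha=M_\alpha\mathcal{F}$ of Proposition~\ref{prop:fouriertransformintertwines} then yields
\[
(I\otimes\mathcal{F})\Psi_x^*F(kM,\lambda)=e^{(i\lambda-\rho)(A_o(x,kM))}\,\Phi_oF(kM,\lambda).
\]

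Finally, I would integrate over $K/M$ against $\nu^x$ and pass to $\nu^o$ via the Radon--Nikodym derivative $\de\nu^x(kM)=e^{2\rho(A_o(x,kM))}\de\nu^o(kM)$ from~\eqref{eq:radonik}; the exponents combine as $(i\lambda-\rho)+2\rho=i\lambda+\rho$, so that
\[
\int_{K/M}(I\otimes\mathcal{F})\Psi_x^*F(kM,\lambda)\,\de\nu^x(kM)=\int_{K/M}e^{(i\lambda+\rho)(A_o(x,kM))}\,\Phi_oF(kM,\lambda)\,\de\nu^o(kM).
\]
The left-hand side is exactly the function whose $W$-invariance defines Property~$\flat$ for $F$, and the right-hand side is exactly the function whose $W$-invariance defines Property~$\sharp$ for $\Phi_oF$. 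As these agree for every $x\in X$, one is $W$-invariant if and only if the other is, establishing that $F\in L_\flat^2(\Xi)$ precisely when $\Phi_oF$ satisfies Property~$\sharp$. The only delicate point is the sign bookkeeping in the two displayed identities: one must confirm that the translation is by $c^{-1}$ and that Proposition~\ref{prop:fouriertransformintertwines} contributes the factor $e^{i\lambda(A_o(x,kM))}$ with the correct sign, after which the $\rho$-exponents cancel and combine as indicated. The integrability needed for Tonelli and for the pointwise $K/M$-integrals was already secured in the discussion preceding the statement, so no further convergence issues arise.
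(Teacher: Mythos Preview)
Your argument is correct and follows essentially the same route as the paper's proof: both establish the isometry via Parseval on the $A$-factor together with the unitarity of $\Psi_o^*$, and both derive the key identity $(I\otimes\mathcal{F})\Psi_x^*F(kM,\lambda)=e^{(i\lambda-\rho)(A_o(x,kM))}\Phi_oF(kM,\lambda)$ from~\eqref{xirefere} and Proposition~\ref{prop:fouriertransformintertwines}, then integrate against $\nu^x$ and convert to $\nu^o$ via~\eqref{eq:radonik} to match Property~$\flat$ with Property~$\sharp$. Your sign bookkeeping is correct; the paper simply records the same identity in the reciprocal form $\Phi_oF=e^{(\rho-i\lambda)(A_o(x,kM))}(I\otimes\mathcal{F})\Psi_x^*F$.
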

\begin{proof} By Parseval identity, for every $F\in L^2(\Xi)$ we have that 
	\begin{align*}
	&\int_{ K/M\times\mathfrak{a}^*}\left|\Phi_oF(kM,\lambda)\right|^2\de \nu^{o}(kM)\de \lambda\\&=\int_{K/M}\int_{\mathfrak{a}^*}\left|(I\otimes\mathcal{F})\Psi_o^*F(kM,\lambda)\right|^2\de \lambda\de \nu^{o}(kM)\\
	&=\int_{ K/M\times A}\left|\Psi_o^*F(kM,a)\right|^2\de \nu^o(kM)\de a=\|F\|_{ L^2(\Xi)}^2,
	\end{align*}
	so that $\Phi_o$ is an isometry from $L^2(\Xi)$ into $L_o^2\left( K/M\times\mathfrak{a}^*\right)$. Now, let $F\in L^2(\Xi)$. 
	By equation \eqref{xirefere} and by the definition of the regular representation $R$ of $A$, for almost every $kM\in K/M$ and $\lambda\in\mathfrak{a}^*$ we have that
	\begin{align*}
	\Phi_oF(kM,\lambda)&=(I\otimes\mathcal{F})\Psi_o^*F(kM,\lambda)=(I\otimes\mathcal{F})(\Delta^{-\frac{1}{2}}\cdot(F\circ\Psi_o))(kM,\lambda)\\
	&=e^{\rho(A_o(x,kM))}(I\otimes\mathcal{F})(I\otimes R_{\exp(A_x(o,kM))^{-1}})(\Delta^{-\frac{1}{2}}\cdot(F\circ\Psi_x))(kM,\lambda).
	\end{align*}
	Therefore, by Proposition~\ref{prop:fouriertransformintertwines} we obtain
		\begin{align}\label{eq:foundamentalrelation}
	\nonumber\Phi_oF(kM,\lambda)&=e^{\rho(A_o(x,kM))}(I\otimes M_{\exp(A_x(o,kM))^{-1}})(I\otimes\mathcal{F})(\Delta^{-\frac{1}{2}}\cdot(F\circ\Psi_x))(kM,\lambda)\\
	\nonumber&=e^{(\rho-i\lambda)(A_o(x,kM))}(I\otimes\mathcal{F})(\Delta^{-\frac{1}{2}}\cdot(F\circ\Psi_x))(kM,\lambda)\\
	&=e^{(\rho-i\lambda)(A_o(x,kM))}(I\otimes\mathcal{F})\Psi_x^*F(kM,\lambda).
	\end{align}
	Now, for every $x\in X$ and for almost every $\lambda\in\mathfrak{a}^*$, \eqref{eq:foundamentalrelation} yields
	\begin{align}\label{eq:rangeequalities}
	\nonumber&\int_{ K/M}e^{(\rho+i\lambda)(A_o(x,kM))}\Phi_oF(kM,\lambda)\de\nu^{o}(kM)\\
	\nonumber&=\int_{ K/M}e^{(\rho+i\lambda)(A_o(x,kM))}e^{(\rho-i\lambda)(A_o(x,kM))}(I\otimes\mathcal{F})\Psi_x^*F(kM,\lambda)\de\nu^{o}(kM)\\
	\nonumber&=\int_{ K/M}(I\otimes\mathcal{F})\Psi_x^*F(kM,\lambda)e^{2\rho(A_o(x,kM))}\de\nu^{o}(kM)\\
	&=\int_{ K/M}(I\otimes\mathcal{F})\Psi_x^*F(kM,\lambda)\de\nu^{x}(kM).
	\end{align}
	Equality~\eqref{eq:rangeequalities} allows us to conclude that $F$ satisfies Property $\flat$ if and only if $\Phi_oF$ satisfies~Property $\sharp$ and this concludes our proof.
	
\end{proof}	
\begin{corollary}\label{cor:radonbemolle}
	For every $f\in \mathcal{D}(X)$, 
	\begin{equation*}
	\Phi_o(\mathcal{R}f)=\mathcal{H} f
	\end{equation*}
	in $L_o^2( K/M\times\mathfrak{a}^*)$ and $\mathcal{R}f\in L^2_\flat(\Xi)$.
\end{corollary}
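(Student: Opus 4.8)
The plan is to reduce the statement entirely to the Fourier Slice Theorem (Proposition~\ref{fst}) and to the characterization of $L^2_\flat(\Xi)$ recorded in Proposition~\ref{prop:fundamentaloperator}; there is no genuine analytic obstacle here, the content being a careful unwinding of the definitions together with one already-quoted deep input (the Paley--Wiener theorem).

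First I would make sense of $\Phi_o(\mathcal{R}f)$ and compute it. Since $\mathcal{R}f\in L^2(\Xi)$ for every $f\in\mathcal{D}(X)$, as established above, the isometry $\Phi_o$ may be applied to $\mathcal{R}f$. By the very definition of $\mathcal{A}f$ one has $\Psi_o^*(\mathcal{R}f)=\mathcal{A}f$, whence
\[
\Phi_o(\mathcal{R}f)(kM,\lambda)=(I\otimes\mathcal{F})\Psi_o^*(\mathcal{R}f)(kM,\lambda)=(I\otimes\mathcal{F})\mathcal{A}f(kM,\lambda)
\]
for almost every $(kM,\lambda)\in K/M\times\mathfrak{a}^*$. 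The Fourier Slice Theorem (Proposition~\ref{fst}) identifies the right-hand side with $\mathcal{H}f(kM,\lambda)$, yielding the first assertion $\Phi_o(\mathcal{R}f)=\mathcal{H}f$ in $L_o^2(K/M\times\mathfrak{a}^*)$.

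It then remains to show $\mathcal{R}f\in L^2_\flat(\Xi)$. By Proposition~\ref{prop:fundamentaloperator} this holds if and only if $\Phi_o(\mathcal{R}f)$ satisfies Property $\sharp$, and since $\Phi_o(\mathcal{R}f)=\mathcal{H}f$ by the previous step, it suffices to verify that $\mathcal{H}f$ satisfies Property $\sharp$. But this is precisely the content of the Paley--Wiener theorem for the Helgason--Fourier transform (Theorem~5.1 in Chap.~III in \cite{gass}), already invoked above in the form $\mathcal{H}f\in L_{o,\textbf{c}}^2(K/M\times\mathfrak{a}^*)^\sharp$ for every $f\in\mathcal{D}(X)$. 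This closes the argument.

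The only point deserving a word of care, and the closest thing to an obstacle, is that Property $\sharp$ was introduced for elements of $L_{o,\textbf{c}}^2(K/M\times\mathfrak{a}^*)$, whereas $\Phi_o(\mathcal{R}f)$ a priori lives in $L_o^2(K/M\times\mathfrak{a}^*)$. This causes no difficulty, however, because Property $\sharp$ is merely the requirement that the $K/M$-integral $\lambda\mapsto\int_{K/M}e^{(\rho+i\lambda)(A_o(x,kM))}\,\cdot\,\de\nu^o(kM)$ be $W$-invariant, a condition meaningful for any function for which the integral converges; the identification $\Phi_o(\mathcal{R}f)=\mathcal{H}f$ places the function in both spaces simultaneously, so the two notions of Property $\sharp$ agree on it, and the corollary follows at once.
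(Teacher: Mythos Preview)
Your proof is correct and follows exactly the approach indicated in the paper: it unwinds the definition of $\Phi_o$ to reduce the first claim to the Fourier Slice Theorem (Proposition~\ref{fst}), and then invokes Proposition~\ref{prop:fundamentaloperator} together with the fact that $\mathcal{H}f$ satisfies Property~$\sharp$ (via the Paley--Wiener theorem) to obtain $\mathcal{R}f\in L^2_\flat(\Xi)$. Your final paragraph addressing the ambient-space issue for Property~$\sharp$ is a reasonable clarification but not strictly necessary, since the paper treats Property~$\sharp$ as a pointwise condition on the integral and $\mathcal{H}f$ lies in both spaces anyway.
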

\begin{proof}
	The proof follows immediately by Proposition~\ref{fst} and the fact that the Helgason-Fourier transform satisfies Property $\sharp$.
\end{proof}

Some comments are in order. Proposition~\ref{prop:fundamentaloperator} with Corollary~\ref{cor:radonbemolle} shows the link between the range of the Radon transform with the range of the Helgason-Fourier transform, which will play a crucial role in our main result.  
The range $\mathcal{R}(\mathcal{D}(X))$ has already been completely characterized in Chap.~IV in \cite{gass}. As it will be made clear in the next section, Property $\flat$ better suits our needs.

\section{Unitarization and Intertwining}\label{sec:unit}
In order to obtain the unitarization for the horocyclic Radon transform that we are after, we need some technicalities. Figure~\ref{fig:diagram} below might help the reader to keep track of all the spaces and operators involved in our construction.
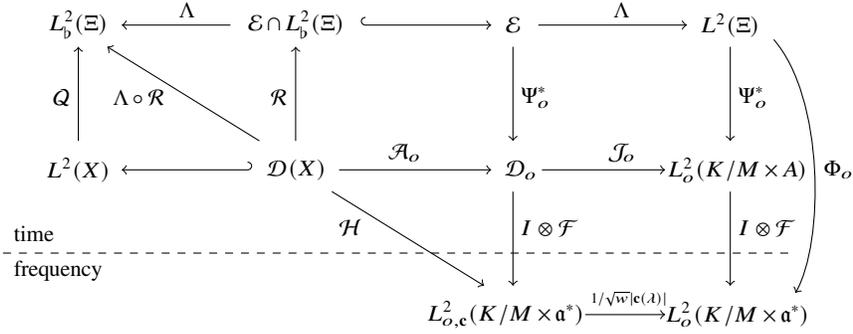
\begin{figure}[h]
	\begin{center}
		\begin{tikzpicture}[scale=0.963]
		\node at (0,0) {$L^2(\Xi)$};
		\node at (-3,0) {$\mathcal{E}$};
		\draw[<-] (-0.6,0)--(-2.65,0);
		\draw[<-] (-0.9,-2)--(-2.6,-2);
		\draw[<-] (-0.9,-4)--(-2,-4);
		\draw[->] (-3,-0.3)--(-3,-1.6);
		\node at (-2.9,-2) {$\mathcal{D}_o$};
		\draw[->] (0,-2.3)--(0,-3.6);
		\draw[->] (0,-0.3)--(0,-1.6);
		\draw[->] (-3,-2.3)--(-3,-3.6);
		\node at (-3.1,-4) {$L_{o,\mathbf{c}}^2(K/M\times\mathfrak{a}^*)$};
		\node at (-3,-1) [anchor=west]{$\Psi_o^*$};
		\node at (-3,-2.8) [anchor=west]{$I\otimes\mathcal{F}$};
		\node at (0.1,-2) {$L_{o}^2(K/M\times A)$};
		\node at (1.5,-2) {$\Phi_o$};
		\node at (0.1,-4) {$L_{o}^2(K/M\times\mathfrak{a}^*)$};
		\node at (-1.5,0) [anchor=south]{$\Lambda$};
		\node at (-1.5,-2) [anchor=south]{$\mathcal{J}_o$};
		\node at (-1.4,-4) [anchor=south]{\tiny{$1/\sqrt{w}{|\mathbf{c}(\lambda)|}$}};
		\node at (0,-2.8) [anchor=west]{$I\otimes\mathcal{F}$};
		\node at (0,-1) [anchor=west]{$\Psi_o^*$};
		\node at (-6,-1) [anchor=east]{$\mathcal{R}$};
		\node at (-9,-1) [anchor=east]{$\mathcal{Q}$};
		\node at (-9,-2) {$L^2(X)$};
		\draw[<-] (-9,-0.3)--(-9,-1.6);
		\node at (-6,-2) {$\mathcal{D}(X)$};
		\draw[<-] (-6,-0.3)--(-6,-1.6);
		\draw[left hook->] (-6.6,-2)--(-8.4,-2);
		\draw[<-] (-8.6,-0.3)--(-6.5,-1.6);
		\node at (-7.7,-1) [anchor=east]{$\Lambda\circ\mathcal{R}$};
		\node at (-9,0) {$L_\flat^2(\Xi)$};
		\node at (-6,0) {$\mathcal{E}\cap L_\flat^2(\Xi)$};
		\draw[->] (-6.9,0)--(-8.4,0);
		\node at (-7.5,0) [anchor=south]{$\Lambda$};
		\draw[right hook->] (-5.1,0)--(-3.3,0);
		\draw[<-] (-3.4,-3.6)--(-5.5,-2.3);
		\node at (-5,-2.8) [anchor=east]{$\mathcal{H}$};
		\draw[->] (-5.4,-2)--(-3.3,-2);
		\node at (-4.5,-2) [anchor=south]{$\mathcal{A}_o$};
		\draw[->] (0.6,-0.2)..controls(1.3,-1)and(1.3,-3)..(0.9,-3.7);
		\draw[dashed] (0.8,-3.15)--(-10.1,-3.15);
		\node at (-10,-3.10) [anchor=south west] {time};
		\node at (-10,-3.15) [anchor=north west] {frequency};
		\end{tikzpicture}
	\end{center}
	\caption{Spaces and operators that come into play in our construction.}\label{fig:diagram}
\end{figure}

We put
\[\mathcal{D}_o=\{\varphi\in L_o^2(K/M\times A):(I\otimes\mathcal{F})\varphi\in L_{o,\emph{\textbf{c}}}^2(K/M\times\mathfrak{a}^*)\}\]
and we define the operator $\mathcal{J}_o\colon\mathcal{D}_o\subseteq L_o^2(K/M\times A)\rightarrow L_o^2(K/M\times A)$ as the Fourier multiplier
\[(I\otimes\mathcal{F})(\mathcal{J}_o\varphi)(kM,\lambda)=\frac{1}{\sqrt{w}\left|\textbf{c}(\lambda)\right|}(I\otimes\mathcal{F})\varphi(kM,\lambda),\quad \text{a.e.}\, (kM,\lambda)\in K/M\times\mathfrak{a}^*. \]
We define the set of functions 
\[
\mathcal{E}=\{F\in L^2(\Xi):\Phi_oF\in L_{o,\emph{\textbf{c}}}^2(K/M\times\mathfrak{a}^*) \}
\] 
and we consider the operator $\Lambda\colon\mathcal{E}\subseteq L^2(\Xi)\rightarrow L^2(\Xi)$ given by
\[
\Lambda F={\Psi_o^*}^{-1}\mathcal{J}_o\Psi_o^*F.
\]

As a direct consequence of the definition of $\Lambda$ and $\mathcal{J}_o$, for every $F\in\mathcal{E}$ and for almost every $( kM,\lambda)\in  K/M\times\mathfrak{a}^*$ we have (see the rightmost block in  Fig.~\ref{fig:diagram})
\begin{align}
\label{eq:philambda}
\nonumber\Phi_o(\Lambda F)( kM,\lambda)&=(I\otimes\mathcal{F})(\mathcal{J}_o\Psi_{o}^*F)( kM,\lambda)\\
\nonumber&=\frac{1}{\sqrt{w}\left|\textbf{c}(\lambda)\right|}(I\otimes\mathcal{F})(\Psi_{o}^*F)( kM,\lambda)\\
&=\frac{1}{\sqrt{w}\left|\textbf{c}(\lambda)\right|}\Phi_oF( kM,\lambda).
\end{align}
\par
The operator $\Lambda$ intertwines the regular representation $\hat{\pi}$ as shown by the next proposition.

\begin{proposition}\label{interhat}
	The subspace $\mathcal{E}$ is $\hat{\pi}$-invariant and for all $F\in\mathcal{E}$ and $g\in G$ 
	\begin{equation}\label{eq:intertwininglambda}
	\hat{\pi} (g)\Lambda F=\Lambda\hat{\pi}(g)F.
	\end{equation}
\end{proposition}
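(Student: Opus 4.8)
The plan is to transport the whole statement through the isometry $\Phi_o$ of Proposition~\ref{prop:fundamentaloperator}. Under $\Phi_o$ the operator $\Lambda$ becomes, by \eqref{eq:philambda}, multiplication by the scalar function $m(\lambda):=1/(\sqrt{w}\,|\mathbf{c}(\lambda)|)$, which depends on $\lambda$ alone. I will show that, after conjugation by $\Phi_o$, the action $\hat{\pi}(g)$ acts only on the boundary variable $kM$ (together with a pointwise scalar factor) and leaves the spectral variable $\lambda$ untouched. Multiplication by $m(\lambda)$ then commutes with it for the trivial reason that the value of $\lambda$ at which $m$ is evaluated is never changed. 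Since $\Phi_o$ is an isometry, hence injective, into $L_o^2(K/M\times\mathfrak{a}^*)$, every identity may be checked on the $\Phi_o$-side.

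The one genuinely computational step, which I carry out first, is the following \emph{transport formula}: for every $H\in L^2(\Xi)$,
\[
\Phi_o(\hat{\pi}(g)H)(kM,\lambda)=e^{(\rho-i\lambda)(A_o(g[o],kM))}\,\Phi_oH(g^{-1}\langle kM\rangle,\lambda).
\]
To obtain it, I apply the master relation \eqref{eq:foundamentalrelation} with the reference point $x=g[o]$ to the function $\hat{\pi}(g)H$, which produces the cocycle $e^{(\rho-i\lambda)(A_o(g[o],kM))}$ in front of $(I\otimes\mathcal{F})\Psi_{g[o]}^*(\hat{\pi}(g)H)$. The inner pull-back is computed from the parametrized action \eqref{eq:actiongxiparametrizedfunction} with $x=g[o]$: since $g^{-1}[g[o]]=o$ and $\Delta^{-\frac12}(kM,a)=e^{\rho(\log a)}$ does not depend on the boundary variable, one gets $\Psi_{g[o]}^*(\hat{\pi}(g)H)(kM,a)=\Psi_o^*H(g^{-1}\langle kM\rangle,a)$, and applying the Fourier transform in $a$ (which does not see the relabeling $kM\mapsto g^{-1}\langle kM\rangle$) yields $\Phi_oH(g^{-1}\langle kM\rangle,\lambda)$. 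The decisive structural feature is that $\lambda$ appears in the formula only as an inert parameter.

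With the transport formula in hand, both assertions follow from a single chain. Applying it to $H=\Lambda F$ and using \eqref{eq:philambda} in the form $\Phi_o(\Lambda F)=m(\lambda)\Phi_oF$, one finds
\[
\Phi_o(\hat{\pi}(g)\Lambda F)(kM,\lambda)=e^{(\rho-i\lambda)(A_o(g[o],kM))}\,m(\lambda)\,\Phi_oF(g^{-1}\langle kM\rangle,\lambda)=m(\lambda)\,\Phi_o(\hat{\pi}(g)F)(kM,\lambda),
\]
where the last equality is the transport formula applied to $H=F$, together with the fact that $m$ is read off at the unchanged $\lambda$. Now for $F\in\mathcal{E}$ the left-hand side lies in $L_o^2(K/M\times\mathfrak{a}^*)$ because $\hat{\pi}(g)\Lambda F\in L^2(\Xi)$ and $\Phi_o$ maps into $L_o^2$; hence $m\cdot\Phi_o(\hat{\pi}(g)F)\in L_o^2$, i.e. $\Phi_o(\hat{\pi}(g)F)\in L_{o,\mathbf{c}}^2(K/M\times\mathfrak{a}^*)$, which is exactly $\hat{\pi}(g)F\in\mathcal{E}$; this gives the $\hat{\pi}$-invariance of $\mathcal{E}$. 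Knowing $\hat{\pi}(g)F\in\mathcal{E}$, relation \eqref{eq:philambda} applied to $\hat{\pi}(g)F$ gives $\Phi_o(\Lambda\hat{\pi}(g)F)=m(\lambda)\Phi_o(\hat{\pi}(g)F)=\Phi_o(\hat{\pi}(g)\Lambda F)$, and injectivity of $\Phi_o$ yields \eqref{eq:intertwininglambda}.

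The main obstacle is entirely concentrated in the transport formula: one must correctly combine the change-of-reference-point identity \eqref{eq:foundamentalrelation} with the explicit parametrized action \eqref{eq:actiongxiparametrizedfunction} and verify that the resulting cocycle is a pointwise scalar in $(kM,\lambda)$ while the $G$-action relabels only $kM$. Once this is established, the commutation with the $\lambda$-multiplier $m$, and hence both the invariance of $\mathcal{E}$ and the intertwining, are purely formal consequences.
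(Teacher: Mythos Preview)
Your proof is correct and follows the same overall strategy as the paper: derive the transport formula \[\Phi_o(\hat{\pi}(g)H)(kM,\lambda)=e^{(\rho-i\lambda)(A_o(g[o],kM))}\,\Phi_oH(g^{-1}\langle kM\rangle,\lambda),\] note that the $\lambda$-variable is inert, and combine with \eqref{eq:philambda}. Your cocycle $A_o(g[o],kM)$ is the same as the paper's $A_{g^{-1}[o]}(o,g^{-1}\langle kM\rangle)$ by \eqref{ginvA}; you simply chose the reference point $x=g[o]$ in \eqref{eq:foundamentalrelation} and \eqref{eq:actiongxiparametrizedfunction}, whereas the paper uses $x=o$ and $x=g^{-1}[o]$ respectively.

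The one genuine difference is in the proof of $\hat{\pi}$-invariance of $\mathcal{E}$. The paper computes the $L^2_{o,\mathbf{c}}$-norm of $\Phi_o(\hat{\pi}(g)F)$ directly via the measure-transformation identities \eqref{ginv} and \eqref{eq:radonik}, showing it equals the $L^2_{o,\mathbf{c}}$-norm of $\Phi_oF$. You instead observe that $\Lambda F\in L^2(\Xi)$ forces $\hat{\pi}(g)\Lambda F\in L^2(\Xi)$ by unitarity, hence $\Phi_o(\hat{\pi}(g)\Lambda F)\in L_o^2$; since this equals $m(\lambda)\Phi_o(\hat{\pi}(g)F)$, membership in $L_{o,\mathbf{c}}^2$ is automatic. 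Your route is shorter and avoids the measure computation, at the cost of not yielding the explicit norm identity; the paper's route gives the extra information that $\|\Phi_o(\hat{\pi}(g)F)\|_{L^2_{o,\mathbf{c}}}=\|\Phi_oF\|_{L^2_{o,\mathbf{c}}}$, which is not needed here.
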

\begin{proof}
	We consider $F\in\mathcal{E}$, $g\in G$ and we prove that $\hat{\pi}(g)F\in\mathcal{E}$. 
	By \eqref{eq:actiongxiparametrizedfunction}
	\[
	\hat{\pi} (g)F\circ\Psi_o( kM,a)=F\circ\Psi_{g^{-1}[o]}(g^{-1}\langle kM\rangle,a)
	\]
	for almost every $( kM,a)\in K/M\times A$. Therefore, we have 
	\[
	\Psi_{o}^*(\hat\pi(g)F)( kM, a)=\Psi_{g^{-1}[o]}^*F(g^{-1}\langle kM\rangle,a)
	\]
	and consequently by equation~\eqref{eq:foundamentalrelation}
	\begin{align}\label{eq:expressionpihat}
	\nonumber\Phi_o(\hat\pi(g)F)( kM, \lambda)&=(I\otimes\cF_A)(\Psi_{g^{-1}[o]}^*F)(g^{-1}\langle kM\rangle,\lambda)\\
	&=e^{(\rho-i\lambda)(A_{g^{-1}[o]}(o,g^{-1}\langle kM\rangle))}\Phi_o(F)( g^{-1}\langle kM\rangle, \lambda)
	\end{align}
	for almost every $( kM,\lambda)\in K/M\times \mathfrak{a}^*$.
	By  equations~\eqref{eq:expressionpihat}, \eqref{ginv} and \eqref{eq:radonik}
	\begin{align*}
	&\int_{ K/M\times\mathfrak{a}^*}|\Phi_o(\hat{\pi}(g)F)( kM,\lambda)|^2\frac{\de\nu^{o}( kM)\de \lambda}{w|\textbf{c}(\lambda)|^2}\\
	&=\int_{\mathfrak{a}^*}\int_{ K/M}|\Phi_o(F)( g^{-1}\langle kM\rangle,\lambda)|^2e^{2\rho(A_{g^{-1}[o]}(o,g^{-1}\langle kM\rangle))}\frac{\de\nu^{o}( kM)\de \lambda}{w|\textbf{c}(\lambda)|^2}\\
	&=\int_{ K/M\times\mathfrak{a}^*}|\Phi_{o}F( kM,\lambda)|^2e^{2\rho(A_{g^{-1}[o]}(o,kM))}\frac{\de\nu^{g^{-1}[o]}(kM)\de \lambda}{w|\textbf{c}(\lambda)|^2}\\
		&=\int_{ K/M\times\mathfrak{a}^*}|\Phi_{o}F( kM,\lambda)|^2\frac{\de\nu^{o}( kM)\de \lambda}{w|\textbf{c}(\lambda)|^2}<+\infty
	\end{align*}
	and we conclude that $\hat{\pi}(g)F\in\mathcal{E}$. We next prove the intertwining property~\eqref{eq:intertwininglambda}. We have already observed that, by Proposition~\ref{prop:fundamentaloperator}, it is enough to prove that 
	\begin{align*}
	\Phi_o(\hat{\pi} (g)\Lambda F)=\Phi_o(\Lambda\hat{\pi}(g)F)
	\end{align*}
	for every $g\in G$ and $F\in\mathcal{E}$. By  equations~\eqref{eq:expressionpihat} and \eqref{eq:philambda},
	for almost every $(kM,\lambda)\in K/M\times\mathfrak{a}^*$, we have the chain of equalities 
	\begin{align*}
	\Phi_o(\hat{\pi} (g)\Lambda F)(kM,\lambda)
	&=e^{(\rho-i\lambda)(A_{g^{-1}[o]}(o,g^{-1}\langle kM\rangle))}\Phi_o(\Lambda F)( g^{-1}\langle kM\rangle, \lambda)\\	
	&=\frac{1}{\sqrt{w}\left|\textbf{c}(\lambda)\right|}e^{(\rho-i\lambda)(A_{g^{-1}[o]}(o,g^{-1}\langle kM\rangle))}\Phi_o(F)( g^{-1}\langle kM\rangle, \lambda)\\
	&=\frac{1}{\sqrt{w}\left|\textbf{c}(\lambda)\right|}\Phi_{o}(\hat\pi(g)F)(kM,\lambda)
	=\Phi_o(\Lambda\hat{\pi}(g)F)(kM,\lambda),
	\end{align*}
	which proves the intertwining relation. 
\end{proof}
The next result follows directly by Proposition~\ref{prop:fundamentaloperator} and equation~\eqref{eq:philambda}.
\begin{corollary}\label{cor:lambda}
	For every $F\in \mathcal{E}$, $\Lambda F\in L^2_{\flat}(\Xi)$ if and only if $F\in L^2_{\flat}(\Xi)$.
\end{corollary}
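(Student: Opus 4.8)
The plan is to transport the statement through the isometry $\Phi_o$ and reduce it to Proposition~\ref{prop:fundamentaloperator}, exploiting that the relation \eqref{eq:philambda} converts the operator $\Lambda$ into multiplication by the scalar Fourier multiplier $1/(\sqrt{w}\,|\mathbf{c}(\lambda)|)$. Since this multiplier depends on $\lambda$ alone and is well understood, the entire argument should be a two-line bookkeeping once the right invariance property of the multiplier is isolated.

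First I would invoke Proposition~\ref{prop:fundamentaloperator} twice: applied to $F$ it gives that $F\in L^2_\flat(\Xi)$ if and only if $\Phi_o F$ satisfies Property~$\sharp$, and applied to $\Lambda F\in L^2(\Xi)$ it gives that $\Lambda F\in L^2_\flat(\Xi)$ if and only if $\Phi_o(\Lambda F)$ satisfies Property~$\sharp$. Since $F\in\mathcal{E}$, equation~\eqref{eq:philambda} yields $\Phi_o(\Lambda F)(kM,\lambda)=\frac{1}{\sqrt{w}\,|\mathbf{c}(\lambda)|}\,\Phi_o F(kM,\lambda)$, so the function of $\lambda$ appearing in Property~$\sharp$ for $\Lambda F$ is obtained from the one for $F$ simply by multiplying by $\frac{1}{\sqrt{w}\,|\mathbf{c}(\lambda)|}$, which factors out of the integration over $K/M$ because it is independent of $kM$.

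The crux is then the elementary observation that $\lambda\mapsto \frac{1}{\sqrt{w}\,|\mathbf{c}(\lambda)|}$ is $W$-invariant and almost everywhere nonzero: the $W$-invariance follows from that of $|\mathbf{c}(\lambda)|^2$ (already used after~\eqref{rangeh}), and the meromorphy of $\mathbf{c}$ guarantees that it does not vanish on a set of positive measure. Multiplication by such a factor preserves $W$-invariance in both directions: if the integrand attached to $F$ is $W$-invariant a.e.\ then so is its product with this factor, and conversely one recovers the integrand for $F$ upon multiplying by the $W$-invariant function $\sqrt{w}\,|\mathbf{c}(\lambda)|$. Hence $\Phi_o(\Lambda F)$ satisfies Property~$\sharp$ exactly when $\Phi_o F$ does, and combining this with the two applications of Proposition~\ref{prop:fundamentaloperator} delivers the claimed equivalence. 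I expect no serious obstacle; the only point deserving care is precisely the $W$-invariance and a.e.\ non-vanishing of the multiplier, since it is this feature that makes the equivalence run in both directions rather than yielding a single implication.
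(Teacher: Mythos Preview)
Your proposal is correct and follows essentially the same route as the paper's proof: both reduce the statement to Proposition~\ref{prop:fundamentaloperator} via equation~\eqref{eq:philambda}, and both conclude by noting that the multiplier $1/(\sqrt{w}\,|\mathbf{c}(\lambda)|)$ is $W$-invariant. Your version is in fact slightly more explicit in recording the a.e.\ non-vanishing of the multiplier, which is needed for the reverse implication but is left tacit in the paper.
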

\begin{proof}
	By Proposition~\ref{prop:fundamentaloperator}, $\Lambda F\in L^2_{\flat}(\Xi)$ if and only if $\Phi_o(\Lambda F)$ satisfies~Property $\sharp$. 
	By \eqref{eq:philambda} and since $\lambda\mapsto \left|\textbf{c}(\lambda)\right|$ is $W$-invariant, $\Phi_o(\Lambda F)$ satisfies~Property $\sharp$ if and only if $\Phi_o(F)$ satisfies~Property $\sharp$, which is equivalent 
	to $F\in L^2_\flat(\Xi)$. This concludes the proof. 
\end{proof}

\noindent
We are now in a position to prove our main result.
\begin{theorem}\label{thm:unitarizationtheorem}
	The composite operator $\Lambda\mathcal{R}$ extends to a unitary operator
	\[\mathcal{Q}\colon L^2(X)\longrightarrow  L_\flat^2(\Xi)\]
	 which intertwines the representations $\pi$ and $\hat{\pi}$, i.e.
	\begin{equation}\label{intertw}
	\hat{\pi}(g)\mathcal{Q}=\mathcal{Q}\pi(g),\hspace{8mm}g\in G.
	\end{equation}
\end{theorem}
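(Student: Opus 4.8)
The plan is to establish that $\Lambda\mathcal{R}$ is isometric on the dense subspace $\mathcal{D}(X)$ and then to identify its closure with a composition of three unitary maps whose middle factor is multiplication by $1/(\sqrt{w}\,|\mathbf{c}(\lambda)|)$.

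First I would check that $\Lambda\mathcal{R}$ is well defined and isometric on $\mathcal{D}(X)$. For $f\in\mathcal{D}(X)$, Corollary~\ref{cor:radonbemolle} gives $\mathcal{R}f\in L^2_\flat(\Xi)$ and $\Phi_o(\mathcal{R}f)=\mathcal{H}f$; since the Plancherel formula~\eqref{eq:plancherelformula} shows $\mathcal{H}f\in L_{o,\mathbf{c}}^2(K/M\times\mathfrak{a}^*)$, we have $\mathcal{R}f\in\mathcal{E}$ and $\Lambda\mathcal{R}f$ is defined. Using~\eqref{eq:philambda} and the isometry property of $\Phi_o$ (Proposition~\ref{prop:fundamentaloperator}),
\begin{equation*}
\|\Lambda\mathcal{R}f\|_{L^2(\Xi)}^2=\int_{K/M\times\mathfrak{a}^*}\frac{|\mathcal{H}f(kM,\lambda)|^2}{w\,|\mathbf{c}(\lambda)|^2}\de\nu^o(kM)\de\lambda=\|f\|_{L^2(X)}^2,
\end{equation*}
the last equality being~\eqref{eq:plancherelformula}. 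Hence $\Lambda\mathcal{R}$ extends by density to an isometry $\mathcal{Q}\colon L^2(X)\to L^2(\Xi)$.

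The crux is to prove that $\mathcal{Q}$ is onto $L^2_\flat(\Xi)$, and here I would factor $\mathcal{Q}$ through the Helgason--Fourier transform. Note that $\Phi_o=(I\otimes\mathcal{F})\Psi_o^*$ is in fact \emph{unitary} from $L^2(\Xi)$ onto $L_o^2(K/M\times\mathfrak{a}^*)$, being the composition of the unitary $\Psi_o^*$ with the unitary $I\otimes\mathcal{F}$ (Theorem~\ref{fouriertransformabeliangroup}); consequently, by Proposition~\ref{prop:fundamentaloperator}, $\Phi_o$ carries $L^2_\flat(\Xi)$ onto the subspace of $L_o^2(K/M\times\mathfrak{a}^*)$ consisting of functions with Property~$\sharp$. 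Let $T$ denote multiplication by $1/(\sqrt{w}\,|\mathbf{c}(\lambda)|)$; a direct change of weight shows $T$ is unitary from $L_{o,\mathbf{c}}^2(K/M\times\mathfrak{a}^*)$ onto $L_o^2(K/M\times\mathfrak{a}^*)$, and since $|\mathbf{c}|$ is $W$-invariant (as used in Corollary~\ref{cor:lambda}) and the multiplier is independent of $kM$, $T$ preserves Property~$\sharp$, hence restricts to a unitary from $L_{o,\mathbf{c}}^2(K/M\times\mathfrak{a}^*)^\sharp$ onto the Property-$\sharp$ subspace of $L_o^2(K/M\times\mathfrak{a}^*)$. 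On $\mathcal{D}(X)$, equation~\eqref{eq:philambda} together with Corollary~\ref{cor:radonbemolle} gives $\Phi_o\mathcal{Q}f=T\mathcal{H}f=T\mathscr{H}f$, and since both $\Phi_o\mathcal{Q}$ and $T\mathscr{H}$ are bounded and agree on the dense set $\mathcal{D}(X)$, I would conclude $\mathcal{Q}=\Phi_o^{-1}T\mathscr{H}$.

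With this factorization the result is immediate: $\mathscr{H}$ is unitary onto $L_{o,\mathbf{c}}^2(K/M\times\mathfrak{a}^*)^\sharp$ (Theorem~\ref{extH2}), $T$ is unitary onto the Property-$\sharp$ subspace of $L_o^2(K/M\times\mathfrak{a}^*)$, and $\Phi_o^{-1}$ is unitary from that subspace onto $L^2_\flat(\Xi)$, so $\mathcal{Q}$ is a composition of unitaries and hence a unitary of $L^2(X)$ onto $L^2_\flat(\Xi)$. Finally, for the intertwining I would use that $\pi(g)f\in\mathcal{D}(X)$ whenever $f\in\mathcal{D}(X)$, so Proposition~\ref{interradon} and Proposition~\ref{interhat} yield $\Lambda\mathcal{R}\pi(g)f=\Lambda\hat{\pi}(g)\mathcal{R}f=\hat{\pi}(g)\Lambda\mathcal{R}f$, that is $\mathcal{Q}\pi(g)=\hat{\pi}(g)\mathcal{Q}$ on $\mathcal{D}(X)$; boundedness of $\mathcal{Q}$ and unitarity of $\pi(g),\hat{\pi}(g)$ then extend~\eqref{intertw} to all of $L^2(X)$. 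The only genuinely delicate point is the surjectivity onto $L^2_\flat(\Xi)$, i.e.\ matching Property~$\flat$ with Property~$\sharp$ through $\Phi_o$ and checking that the multiplier $T$ respects this correspondence.
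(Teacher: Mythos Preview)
Your proof is correct and the isometry and intertwining parts coincide with the paper's argument. The genuine difference is in how surjectivity onto $L^2_\flat(\Xi)$ is obtained. The paper argues by orthogonality: it takes $F\in L^2_\flat(\Xi)$ orthogonal to $\Lambda\mathcal{R}(\mathcal{D}(X))$, rewrites $\langle F,\Lambda\mathcal{R}f\rangle_{L^2(\Xi)}$ as an $L^2_{o,\mathbf{c}}$ pairing of $\Theta F:=\sqrt{w}\,|\mathbf{c}(\lambda)|\,\Phi_oF$ against $\mathcal{H}f$, checks that $\Theta F\in L^2_{o,\mathbf{c}}(K/M\times\mathfrak{a}^*)^\sharp$, and then invokes the density of $\mathcal{H}(\mathcal{D}(X))$ in that space to conclude $F=0$. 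You instead exhibit $\mathcal{Q}$ directly as the composition $\Phi_o^{-1}\,T\,\mathscr{H}$ of three unitaries, which makes surjectivity automatic. Your route is cleaner and more structural; it does require the extra observation (not stated in Proposition~\ref{prop:fundamentaloperator} but immediate from the unitarity of $\Psi_o^*$ and of $I\otimes\mathcal{F}$) that $\Phi_o$ is onto $L^2_o(K/M\times\mathfrak{a}^*)$, and the check that the $W$-invariant multiplier $T$ is a bijection between the Property-$\sharp$ subspaces. The paper's orthogonality argument avoids naming $\Phi_o^{-1}$ and works with the isometry statement alone, at the cost of a slightly longer computation. Both approaches rest on exactly the same ingredients: Theorem~\ref{extH2}, the $\sharp/\flat$ correspondence of Proposition~\ref{prop:fundamentaloperator}, and the $W$-invariance of $|\mathbf{c}|$.
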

Theorem~\ref{thm:unitarizationtheorem} implies that $\pi$ and the restriction $\hat{\pi}|_{L_\flat^2(\Xi)}$ of $\hat{\pi}$ to $L_\flat^2(\Xi)$ are unitarily equivalent representations. Moreover, $\hat{\pi}|_{L_\flat^2(\Xi)}$ (and then $\hat{\pi}$) is not irreducible, too. 
\begin{proof}
	We first show that $\Lambda\mathcal{R}$ extends to a unitary operator $\mathcal{Q}$ from $L^2(X)$ onto $L^2(\Xi)$. It might be useful to keep in mind see the leftmost block in  Fig.~\ref{fig:diagram}. Let $f\in \mathcal{D}(X)$, by the Fourier Slice Theorem \eqref{eq:fst}, the Plancherel formula and the definition of $\mathcal{J}_o$ and $\Lambda$, we have that
	\begin{align*}
	\|f\|_{ L^2(X)}^2&=\|\mathcal{H} f\|_{L_{o,\emph{\textbf{c}}}^2(K/M\times\mathfrak{a}^*)^\sharp}^2\\
	&=\|(I\otimes\mathcal{F})(\Psi_{o}^*(\mathcal{R}f))\|_{L_{o,\emph{\textbf{c}}}^2(K/M\times\mathfrak{a}^*)^\sharp}^2\\
	&=\int_{K/M\times\mathfrak{a}^*}|(I\otimes\mathcal{F})(\mathcal{J}_o\Psi_{o}^*(\mathcal{R}f))(kM,\lambda)|^2\de\nu^{o}(kM)\de \lambda\\
	&=\int_{K/M\times\mathfrak{a}^*}|(I\otimes\mathcal{F})(\Psi_{o}^*(\Lambda\mathcal{R}f))(kM,\lambda)|^2\de\nu^{o}(kM)\de \lambda\\
	&=\int_{K/M\times A}|\Psi_{o}^*(\Lambda\mathcal{R}f)(kM,a)|^2\de\nu^{o}(kM)\de a\\
	&=\|\Lambda\mathcal{R}f\|_{L^2(\Xi)}^2.
	\end{align*}
	Hence, $\Lambda\mathcal{R}$ is an isometric operator from $\mathcal{D}(X)$ into $L^2(\Xi)$. Since $\mathcal{D}(X)$ is dense in $\Le^2(X)$, $\Lambda\mathcal{R}$ extends to a unique isometry from $\Le^2(X)$ onto the closure of $\mathrm{Ran}(\Lambda\mathcal{R})$ in $L^2(\Xi)$. We must show that $\Lambda\mathcal{R}$ has dense image in $L_\flat^2(\Xi)$. The inclusion $\textrm{Ran}(\Lambda\mathcal{R})\subseteq L_\flat^2(\Xi)$ follows immediately from Corollary~\ref{cor:radonbemolle} and Corollary~\ref{cor:lambda}.
	Let $F\in L_\flat^2(\Xi)$ be such that $\langle F,\Lambda\mathcal{R}f\rangle_{L^2(\Xi)}=0$ for every $\mathcal{D}(X)$. By the Plancherel formula and the Fourier Slice Theorem \eqref{eq:fst} we have that
	\begin{align*}
	0&=\langle F,\Lambda\mathcal{R}f\rangle_{L^2(\Xi)}\\
	&=\int_{K/M\times A}(F\circ\Psi_{o})(kM, a)\overline{(\Lambda\mathcal{R}f\circ\Psi_{o})(kM, a)}e^{2\rho(\log a)}\de\nu^{o}(kM)\de a\\
	&=\int_{K/M\times A}(\Psi_{o}^*F)(kM, a)\overline{(\mathcal{J}_o\Psi_{o}^*(\mathcal{R}f))(kM, a)}\de\nu^{o}(kM)\de a\\
	&=\int_{K/M\times\mathfrak{a}^*}\Phi_o(F)(kM, \lambda)\overline{(I\otimes\mathcal{F})(\mathcal{J}_o\Psi_{o}^*(\mathcal{R}f))(kM, \lambda)}\de\nu^{o}(kM)\de \lambda\\
	&=\int_{K/M\times\mathfrak{a}^*}\Phi_o(F)(kM, \lambda)\overline{(I\otimes\mathcal{F})(\Psi_{o}^*(\mathcal{R}f))(kM, \lambda)}\frac{\de\nu^{o}(kM)\de \lambda}{\sqrt{w}|\textbf{c}(\lambda)|}\\
	&=\int_{K/M\times\mathfrak{a}^*}\sqrt{w}|\textbf{c}(\lambda)|\Phi_o(F)(kM,\lambda)\overline{\mathcal{H}_{o}f(kM, \lambda)}\frac{\de\nu^{o}(kM)\de \lambda}{w|\textbf{c}(\lambda)|^2}.
	\end{align*}
	For simplicity, we denote by $\Theta F$ the function on $K/M\times\mathfrak{a}^*$ defined as 
	\[
	\Theta F(kM,\lambda)=\sqrt{w}|\textbf{c}(\lambda)|\Phi_o(F)(kM,\lambda),\qquad \text{a.e.}\, (kM,\lambda)\in K/M\times\mathfrak{a}^*.
	\]
	Hence we have proved that $\langle\Theta F, \mathcal{H} f\rangle=0$
	for every $f\in \mathcal{D}(X)$. The next two facts follow immediately by Proposition~\ref{prop:fundamentaloperator}. Since $\Phi_o$ is an isometry from $L^2(\Xi)$ into $L_o^2\left(K/M\times\mathfrak{a}^*\right)$, the function $\Theta F$ belongs to $L_{o,\emph{\textbf{c}}}^2(K/M\times\mathfrak{a}^*)$. Further, since $F\in L_\flat^2(\Xi)$ and since $\lambda\mapsto \left|\textbf{c}(\lambda)\right|$ is $W$-invariant, then $\Theta F	\in \Le_{o,\emph{\textbf{c}}}^2(K/M\times\mathfrak{a}^*)^\sharp$.
	By Theorem \ref{extH}, $\mathcal{H}(\mathcal{D}(X))$ is dense in $\Le_{o,\emph{\textbf{c}}}^2(K/M\times\mathfrak{a}^*)^\sharp$. Hence, $\Theta F=0$ in $\Le_{o,\emph{\textbf{c}}}^2(K/M\times\mathfrak{a}^*)^\sharp$ 
	and then $\Phi_o(F)=0$ in $L_o^2\left(K/M\times\mathfrak{a}^*\right)$. Since $\Phi_o$ is an isometry from $L^2(\Xi)$ into $L_o^2\left(K/M\times\mathfrak{a}^*\right)$, then $F=0$ in $L^2(\Xi)$. 
	Therefore, $\overline{\mathrm{Ran}(\Lambda\mathcal{R})}=L_\flat^2(\Xi)$ and $\Lambda\mathcal{R}$ extends uniquely to a surjective isometry
	$$\mathcal{Q}\colon \Le^2(X)\longrightarrow L_\flat^2(\Xi).$$
	Observe that $\mathcal{Q}f=\Lambda\mathcal{R}f$ for every $f\in \mathcal{D}(X)$.
	The intertwining property \eqref{intertw} follows immediately from Proposition~\ref{interradon} and Proposition~\ref{interhat}.
\end{proof}



\end{document}